\documentclass[12pt]{amsart}
\usepackage{amsmath,amssymb}
\usepackage{color}
\usepackage{amsmath,amssymb}
\usepackage[top=2cm, bottom=2cm, left=2cm, right=2cm]{geometry}
\usepackage{amsthm}
\usepackage{type1cm}
\usepackage{bm}
\usepackage{eucal}
\usepackage{amscd} 
\usepackage{graphicx,xcolor}
\usepackage{threeparttable}
\allowdisplaybreaks
\title[Classification of SNC-algebras in dimension five]
{Classification of SNC-algebras in dimension five}
\author{Haruka Sugai}
\address{Department of Mathematics, Faculty of Science and Technology,
Tokyo University of Science,
Noda, Chiba, 278-8510 Japan}
\email{6125702@ed.tus.ac.jp}

\date{}
\theoremstyle{plain} 

\newtheorem{thm}{Theorem}[section]
\newtheorem{lem}[thm]{Lemma}
\newtheorem{defi}[thm]{Definition}
\newtheorem{prop}[thm]{Proposition}
\newtheorem{itr}[thm]{Example}
\newtheorem{remk}[thm]{Remark}

\newcommand{\relmiddle}[1]{\mathrel{}\middle#1\mathrel{}}
\thanks{
\hspace{-5mm}{\it $2020$ Mathematics Subject Classification.} 53C30.\\
{\it Key words and phrases.} Homogeneous manifold, Lie algebra, negative curvature, Ricci curvature.}

\begin{document}
\maketitle
\begin{quote}
{\sc Abstract.}
 Every homogeneous manifold of negative curvature is
known to be isometric to a Lie group with a left invariant metric. We define an
SNC-algebra to be a Lie algebra which admits an inner product of strictly negative
curvature. In the author's joint paper in 2022, we classified SNC-algebras in dimension four. In this article, we classify SNC-algebras in dimension five, as well as we calculate Ricci curvature of SNC-algebras in dimension four.
\end{quote}
\section{Introduction}
The subject of this paper is the classification of connected homogeneous manifolds with negative curvature. 
Wolf\,\cite{w} proved that a connected homogeneous manifold with non-positive curvature admits a transitive solvable group of isometries, that is, admits a transitive action of a connected closed solvable subgroup of the group of isometries. In addition, it is known that connected homogeneous manifolds with negative curvature are simply connected by Kobayashi\,\cite{k}.  When studying connected homogeneous manifolds with negative curvature, it is sufficient to examine solvable Lie groups are Lie groups endowed with left invariant metric. Since a left invariant metric on a Lie group $G$ is determined by an inner product on its Lie algebra $\mathfrak{g}$, we can consider the curvature for  the Lie algebra with an inner product. We say that $\mathfrak{g}$ has an inner product of strictly negative curvature if the corresponding homogeneous manifold has a negative curvature. We defined a special class of Lie algebras in \cite{gs}. A Lie algebra $\mathfrak{g}$ is called an SNC-algebra, that is, a Lie algebra of strictly negative curvature type, provided that $\mathfrak{g}$ has an inner product of strictly negative curvature.
Furthermore, according to Heintze\,\cite{h}, in classifying $n$-dimensional SNC-algebras, it is sufficient to consider $(n-1)$-dimensional nilpotent Lie algebras and all the derivations associated with these Lie algebras.
In addition, the structure of nilpotent Lie algebras are known by Magnin\,\cite{mag} when the dimension is less than seven. Therefore, we will consider the derivations associated with each nilpotent Lie algebra.

 Furthermore, according to Heintze\,\cite{h}, the condition that a  symmetric space with negative curvature must satisfy are indicated. Since a symmetric space with negative curvature is also an SNC-algebra, we determine the SNC-algebras which admit structures of symmetric spaces. We note that among such SNC-algebras there exist SNC-algebras which are not isomorphic to each other as Lie algebras but their curvatures with respective to the uniquely determined orthnormal basis coincide.

The present paper is organized as follows. In Section 2, we introduce the results of Heintze\cite{h} and define an SNC-algebra. In Section 3, we introduce the results of classification in SNC-algebras of dimension four conducted in \cite{gs}. In Section 4, we classify five-dimensional SNC-algebras. In Section 5, we calculate Ricci curvature tensor of SNC-algebras in dimension four and determine Einstein cases. In Section 6, we analyze the details of  four and five-dimensional SNC-algebras which admit structures of symmetric spaces.

\setcounter{section}{1}
\section{Homogeneous manifolds with negative curvature}
Wolf \cite{w} proved that a connected homogeneous manifold with non-positive curvature admits a transitive solvable group of isometries. Furthermore, such a transitive solvable group is a Lie group endowed with a left invariant metric. From the above, Heintze showed the following theorem.

\begin{prop}[\cite{h} Proposition 1]
Let $M$ be a homogeneous manifold of non-positive curvature. Then $M$ is isometric to a connected solvable Lie group $G$ endowed with a left invariant metric.
\end{prop}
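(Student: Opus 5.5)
We derive the statement from Wolf's theorem \cite{w}, quoted just above: a connected homogeneous Riemannian manifold $M$ of non-positive curvature admits a transitive action of a connected closed solvable subgroup $S$ of its isometry group $I(M)$. The only remaining task is to pass from a transitive solvable group to one acting \emph{simply} transitively. Once we have a connected solvable Lie group $G\subset I(M)$ acting simply transitively, fix a base point $o\in M$. The orbit map $G\to M$, $g\mapsto g\cdot o$, is then a diffeomorphism, and pulling back the metric of $M$ gives a metric on $G$ that is left invariant, because $G$ acts on $M$ by isometries. This yields the isometry claimed.

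\textbf{Step 1.} First we record the topology. By the Cartan--Hadamard theorem, since $M$ is complete (being homogeneous), its universal cover is diffeomorphic to $\mathbb{R}^n$. We would like $M$ itself to be simply connected. Kobayashi's result \cite{k} gives this in the strictly negative case. In the non-positive case we instead work on $M$ directly and use only the orbit structure below.

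\textbf{Step 2.} Next we cut down the isotropy. Write $M=S/H$ with $H=S_o$ the stabilizer of $o$, which is compact because it sits inside the compact isotropy group $I(M)_o$. A compact subgroup of a connected solvable Lie group is a torus, or in general has toral identity component. By the structure theory of solvable groups (Iwasawa/Mostow), $S$ decomposes as $S=T\cdot N$, where $T\supseteq H_0$ is a maximal compact torus and $N$ is a closed simply connected solvable subgroup. Choose $N$ normalized appropriately so that $S=H\cdot N$ and $N\cap H$ is discrete. Then $N$ acts transitively on $M$ with discrete stabilizer $\Gamma=N\cap H$. Since $N$ is simply connected and solvable, it is diffeomorphic to $\mathbb{R}^k$ with $k=\dim M$. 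Thus $M=N/\Gamma$, and $\Gamma$ is a torsion-free discrete subgroup because it is contained in a compact group and in $N$.

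\textbf{Step 3.} Finally we show that $\Gamma$ is trivial. $\Gamma$ is a discrete subgroup of the compact group $H$, hence finite. A finite subgroup of a simply connected solvable Lie group is trivial, since such groups have no nontrivial compact subgroups. Therefore $\Gamma=\{e\}$, so $N$ acts simply transitively and we may take $G=N$.

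The main obstacle is Step 2: choosing the complement $N$ to the compact part so that it acts transitively. The plan is to use the standard fact that a connected solvable Lie group is diffeomorphic to $T\times\mathbb{R}^m$, where $T$ is a maximal compact subgroup and all maximal compact subgroups are conjugate. We first conjugate so that $H_0\subset T$. We then take $N$ to be a simply connected closed subgroup complementary to $T$, and check transitivity by the dimension count $\dim N=\dim S-\dim T\ge\dim S-\dim H$ together with openness of orbits. If transitivity fails, we would instead take the modification used by Heintze \cite{h}: replace $S$ by the group generated by the nilradical and a complement to the isotropy in a Cartan subalgebra, and verify directly that it acts simply transitively.
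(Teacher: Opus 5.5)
Your argument breaks down where you set the topology aside in Step~1.

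\textbf{The conclusion of Steps~2--3 is false in general.} If those steps were right, they would give $M\cong N\cong\mathbb{R}^n$ for \emph{every} homogeneous $M$ with $K\le 0$. A flat torus $T^n$ is homogeneous with $K=0$, so this cannot hold. Concretely, take $S=T^n$ acting by translations. Then $H=\{e\}$, the maximal torus is $S$ itself, and there is no simply connected $N$ with $S=HN$.

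The dimension count you propose to rescue Step~2 is reversed. From $H_0\subseteq T$ you get $\dim N=\dim S-\dim T\le \dim S-\dim H=\dim M$. So a complement of $T$ can be transitive only if $H_0=T$, i.e.\ only if the isotropy is a maximal compact subgroup of $S$. That is a topological fact: it follows from the homotopy sequence of $H\to S\to M$ when $M$ is contractible, and it fails for non-simply-connected $M$.

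Worse, without simple connectivity a given transitive solvable $S$ need not contain any simply transitive connected subgroup. On the flat manifold $S^1\times\mathbb{R}^2$, the maps $(z,x)\mapsto(e^{it}z,\,R_{t/2}x+v)$ with $t\in\mathbb{R}$, $v\in\mathbb{R}^2$ form a closed, connected, solvable, $3$-dimensional group. It acts transitively with isotropy of order two: for $t=2\pi$, $v=0$ one gets $(z,x)\mapsto(z,-x)$. A simply transitive connected subgroup would be open, hence all of $S$, which is not simply transitive. Your fallback (nilradical plus a complement to the isotropy in a Cartan subalgebra) just returns this $S$, since here $\mathfrak{k}=0$. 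So you cannot work with Wolf's $S$ on $M$ directly.

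\textbf{The structural fact invoked in Step~2 is also false.} Iwasawa writes a connected solvable group as $T$ times a \emph{product} of one-parameter subgroups, not as $T\cdot N$ with $N$ a closed subgroup. For example, in the Heisenberg group modulo a central lattice, every $2$-dimensional subalgebra of $[X,Y]=Z$ contains $Z$. Hence no closed subgroup is complementary to the central circle. The paper gives no proof of this statement beyond citing Wolf and Heintze, and its remark that a transitive solvable group ``is a Lie group endowed with a left invariant metric'' skips exactly this step.

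\textbf{A correct route.}
\begin{enumerate}
\item[(i)] Use Wolf's result that deck transformations of a homogeneous $K\le 0$ manifold are Clifford translations. These translate the Euclidean de Rham factor and fix the rest. So $M$ is isometric to $F\times M'$, with $F$ a flat torus and $M'$ simply connected and homogeneous.
\item[(ii)] For $M'$, take a transitive connected solvable $S$ with isotropy algebra $\mathfrak{k}$ and nilradical $\mathfrak{n}$.
\item[(iii)] Elements of $\mathfrak{k}$ have semisimple $\mathrm{ad}$ (since $\mathrm{Ad}(H)$ is compact), and elements of $\mathfrak{n}$ have nilpotent $\mathrm{ad}$. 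Hence $\mathfrak{k}\cap\mathfrak{n}$ is central, and it is $0$ by effectiveness.
\item[(iv)] Since $[\mathfrak{s},\mathfrak{s}]\subseteq\mathfrak{n}$, every subspace $\mathfrak{s}'\supseteq\mathfrak{n}$ complementary to $\mathfrak{k}$ is an ideal.
\item[(v)] The connected subgroup $S'$ with Lie algebra $\mathfrak{s}'$ has all orbits open, because $\mathfrak{s}'$ is an ideal. So it is transitive with discrete isotropy.
\item[(vi)] Simple connectivity of $M'$ forces the covering $S'\to M'$ to be a diffeomorphism.
\item[(vii)] Then $G=F\times S'$ acts simply transitively by isometries on $M$.
\end{enumerate}
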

From this proposition, when considering the classification of connected homogeneous manifolds with negative curvature, it is sufficient to consider solvable Lie algebras with solvable Lie groups and left-invariant metrics.
\begin{defi}[\cite{gs} Definition 1.1]
A Lie algebra $\mathfrak{g}$ is called an SNC-algebra (a Lie algebra of strictly negative curvature type), provided that $\mathfrak{g}$ has an inner product of strictly negative curvature.
\end{defi}
Since a left invariant metric on a Lie group $G$ is determined by an inner product on its Lie algebra $\mathfrak{g}$, we can consider the curvature for  the Lie algebra with an inner product. We defined a special class of Lie algebras.
It is known that if a Lie algebra $\mathfrak{g}$ has an inner product of negative curvature, then $\mathfrak{g}$ is solvable. Therefore, SNC-algebras are solvable.
Next, we introduce the Heintze\cite{h} theorem, which has obtained the necessary and sufficient conditions for the  Lie algebra to become an SNC-algebra.

\begin{thm}[cf.\cite{h} Theorem 3]\label{thm:h3}
Let $\mathfrak{g}$ be a solvable Lie algebra. Then the following conditions are mutually equivalent. 
\begin{enumerate}
\item[(1)]$\mathfrak{g}$ is an SNC-algebra.
\item[(2)]$\mathfrak{g}'=[\mathfrak{g},\mathfrak{g}]$ stisfies the following two conditions.
\begin{enumerate}
\item[(2.1)]$\dim\mathfrak{g}'=\dim\mathfrak{g}-1$.
\item[(2.2)] There exists an element $A\in \mathfrak{g}$, which orthogonal to $\mathfrak{g}'$ with the property that all eigenvalues of the endomorphism $\mathrm{ad}A|\mathfrak{g}'$ of $\mathfrak{g}'$ have positive real parts.
\end{enumerate}
\end{enumerate}
\end{thm}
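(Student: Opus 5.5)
Both directions will go through the formula for sectional curvature of a left invariant metric. With $U(X,Y)$ defined by $2\langle U(X,Y),Z\rangle=\langle [Z,X],Y\rangle+\langle X,[Z,Y]\rangle$, Milnor's formula reads
\[
\langle R(X,Y)Y,X\rangle=-\tfrac34|[X,Y]|^2-\tfrac12\langle[X,[X,Y]],Y\rangle-\tfrac12\langle[Y,[Y,X]],X\rangle+|U(X,Y)|^2-\langle U(X,X),U(Y,Y)\rangle .
\]
This is Heintze's theorem, so the plan follows his argument.

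\emph{(2)$\Rightarrow$(1).} Put $\mathfrak n=\mathfrak g'$, which is nilpotent because $\mathfrak g$ is solvable. Then $\mathfrak g=\mathbb R A\oplus\mathfrak n$ and $D=\mathrm{ad}A|\mathfrak n$ has eigenvalues with positive real parts. The key point is that we are free to choose the inner product. First take an inner product on $\mathfrak n$ for which the symmetric part $D^s=\frac12(D+D^t)$ is positive definite. This is possible because $D$ is similar to a real Jordan form whose off-diagonal terms can be made arbitrarily small by rescaling. Next, rescale using the grading. Replace $D$ by $tA$ with $t$ large, which amounts to rescaling $A$, so that the $-\langle D^sX,X\rangle$ and $-|D^sX|^2$ type terms dominate. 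Concretely, we will use the inner product $\langle A,A\rangle=1$, $A\perp\mathfrak n$, and the inner product on $\mathfrak n$ above, with $A$ replaced by $\lambda A$ for $\lambda\gg1$.

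\emph{Computing $K$.} Write $X=aA+U$ and $Y=bA+V$ with $U,V\in\mathfrak n$; we may take $b=0$ after rotating in the plane. Milnor's formula then splits into:
\begin{itemize}
\item a term $-a^2\lambda^2\big(|D^sV|^2$-type quantity$\big)+a^2\lambda\langle[D^a\ldots]\rangle$, which is strictly negative of order $\lambda^2$ when $a\ne0$;
\item cross terms of order $\lambda$;
\item the intrinsic curvature of $(\mathfrak n,\langle,\rangle)$, of order $1$, together with $-\lambda\langle D^sU,U\rangle\langle D^sV,V\rangle$-type terms coming from the second fundamental form of the horospheres $\mathfrak n$.
\end{itemize}
Heintze's formula makes the last point precise. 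For $U,V\in\mathfrak n$ orthonormal,
\[
K(U,V)=K_{\mathfrak n}(U,V)-\lambda^2\big(\langle D^sU,U\rangle\langle D^sV,V\rangle-\langle D^sU,V\rangle^2\big),
\]
and the $\lambda^2$ term is negative definite on $2$-planes because $D^s>0$. Since the unit Grassmannian is compact, choosing $\lambda$ large makes $K<0$ everywhere.

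\emph{Main obstacle.} The mixed planes, with $a\ne0$ but small, must be handled uniformly. The relevant expression is a quadratic form in $(a,U,V)$ whose leading $\lambda^2$-part is negative definite in the appropriate sense. I would verify this by writing $K$ as a homogeneous polynomial of degree 2 in $\lambda$. Then I would check that the $\lambda^2$-coefficient is $-\big(a^2|D^sV|^2+\text{(the Gauss term)}\big)$, which is strictly negative on the compact set of orthonormal pairs. If this coefficient turns out not to be strictly negative on mixed planes, the fallback is Heintze's finer choice: make $D$ close to a multiple of the identity on a graded basis adapted to the lower central series, and scale $\mathfrak n$ by $\mathrm{diag}(\varepsilon^{k})$ on the $k$-th layer, which shrinks $K_{\mathfrak n}$.

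\emph{(1)$\Rightarrow$(2).} Assume $\langle,\rangle$ has $K<0$. Take $\mathfrak n$ to be the nilradical, or $\mathfrak g'$. Heintze's argument uses Milnor's formula with $X\perp\mathfrak g'$. If $\dim\mathfrak g'\le\dim\mathfrak g-2$, there are two orthonormal vectors $X,Y\perp\mathfrak g'$. For these, $[X,Y]\in\mathfrak g'$, and the formula gives $K(X,Y)\ge0$, essentially because $\mathrm{ad}X$ and $\mathrm{ad}Y$ commute modulo $\mathfrak g'$ and the positive $|U|^2$ terms dominate. This contradiction gives (2.1). For (2.2), let $A\perp\mathfrak g'$ be a unit vector. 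If $D=\mathrm{ad}A|\mathfrak g'$ had an eigenvalue with real part $\le0$, I would take the corresponding generalized eigenvector $V$, or a $2$-plane in the complex case. Using $K(A,V)=-|D^sV|^2+\langle[D^s,D^a]V,V\rangle$, I would show that either $K\ge0$ on some plane, or $\mathfrak g$ contains a flat direction, contradicting $K<0$. After possibly replacing $A$ by $-A$, this gives (2.2). The delicate case is purely imaginary eigenvalues, which is handled by Heintze's trace argument: $\sum K(A,e_i)=-\mathrm{tr}(D^s)^2+\ldots$ together with $\mathrm{tr}D>0$.
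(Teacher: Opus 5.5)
The paper gives no proof of this theorem (it is quoted from Heintze), so your sketch has to stand on its own. The direction (1)$\Rightarrow$(2) has a genuine gap.

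\textbf{Codimension one (2.1).} Take orthonormal $X,Y\perp\mathfrak g'$. Every bracket entering Milnor's formula ($[Z,X]$, $[Z,Y]$, $[X,[X,Y]]$, $[Y,[Y,X]]$) lies in $\mathfrak g'$. Hence $U(X,X)=U(Y,Y)=U(X,Y)=0$, and the formula gives $K(X,Y)=-\tfrac34|[X,Y]|^2\le 0$. There are no positive $|U|^2$ terms at all. You get a contradiction only if $[X,Y]=0$, which nothing forces, so (2.1) is not proved.

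\textbf{Eigenvalues (2.2).} Your route fails for a related reason: planes through $A$ cannot see eigenvalues of the wrong sign. Take $\mathfrak{sol}=\mathbb RA\ltimes_D\mathbb R^2$ with $D=\mathrm{diag}(1,-1)$ and $A,e_1,e_2$ orthonormal. Then $K(A,V)=-|D^sV|^2=-1$ for every unit $V\in\mathbb R^2$. The positive curvature sits in the plane $\mathbb R^2$ itself, via the Gauss term $-\langle D^se_1,e_1\rangle\langle D^se_2,e_2\rangle=+1$. Likewise $\mathrm{tr}\,D>0$ says nothing about individual real parts.

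What must be shown is that all real parts are nonzero and of one sign. That needs an argument controlling $\mathrm{ad}A$ on all of $\mathfrak g'$ at once. One way:
\begin{enumerate}
\item[(i)] By left invariance $K\le-\delta<0$, so the simply connected group $G$ is a pinched Hadamard manifold.
\item[(ii)] A solvable group acting transitively on it fixes a point $\xi$ at infinity. The Busemann character then has a kernel $\mathfrak k\supset\mathfrak g'$ of codimension one.
\item[(iii)] For $Y\in\mathfrak k$, the Killing field $x\mapsto dR_xY$ along a ray $\gamma$ from $e$ to $\xi$ is a stable Jacobi field. Hence $|\mathrm{Ad}(\gamma(t)^{-1})Y|\le e^{-\sqrt{\delta}\,t}|Y|$.
\item[(iv)] Since $\mathrm{Ad}(G)$ acts trivially on $\mathfrak g/\mathfrak g'$, this forces $\mathfrak k=\mathfrak g'$.
\item[(v)] Write $\gamma(t)=n_t\exp(s_tA)$ and pass to $\mathfrak g'/[\mathfrak g',\mathfrak g']$, then down the lower central series. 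This forces all eigenvalues of $\mathrm{ad}A|\mathfrak g'$ to have real parts of one strict sign.
\end{enumerate}

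\textbf{The direction (2)$\Rightarrow$(1).} Your scheme is the right one: keep the inner product, replace $D$ by $\lambda D$, and use compactness of the Grassmannian. The bookkeeping, however, is off.

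With $D^a=\tfrac12(D-D^t)$ one has $K(A,V)=-|D^sV|^2-\langle[D^s,D^a]V,V\rangle$. The commutator term is of order $\lambda^2$, not $\lambda$, so $D^s>0$ alone does not suffice. For example, take $D=\begin{pmatrix}1&-3\\3&2\end{pmatrix}$ in an orthonormal basis. Then $D^s=\mathrm{diag}(1,2)>0$ and the eigenvalues are $(3\pm i\sqrt{35})/2$. But $(D^s)^2+[D^s,D^a]=\begin{pmatrix}1&3\\3&4\end{pmatrix}$ is indefinite, so $K(A,V)>0$ for some $V$ and every $\lambda$.

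What you need is $D^s>0$ together with $(D^s)^2+[D^s,D^a]>0$. Your $\varepsilon$-rescaled real Jordan basis does supply both, because there $D^a$ commutes with $D^s$ up to $O(\varepsilon)$. You should state that as the reason.

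Mixed planes then need no fallback. The $\lambda^2$-coefficient of $K$ is exactly the curvature of $\mathbb RA\ltimes_D\mathbb R^{n-1}$ with the same inner product. There $\nabla_UV=\langle D^sU,V\rangle A$ and $\nabla_UA=-D^sU$, which give $R(U,V)V\in\mathfrak n$, so the cross term vanishes. Hence for orthonormal $aA+U$, $V$ this coefficient equals
$-a^2\langle((D^s)^2+[D^s,D^a])V,V\rangle-\big(\langle D^sU,U\rangle\langle D^sV,V\rangle-\langle D^sU,V\rangle^2\big)$.
This is strictly negative on the compact set of such pairs.
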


\begin{itr}[Milnor's example cf.\cite{m}]
Suppose that the Lie algebra $\mathfrak{g}$ whose $\dim \mathfrak{g}\ge 2$ has the property that the bracket product $[x,y]$ is always equal to a linear combination of $x$ and $y$. Then,
\[
[x,y]=\ell(x)y-\ell(y)x,
\]
where $\ell$ is a well defined linear mapping from $\mathfrak{g}$ to the real numbers. Choosing any positive definite metric, the sectional curvature $K$ satisfies the following.\ 
\[
K=-||\ell||^2.
\]

\end{itr}
Here, we verify that Milnor's example satisfies condition (2) of Theorem \ref{thm:h3}.
Let it be $\dim \mathfrak{g}=n$. Since, linear map $\ell$ satisfies the following.\ 
\[
\ell([X,Y])=\ell(\ell(X)Y-\ell(Y)X)=\ell(X)\ell(Y)-\ell(Y)\ell(X)=0.
\]
Therefore, $\mathfrak{g}'\subset \mathrm{Ker}(\ell)$.\ 

Conversely, if $X\in \mathrm{Ker}(\ell)$ and $Y\in \mathfrak{g}\backslash \mathfrak{g}'$,\ 
\[
[X,Y]=\ell(X)Y-\ell(Y)X=-\ell(Y)X.
\]
Then, $X=-\frac{[X,Y]}{\ell(Y)} \in \mathfrak{g}'$.\ Therefore, $\mathfrak{g}'= \mathrm{Ker}(\ell)$.\ 
$\dim \mathfrak{g}'=\dim \mathfrak{g}-1$ follows from $\dim \mathrm{Ker}(\ell)=n-1$.\ 

Next, let $A_0\in \mathfrak{g}$ that becomes $A_0\notin \mathfrak{g}'$.\ Therefore,\ $\mathrm{ad}A_0|_{\mathfrak{g}'}:{\mathfrak{g}'}\rightarrow {\mathfrak{g}'}$ can be regarded as\ 
\[
\mathrm{ad}A_0|_{\mathfrak{g}'}:\mathrm{Ker}(\ell)\rightarrow \mathrm{Ker}(\ell) . 
\]
Let $X$ be an arbitrary element of $\mathfrak{g}'$.\ Therefore,\ 
\[
\mathrm{ad}A_0|_{\mathfrak{g}'}(X)=\ell(A_0)X .
\]
\ $\mathrm{ad}A_0|_{\mathfrak{g}'}$ is a scalar transformation and \ $\ell(A_0)$ is an eigenvalue.\ If the real part of $\ell(A_0)$ is negative, $-A_0$ can retake so that the real part of $\ell(-A_0)$ is positive. From the above, Milner's example satisfies Theorem \ref{thm:h3}(2), so $\mathfrak{g}$ is an SNC-algebra. 
\section{Classification of SNC-algebras in dimension four}
In this chapter, we refer to the results of \cite{gs} which gave the classification of SNC-algebras in dimension four.  

Let $\mathfrak{n}$ be a Lie algebra, $\delta(\mathfrak{n})$ be the set of all derivations on $\mathfrak{n}$, and
$\delta^+(\mathfrak{n})$ be the set of elements of $\delta(\mathfrak{n})$ of which eigenvalues have positive real parts.
\begin{defi}[cf. \cite{gs}]
Let $D\in \delta(\mathfrak{n})$. Take an element $A_D$ outside $\mathfrak{n}$. We set $\mathfrak{n}(D)=\mathfrak{n}\oplus\mathbb{R}A_D$ as a vector
 space and give it the Lie algebra structure by setting 
\[
[A_D,X]:=D(X),~X\in \mathfrak{n}.
\]

The Lie algebra $\mathfrak{n}(D)$ is called the expanded Lie algebra by derivation $D$.
 \end{defi}
Next, we show conditions for isomorphic expanded Lie algebras.
$\text{End}(\mathfrak{n})$ is the group of all endomorphisms on $\mathfrak{n}$. 
$\text{Aut}(\mathfrak{n})$ is the group of all automorphisms on $\mathfrak{n}$. 
The following theorem holds. 
 \begin{thm}[cf. \cite{sh}]
Let $D_1,D_2\in \delta(\mathfrak{n})$. The following are equivalent.\ 
\begin{enumerate}
\item[(1)]$\mathfrak{n}(D_1) \cong \mathfrak{n}(D_2)$ (as Lie algebras).
\item[(2)] $ D_1=g^{-1}\mathrm{ad}(X)g+\lambda g^{-1}D_2g$ for some $g\in \mathrm{Aut}(\mathfrak{n}),\ X \in \mathfrak{n},$ and $\lambda \neq 0$.
\end{enumerate}
\end{thm}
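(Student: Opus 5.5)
We prove the two implications separately. The basic tool is the identity $[A_D,X]=D(X)$ in $\mathfrak{n}(D)$. We also use the fact that $\mathfrak{n}$ is an ideal of $\mathfrak{n}(D)$. In the setting of interest, where $D\in\delta^+(\mathfrak{n})$ and $\mathfrak{n}$ is nilpotent, this ideal is intrinsic: it is the derived algebra $\mathfrak{n}(D)'$ (or the nilradical). An isomorphism of expanded algebras should therefore carry $\mathfrak{n}$ onto $\mathfrak{n}$. In general we will assume this, i.e.\ that the isomorphism in (1) preserves $\mathfrak{n}$, as is implicit in \cite{sh}.

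For (2)$\Rightarrow$(1), suppose $D_1=g^{-1}\mathrm{ad}(X)g+\lambda g^{-1}D_2g$. Define a linear map $\varphi:\mathfrak{n}(D_1)\to\mathfrak{n}(D_2)$ by $\varphi|_{\mathfrak{n}}=g$ and $\varphi(A_{D_1})=\lambda A_{D_2}+X$. On $\mathfrak{n}$, $\varphi$ preserves brackets because $g\in\mathrm{Aut}(\mathfrak{n})$. It remains to check brackets with $A_{D_1}$:
\[
[\varphi(A_{D_1}),\varphi(Y)]=\lambda D_2(gY)+[X,gY]=g\bigl(\lambda g^{-1}D_2g+g^{-1}\mathrm{ad}(X)g\bigr)Y=gD_1Y=\varphi([A_{D_1},Y]).
\]
Since $\lambda\neq0$, the map $\varphi$ is bijective, so it is an isomorphism.

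For (1)$\Rightarrow$(2), let $\varphi:\mathfrak{n}(D_1)\to\mathfrak{n}(D_2)$ be an isomorphism with $\varphi(\mathfrak{n})=\mathfrak{n}$. Then $g:=\varphi|_{\mathfrak{n}}$ lies in $\mathrm{Aut}(\mathfrak{n})$. Write $\varphi(A_{D_1})=\lambda A_{D_2}+X$ with $X\in\mathfrak{n}$. We have $\lambda\neq0$, since otherwise the image of $\varphi$ would lie in $\mathfrak{n}$, contradicting surjectivity. Applying $\varphi$ to $[A_{D_1},Y]=D_1Y$ gives
\[
gD_1Y=\lambda D_2gY+\mathrm{ad}(X)gY .
\]
Composing with $g^{-1}$ on the left yields exactly the formula in (2).

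The only delicate point is justifying that $\varphi(\mathfrak{n})=\mathfrak{n}$. When $D\in\delta^+(\mathfrak{n})$ this holds because $\mathfrak{n}(D)'=D(\mathfrak{n})+[\mathfrak{n},\mathfrak{n}]=\mathfrak{n}$, since $D$ is invertible, and derived algebras are preserved by isomorphisms. For general derivations one instead uses that $\mathfrak{n}$ is the nilradical, provided $\mathfrak{n}(D)$ is not itself nilpotent, or one simply takes preservation of $\mathfrak{n}$ as part of the hypothesis in (1).
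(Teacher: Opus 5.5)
Your proof follows the paper's argument in both directions: restrict the isomorphism to $\mathfrak{n}$, write the image of $A_{D_1}$ as $X+\lambda A_{D_2}$ and compare brackets, and conversely extend $g$ by $A_{D_1}\mapsto X+\lambda A_{D_2}$. It is more careful than the paper, which simply asserts that $h|_{\mathfrak{n}}$ is an automorphism of $\mathfrak{n}$, whereas you justify $\varphi(\mathfrak{n})=\mathfrak{n}$ via $\mathfrak{n}(D)'=\mathfrak{n}$ for invertible $D$ (the $\delta^+$ setting where the theorem is actually used) or via the nilradical.

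Your caveat is genuinely needed. Take $\mathfrak{n}=\mathbb{R}^3\rtimes_{\mathrm{diag}(-1,3,5)}\mathbb{R}u$ and the derivations $D_1=\mathrm{diag}(1,0,1)$, $D_2=\mathrm{diag}(1,2,0)$ on $\mathbb{R}^3$, both killing $u$. Then $\mathfrak{n}(D_1)\cong\mathfrak{n}(D_2)$, since both are $\mathbb{R}^3\rtimes\mathbb{R}^2$ with weights $p,q,p+2q$ for independent $p,q$. But every automorphism of $\mathfrak{n}$ is diagonal on $\mathbb{R}^3$ and fixes $u$ modulo $\mathbb{R}^3$, so O-equivalence would force $(1,0,1)\in\lambda(1,2,0)+\mathbb{R}(-1,3,5)$, which is impossible. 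Hence (1)$\Rightarrow$(2) fails for arbitrary derivations.
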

When $D_1,D_2\in \delta(\mathfrak{n})$ satisfy (2), $D_1$ and $D_2$ are called O-equivalent.

Since \cite{sh} is written in Japanese, the proof of Theorem 3.2 is provided here.
\begin{proof}
First, we assume (1). Then, there exists an isomorphism 
 \[
 h:\mathfrak{n}(D_1) \rightarrow \mathfrak{n}(D_2) ,
 \]
where $\mathfrak{n}(D_i)=\mathfrak{n}\oplus\mathbb{R}A_{D_i}\ (i=1,2)$.
The restriction of $h$ to $\mathfrak{n}$ induces an automorphism on $\mathfrak{n}$.
Moreover, $h$ can be expressed as 
 \[
 h(A_{D_1})=X+\lambda A_{D_2}
 \]
by a certain $X\in \mathfrak{n}$ and $\ \lambda \neq 0$. Since
\begin{align*}
 h([A_{D_1},Y])
 &=[h(A_{D_1}),h(Y)]\\
 &=[X+\lambda A_{D_2},h(Y)]\\
 &=[X,h(Y)]+[\lambda A_{D_2},h(Y)]\\
 &=\mathrm{ad}(X)h(Y)+\lambda D_2(h(Y))
 \end{align*}
holds for $Y\in \mathfrak{n}$, we have 

\[
 h(D_1(Y))=\mathrm{ad}(X)h(Y)+\lambda D_2(h(Y)).
\]
Hence,
\[
  D_1(Y)=h^{-1}(\mathrm{ad}(X)h(Y))+\lambda h^{-1} (D_2(h(Y))) .
\]

Moreover, setting $g=h|_\mathfrak{n}$, we have
 \[
 D_1=g^{-1}\mathrm{ad}(X)g+\lambda g^{-1}D_2g,
 \]
 so (2) is satisfied. Next,   we assume (2). Then, 
\[
 D_1=g^{-1}\mathrm{ad}(X)g+\lambda g^{-1}D_2g
 \]
for $g\in \mathrm{Aut}(\mathfrak{n}),X \in \mathfrak{n}$, and $\lambda \neq 0$. We define a linear map $h:\mathfrak{n}(D_1) \rightarrow \mathfrak{n}(D_2)$ by 

 \[
 h(Y)= \begin{cases}
 g(Y) & (Y\in \mathfrak{n}),\\
 X+\lambda A_{D_2} & (Y=A_{D_1}).
 \end{cases}
 \]
Then, $h$ is a bijection.

 Next, we verify that $h$ is a homomorphism. By the definition of $h$, we have
\[
 h([Y,Z])=g([Y,Z])=[g(Y),g(Z)]=[h(Y),h(Z)].
\]
for $Y,Z\in \mathfrak{n}$. Moreover, 
 \begin{align*}
 h([A_{D_1},Z])
 &=h(D_1(Z))\\
 &=g(D_1(Z))\\
 &=\mathrm{ad}(X)g(Z)+\lambda D_2g(Z)\\
 &=[X,g(Z)]+\lambda D_2g(Z)\\
 &=[X+\lambda A_{D_2},g(Z)]\\
 &=[h(A_{D_1}),h(Z)].
 \end{align*}
Therefore, $\mathfrak{n}(D_1) \cong \mathfrak{n}(D_2)$.

\end{proof}
When $D_1,D_2\in \delta(\mathfrak{n})$ are O-equivalent, we write  $D_1\stackrel{O}{\sim} D_2$.
\begin{lem}[cf.\cite{sh}]
The relation $D_1\stackrel{O}{\sim} D_2$ is an equivalence relation on $\delta(\mathfrak{n})$.

 \end{lem}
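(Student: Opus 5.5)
Two routes are available. The shortest uses Theorem 3.2: $D_1\stackrel{O}{\sim} D_2$ holds exactly when $\mathfrak{n}(D_1)\cong\mathfrak{n}(D_2)$ as Lie algebras. Isomorphism of Lie algebras is reflexive, symmetric and transitive, so O-equivalence is an equivalence relation on $\delta(\mathfrak{n})$. Even so, I plan to also give a direct algebraic verification from condition (2). It is short, and it shows how the data $(g,X,\lambda)$ behave under the group-like operations.

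\emph{Reflexivity.} Take $g=\mathrm{id}$, $X=0$, $\lambda=1$. Then $D=\mathrm{id}^{-1}\mathrm{ad}(0)\,\mathrm{id}+1\cdot D$.

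\emph{Symmetry.} Suppose $D_1=g^{-1}\mathrm{ad}(X)g+\lambda g^{-1}D_2g$. Conjugating by $g$ and solving for $D_2$ gives
\[
D_2=\lambda^{-1}gD_1g^{-1}-\lambda^{-1}\mathrm{ad}(X).
\]
We need this in the form $h^{-1}\mathrm{ad}(Y)h+\mu h^{-1}D_1h$. Set $h=g^{-1}$ and $\mu=\lambda^{-1}$. The key identity is that for $g\in\mathrm{Aut}(\mathfrak{n})$,
\[
g\,\mathrm{ad}(Z)\,g^{-1}=\mathrm{ad}(g(Z)),
\]
which follows from $g[Z,g^{-1}W]=[gZ,W]$. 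Hence $\mathrm{ad}(X)=g\,\mathrm{ad}(g^{-1}X)\,g^{-1}$. Taking $Y=-\lambda^{-1}g^{-1}(X)\in\mathfrak{n}$, we get $\lambda^{-1}\mathrm{ad}(X)=-g\,\mathrm{ad}(Y)\,g^{-1}$, so $D_2=h^{-1}\mathrm{ad}(Y)h+\mu h^{-1}D_1h$. Here $\mu\neq0$ and $h\in\mathrm{Aut}(\mathfrak{n})$, as required.

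\emph{Transitivity.} Suppose
\[
D_1=g^{-1}\mathrm{ad}(X)g+\lambda g^{-1}D_2g,\qquad D_2=k^{-1}\mathrm{ad}(Y)k+\mu k^{-1}D_3k.
\]
Substituting the second into the first gives
\[
D_1=g^{-1}\mathrm{ad}(X)g+\lambda (kg)^{-1}\mathrm{ad}(Y)(kg)+\lambda\mu (kg)^{-1}D_3(kg).
\]
Using the same identity, $g^{-1}\mathrm{ad}(X)g=(kg)^{-1}\mathrm{ad}(k(X))(kg)$. The two ad-terms then combine by linearity of $\mathrm{ad}$ into $(kg)^{-1}\mathrm{ad}(k(X)+\lambda Y)(kg)$. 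So $D_1\stackrel{O}{\sim}D_3$ with automorphism $kg$, element $k(X)+\lambda Y$ and scalar $\lambda\mu\neq0$.

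The only nonroutine step is the conjugation identity $g\,\mathrm{ad}(Z)g^{-1}=\mathrm{ad}(gZ)$. It is what lets inner-derivation terms move past automorphisms, and both symmetry and transitivity rest on it. Otherwise the argument is bookkeeping; alternatively, the appeal to Theorem 3.2 settles everything immediately.
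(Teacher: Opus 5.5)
Your direct verification is correct and is essentially the paper's own proof. Reflexivity uses $g=\mathrm{id}$, $X=0$, $\lambda=1$, and symmetry and transitivity both rest on $g\,\mathrm{ad}(Z)g^{-1}=\mathrm{ad}(gZ)$, giving the data $(g^{-1},-\lambda^{-1}g^{-1}X,\lambda^{-1})$ and $(kg,\,k(X)+\lambda Y,\,\lambda\mu)$. Your minus sign in the symmetry step is in fact the correct one; the paper's $D_2=\frac{1}{\lambda}\mathrm{ad}(X)+\frac{1}{\lambda}gD_1g^{-1}$ has a harmless sign slip.

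You should not use the Theorem 3.2 shortcut in this generality, however. The paper's proof of (1)$\Rightarrow$(2) tacitly assumes the isomorphism maps $\mathfrak{n}$ onto $\mathfrak{n}$. That is guaranteed when $\mathfrak{n}$ is the derived algebra (e.g.\ $D\in\delta^+(\mathfrak{n})$), but not for arbitrary $D\in\delta(\mathfrak{n})$. For example, take $[e_1,e_2]=e_4$ and the derivations $D_a\colon e_1\mapsto e_3,\ e_3\mapsto e_4$ and $D_b\colon e_1\mapsto e_2,\ e_3\mapsto e_4$ (other basis vectors to $0$). Both $\mathfrak{n}(D_a)$ and $\mathfrak{n}(D_b)$ are the algebra $[x_1,x_2]=x_3,\ [x_1,x_3]=x_5,\ [x_2,x_4]=x_5$, yet $D_a$ and $D_b$ are not O-equivalent. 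The reason is that the property $D(\mathfrak{n})\subset Z(\mathfrak{n})$ holds for $D_a$, fails for $D_b$, and is preserved by O-equivalence (inner derivations land in $\mathbb{R}e_4\subset Z(\mathfrak{n})$, and automorphisms preserve $Z(\mathfrak{n})$). So that route only works if $\cong$ is replaced by ``isomorphic via a map carrying $\mathfrak{n}$ onto $\mathfrak{n}$''.
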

\begin{proof}
If we assume $D_1,D_2,D_3\in \delta(\mathfrak{n}), g=id\in \mathrm{Aut}(\mathfrak{n})$, and $\lambda =1$, the reflexive low holds from

  \[
  D_1=\lambda g^{-1}D_1g=D_1.
  \]
 Next, we assume $D_1\stackrel{O}{\sim} D_2$. The following holds for a certain 

 \[
  D_1=g^{-1}\mathrm{ad}(X)g+\lambda g^{-1}D_2g.
 \]
Then,\ 
\begin{align*}
D_2
&=\frac{1}{\lambda}\mathrm{ad}(X)+\frac{1}{\lambda}gD_1g^{-1}\\
&=\frac{1}{\lambda}gg^{-1}\mathrm{ad}(X)gg^{-1}+\frac{1}{\lambda}gD_1g^{-1}.
\end{align*}
Furthermore, for any $Y\in \mathfrak{n}$, 
\[
  \frac{1}{\lambda}g^{-1}\mathrm{ad}(X)g=\mathrm{ad}(\frac{1}{\lambda}g^{-1}X)
\]
holds from
\begin{align*}
  \frac{1}{\lambda}g^{-1}\mathrm{ad}(X)gY
  &= \frac{1}{\lambda}g^{-1}([X,gY])\\
  &=[\frac{1}{\lambda}g^{-1}X,Y].
  \end{align*}

If $h=g^{-1},\ $and $ \frac{1}{\lambda}hX=Z$, the symmetric low holds from 
  \[
  D_2=h^{-1}\mathrm{ad}(Z)h+\frac{1}{\lambda}h^{-1}D_1h.
  \]

 Next, we assume $D_1\stackrel{O}{\sim} D_2,D_2\stackrel{O}{\sim} D_3$. For $g_1,g_2\in \mathrm{Aut}(\mathfrak{n})$, and $\lambda_1,\lambda_2 \neq 0$, the following holds.

\[
 D_1=g^{-1}\mathrm{ad}(X)g+\lambda g^{-1}D_2g,\ D_2=h^{-1}\mathrm{ad}(Y)h+\mu h^{-1}D_3h.
 \]
To summarize,

 \[
D_1=g^{-1}\mathrm{ad}(X)g+\lambda g^{-1}h^{-1}\mathrm{ad}(Y)hg+\lambda g^{-1}\mu h^{-1}D_3hg.
\]
Here,  

\[
D_1=g^{-1}h^{-1}\mathrm{ad}(hX+\lambda Y)hg+\lambda \mu g^{-1}h^{-1}D_3hg
\]
follows from 
\begin{align*}
  g^{-1}\mathrm{ad}(X)g
  &=g^{-1}h^{-1}h\mathrm{ad}(X)h^{-1}hg\\
  &=g^{-1}h^{-1}\mathrm{ad}(hX)hg.
  \end{align*}

From the above, the transitive law holds, and it becomes an equivalence relation.

\end{proof}
When $\mathfrak{n}$ is nilpotent and $D\in \delta^+(\mathfrak{n})$, $\mathfrak{n}(D)$ becomes an SNC-algebra from Theorem \ref{thm:h3}.

 On the contrary, when  $\mathfrak{g}$ is an SNC-algebra, $\mathfrak{g}$ is solvable, so $\mathfrak{n}=[\mathfrak{g},\mathfrak{g}]$ becomes a nilpotent Lie algebra. Furthermore, there exists $A\in\mathfrak{g}$ such that $\mathrm{ad}A|_{\mathfrak{n}}\in \delta^+(\mathfrak{n})$ by Theorem \ref{thm:h3}. Therefore, the following holds.

 \begin{thm}[cf.\cite{h}]\label{thm:douti}
Let $\mathfrak{g}$ be a Lie algebra of dimension $n$. Then the following are equivalent.
\begin{enumerate}
\item[(1)]$\mathfrak{g}$ is an SNC-algebra.
\item[(2)]There exist a nilpotent Lie algebra $\mathfrak{n}$ of dimension $n-1$ and $D\in\delta^+(\mathfrak{n})$ such that\ 
\[
\mathfrak{g}\cong  \mathfrak{n}(D).
\]
\end{enumerate}
\end{thm}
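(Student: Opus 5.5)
The plan is to derive both implications from Theorem \ref{thm:h3}, together with two standard facts: the derived algebra of a solvable Lie algebra is nilpotent, and a Lie algebra of strictly negative curvature is solvable, as recalled in Section 2.

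For (2)$\Rightarrow$(1), let $\mathfrak{g}=\mathfrak{n}(D)=\mathfrak{n}\oplus\mathbb{R}A_D$ with $\mathfrak{n}$ nilpotent and $D\in\delta^+(\mathfrak{n})$. Since $[\mathfrak{g},\mathfrak{g}]\subset\mathfrak{n}$ and $\mathfrak{n}$ is a nilpotent ideal with abelian one-dimensional quotient, $\mathfrak{g}$ is solvable. Next we show $[\mathfrak{g},\mathfrak{g}]=\mathfrak{n}$. Every eigenvalue of $D$ has positive real part, so $D$ is invertible on $\mathfrak{n}$. Hence $\mathfrak{n}=D(\mathfrak{n})=[A_D,\mathfrak{n}]\subset\mathfrak{g}'$, and this gives condition (2.1). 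For (2.2), choose any inner product on $\mathfrak{g}$ and let $A$ be the component of $A_D$ orthogonal to $\mathfrak{n}$. Then $A=A_D+X$ with $X\in\mathfrak{n}$, so $\mathrm{ad}A|_{\mathfrak{n}}=D+\mathrm{ad}X|_{\mathfrak{n}}$.

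The one point needing care, and the expected obstacle, is that the eigenvalues of $D+\mathrm{ad}X$ still have positive real parts. To handle it, consider the solvable Lie algebra $\mathfrak{s}=\mathbb{R}D\oplus\mathrm{ad}(\mathfrak{n})\subset\mathfrak{gl}(\mathfrak{n})$. This is a Lie subalgebra, because $[D,\mathrm{ad}X]=\mathrm{ad}(DX)$. By Lie's theorem, after complexifying, $\mathfrak{s}$ is simultaneously upper triangularizable. In such a basis each $\mathrm{ad}X$ is nilpotent, since $\mathfrak{n}$ is nilpotent, so its diagonal entries vanish. Therefore $D+\mathrm{ad}X$ has the same diagonal entries as $D$, and hence the same eigenvalues. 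By Theorem \ref{thm:h3}, $\mathfrak{g}$ is an SNC-algebra.

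For (1)$\Rightarrow$(2), let $\mathfrak{g}$ be an SNC-algebra of dimension $n$. Then $\mathfrak{g}$ is solvable, so $\mathfrak{n}:=[\mathfrak{g},\mathfrak{g}]$ is nilpotent, and by (2.1) it has dimension $n-1$. Take $A$ as in (2.2), with the inner product given by the SNC structure. Since $A\perp\mathfrak{n}$ and $A\neq0$ (its eigenvalue condition forces this unless $\mathfrak{n}=0$, a trivial case), we have $\mathfrak{g}=\mathfrak{n}\oplus\mathbb{R}A$ as vector spaces. Set $D=\mathrm{ad}A|_{\mathfrak{n}}$. This is a derivation of $\mathfrak{n}$ by the Jacobi identity, and $D\in\delta^+(\mathfrak{n})$ by (2.2). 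Finally, the linear map $\mathfrak{n}(D)\to\mathfrak{g}$ that is the identity on $\mathfrak{n}$ and sends $A_D\mapsto A$ preserves all brackets by construction, so it is the required isomorphism.
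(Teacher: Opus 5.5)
Your proof is correct and follows the same route as the paper, which derives both directions directly from Theorem \ref{thm:h3}, using that SNC-algebras are solvable and that the derived algebra of a solvable Lie algebra is nilpotent. The paper's argument is only a two-line sketch; you supply what it leaves implicit, namely $[\mathfrak{n}(D),\mathfrak{n}(D)]=\mathfrak{n}$ via invertibility of $D$, and the eigenvalue check for the orthogonal component of $A_D$ (your Lie's-theorem step is valid, though you could bypass it by choosing an inner product in which $A_D\perp\mathfrak{n}$).
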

In order to classify SNC-algebras, it is sufficient to consider nilpotent Lie algebras  $\mathfrak{n}$ and $\delta^+(\mathfrak{n})$ by Theorem3.4.
Here, we introduce the classification results of SNC-algebras in dimension four conducted in \cite{gs,sh}. Let $\mathfrak{g}$ be an SNC-algebra in dimension four. There exists an expanded Lie algebra $\mathfrak{n}(D)$ that satisfying $\mathfrak{g}\cong  \mathfrak{n}(D)$ by Theorem \ref{thm:douti}. In particular, $\mathfrak{n}$ is a three-dimensional Lie algebra. It is known that $\mathfrak{n}$ is either (A) or (B) below.

    \begin{enumerate}
\item[(A)]$\mathfrak{n}$ is abelian.
\item[(B)]$\mathfrak{n}$ has an orthonormal basis $e_1,e_2,e_3$ with the relations
\[
[e_1,e_2]=e_3,\ [e_2,e_3]=[e_3,e_1]=0.
\]
\end{enumerate}
\subsection{Abelian case}
Assume that $\mathfrak{n}$ is abelian. Note that in this case $\delta(\mathfrak{n})=\mathrm{End}(\mathfrak{n})$, and$\ \mathrm{Aut}(\mathfrak{n})=\mathrm{GL}(\mathfrak{n})$ hold. Assuming $\mathfrak{n}(D_1) \cong \mathfrak{n}(D_2)$ with respect to $D_1,D_2\in \delta(\mathfrak{n})$, isomorphism $h:\mathfrak{n}(D_1) \rightarrow \mathfrak{n}(D_2)$ exists. When $h(A_{D_1})=X+\lambda A_{D_2},\ X\in \mathfrak{n},$ and$\ \lambda \neq 0$, 

  \begin{align*}
 hD_1(Y)
 &=h([A_{D_1},Y])\\
 &=[h(A_{D_1}),h(Y)]\\
 &=[X+\lambda A_{D_2},h(Y)]\\
 &=\lambda D_2h(Y),
 \end{align*}
holds for $Y\in \mathfrak{n}$. If $g=h|_{\mathfrak{n}}$, then $g\in \mathrm{Aut}(\mathfrak{n})$ and, $D_1=\lambda g^{-1}D_2g$ holds, so the following hold from Theorem \ref{thm:douti}.

  \begin{lem}[\cite{sh} Lemma 2.1]\label{lem:ab}
  Let $D_1,D_2\in \delta(\mathfrak{n})$. Then the following are equivalent.
  \begin{enumerate}
\item[(1)]$\mathfrak{n}(D_1) \cong \mathfrak{n}(D_2)$\ (as Lie algebras).
\item[(2)]$D_1=\lambda g^{-1}D_2g$ for some $g\in \mathrm{GL}(\mathfrak{n}),\ \lambda \neq 0$.
\end{enumerate}
\end{lem}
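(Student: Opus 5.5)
The statement is the specialization of Theorem 3.2 to the abelian case, so the plan is to invoke Theorem 3.2 and observe that the inner-derivation term disappears. When $\mathfrak{n}$ is abelian, $\mathrm{ad}(X)=0$ for every $X\in\mathfrak{n}$, since $\mathrm{ad}(X)Y=[X,Y]=0$. Moreover $\mathrm{Aut}(\mathfrak{n})=\mathrm{GL}(\mathfrak{n})$, because every linear bijection trivially preserves the zero bracket. Likewise $\delta(\mathfrak{n})=\mathrm{End}(\mathfrak{n})$.

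For (1)$\Rightarrow$(2), I would apply Theorem 3.2 directly. An isomorphism $\mathfrak{n}(D_1)\cong\mathfrak{n}(D_2)$ gives $g\in\mathrm{Aut}(\mathfrak{n})$, $X\in\mathfrak{n}$ and $\lambda\neq0$ with $D_1=g^{-1}\mathrm{ad}(X)g+\lambda g^{-1}D_2g$. Since $\mathrm{ad}(X)=0$, this reduces to $D_1=\lambda g^{-1}D_2g$ with $g\in\mathrm{GL}(\mathfrak{n})$. Alternatively, one can reproduce the computation preceding the statement. Write $h(A_{D_1})=X+\lambda A_{D_2}$. Then $[X,h(Y)]=0$, so $hD_1=\lambda D_2h$ on $\mathfrak{n}$.

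One point needs care here: that $h$ maps $\mathfrak{n}$ into $\mathfrak{n}$, so that $g=h|_{\mathfrak{n}}$ makes sense. In general this holds because the derived algebra is preserved by isomorphisms. In the relevant situation, $\mathfrak{n}=[\mathfrak{n}(D_i),\mathfrak{n}(D_i)]$ when $D_i$ is invertible, for instance when $D_i\in\delta^+(\mathfrak{n})$. I would rely on Theorem 3.2 as stated, which already packages this point.

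For (2)$\Rightarrow$(1), given $D_1=\lambda g^{-1}D_2g$, take $X=0$ in Theorem 3.2(2). Then $D_1=g^{-1}\mathrm{ad}(0)g+\lambda g^{-1}D_2g$ with $g\in\mathrm{GL}(\mathfrak{n})=\mathrm{Aut}(\mathfrak{n})$, so Theorem 3.2 gives the isomorphism. One can also check this directly. Define $h$ by $h|_{\mathfrak{n}}=g$ and $h(A_{D_1})=\lambda A_{D_2}$. Then $h([A_{D_1},Z])=gD_1Z=\lambda D_2gZ=[h(A_{D_1}),h(Z)]$, and brackets within $\mathfrak{n}$ vanish on both sides.

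No step is a genuine obstacle. The only subtlety is the one inherited from Theorem 3.2, namely that $h$ preserves $\mathfrak{n}$, and that is settled by citing Theorem 3.2.
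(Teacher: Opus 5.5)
Your proposal is correct and follows essentially the paper's own argument: the paper writes $h(A_{D_1})=X+\lambda A_{D_2}$, uses that $\mathfrak{n}$ is abelian to get $hD_1=\lambda D_2h$ on $\mathfrak{n}$, and sets $g=h|_{\mathfrak{n}}\in\mathrm{Aut}(\mathfrak{n})=\mathrm{GL}(\mathfrak{n})$, with the converse being the $X=0$ case of Theorem 3.2, just as in your direct check. The point you flag, that $h(\mathfrak{n})=\mathfrak{n}$, is also taken for granted in the paper; it is automatic when the $D_i$ are invertible, and in the abelian case the lemma holds even without it, because a second abelian codimension-one ideal forces $D_1$ and $D_2$ to be square-zero of the same rank at most one, hence conjugate.
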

When $D_1,D_2\in \delta(\mathfrak{n})$ satisfy (2), $D_1$ and $D_2$ are called {\it A-equivalent}. 
Let $e_1,e_2,e_3$ be an arbitrarily orthonormal basis of $\mathfrak{n}$.\ 

From the discussion of Lemma \ref{lem:ab} and the Jordan canonical form, the following holds.

\begin{prop}[cf.\cite{gs}]\label{prop:3a}
The representation matrix of $D\in\delta^+(\mathfrak{n})$ concerning with $e_1,e_2,e_3$ is conjugate to one of the following.

\[
 \left(
  \begin{array}{ccc}
     x & 0 & 0 \\
     0 & y & 0 \\
     0 & 0 & 1
  \end{array}
 \right),\ 
 \left(
  \begin{array}{ccc}
    z & 0 & 0 \\
    0 & 1 & 1 \\
    0 & 0 & 1
  \end{array}
 \right),\ 
 \left(
  \begin{array}{ccc}
    1 & 1 & 0 \\
    0 & 1 & 1 \\
    0 & 0 & 1
  \end{array}
 \right),\ 
 \left(
  \begin{array}{ccc}
   \alpha & -\beta & 0\\
   \beta &\alpha & 0 \\
    0 & 0 & 1
  \end{array}
 \right),
\]
where $0<x\le y\le 1,\ 0<z,\ 0<\alpha,\beta$.

\end{prop}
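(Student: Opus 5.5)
By Lemma \ref{lem:ab}, two derivations of the abelian algebra $\mathfrak{n}$ give isomorphic expanded algebras exactly when $D_1=\lambda g^{-1}D_2g$ for some $g\in\mathrm{GL}(\mathfrak{n})$ and $\lambda\neq 0$. So the plan is to classify $D\in\delta^+(\mathfrak{n})=\{D\in\mathrm{End}(\mathfrak{n}) : \text{all eigenvalues have positive real part}\}$ up to real similarity and positive rescaling. Positive rescaling is enough: rescaling by $\lambda<0$ would leave $\delta^+(\mathfrak{n})$, so it never relates two elements of $\delta^+(\mathfrak{n})$. First I will apply the real Jordan canonical form to put $D$ into a normal form by a change of basis $g$. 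Then I will use $\lambda>0$ to normalize one distinguished eigenvalue to $1$.

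The cases follow the structure of the characteristic polynomial of $D$, a real cubic, which therefore has at least one real root.

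\emph{Three real eigenvalues, $D$ diagonalizable.} Write the eigenvalues as $0<a\le b\le c$ after permuting the basis, which is a conjugation. Divide by $c$ to get $\mathrm{diag}(x,y,1)$ with $0<x\le y\le 1$.

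\emph{A Jordan block of size $2$.} The real Jordan form is $\mathrm{diag}(a)\oplus J_2(b)$ with $a,b>0$ and $a$ possibly equal to $b$. Divide by $b$; this gives $\mathrm{diag}(z)\oplus\begin{pmatrix}1&\epsilon\\0&1\end{pmatrix}$ with $\epsilon=1/b$. Conjugating by $\mathrm{diag}(1,1,\epsilon)$ (or the analogous diagonal scaling of the block) restores the off-diagonal entry to $1$. This yields the second form, with $z>0$ arbitrary.

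\emph{A Jordan block of size $3$.} Similarly, $J_3(a)/a$ is conjugate to $J_3(1)$ by a diagonal matrix. This gives the third form.

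\emph{A pair of non-real eigenvalues.} The eigenvalues are $p\pm iq$ with $q\neq 0$, together with a real eigenvalue $r>0$, and $p>0$. The real Jordan form is the rotation-type block $\begin{pmatrix}p&-q\\q&p\end{pmatrix}\oplus(r)$. Swapping the basis vectors of the block replaces $q$ by $-q$, so we may take $q>0$. Dividing by $r$ gives the fourth form with $\alpha=p/r>0$ and $\beta=q/r>0$.

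Finally, the listed matrices are conjugate to the Jordan forms of the original $D$. The statement only claims that every $D$ is conjugate to one of them, up to the scaling allowed by Lemma \ref{lem:ab}, so no non-redundancy check is needed.

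The only real subtlety, and the step I expect to need care, is normalizing the off-diagonal $1$'s after the scaling. This is handled by the diagonal conjugations described above. Everything else is a direct application of the real Jordan canonical form.
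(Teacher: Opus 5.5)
Your proposal is correct and is essentially the paper's argument. The paper justifies this proposition only by the phrase ``from the discussion of Lemma~\ref{lem:ab} and the Jordan canonical form''; your case-by-case use of the real Jordan form supplies the omitted details, including the observation that $\lambda$ must be positive and can be used to normalize one eigenvalue to $1$, and your reading of ``conjugate'' as conjugate up to the nonzero scaling of Lemma~\ref{lem:ab} is the right one (e.g.\ $\mathrm{diag}(1,2,3)$ is not literally conjugate to any listed matrix).
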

Proposition \ref{prop:3a} implies the following classification result immediately.

\begin{thm}[\cite{gs} Theorem 2.2]\label{thm:ab}
Let $\mathfrak{g}$ be an SNC-algebra of dimension four with the abelian derived algebra $\mathfrak{n}=[\mathfrak{g},\mathfrak{g}]$, and let $e_1,e_2,e_3$ be an arbitrarily orthonormal basis of $\mathfrak{n}$. Then there exists \ $e_4\in \mathfrak{g}\backslash\mathfrak{n}$ whose bracket operations are given by one of the following.\ 
\begin{enumerate}
\item[(1)]
    $ [e_4,e_1]=xe_1,\ [e_4,e_2] =ye_2,\   [e_4,e_3]=e_3,\ 0<x\le y\le 1$.

\item[(2)]
   $ [e_4,e_1]=ze_1,\ [e_4,e_2] =e_2,\   [e_4,e_3]=e_2+e_3,\ 0<z$.
\item[(3)]
   $ [e_4,e_1]=e_1,\ [e_4,e_2] =e_1+e_2,\   [e_4,e_3]=e_2+e_3$.
\item[(4)]
 $ [e_4,e_1]=\alpha e_1+\beta e_2,\ [e_4,e_2] =-\beta e_1+\alpha e_2,\ [e_4,e_3]=e_3,\ 0<\alpha,\beta$.
 
\end{enumerate}
Any two Lie algebras in different types or with different parameters are not isomorphic to each other.

\end{thm}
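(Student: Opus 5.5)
The plan is to deduce Theorem \ref{thm:ab} from Theorem \ref{thm:douti}, Lemma \ref{lem:ab} and Proposition \ref{prop:3a}. The proof has two parts: existence of the normal forms, and pairwise non-isomorphism.

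\emph{Existence.} Let $\mathfrak{g}$ be a four-dimensional SNC-algebra with $\mathfrak{n}=[\mathfrak{g},\mathfrak{g}]$ abelian. By Theorem \ref{thm:douti} (more precisely Theorem \ref{thm:h3}), there is $A\in\mathfrak{g}\setminus\mathfrak{n}$ with $D=\mathrm{ad}A|_{\mathfrak{n}}\in\delta^+(\mathfrak{n})$. Since $\mathfrak{n}$ is abelian, $\delta(\mathfrak{n})=\mathrm{End}(\mathfrak{n})$. The eigenvalues of $D$ have positive real parts, so we may rescale $A$ by a positive constant; this keeps $D$ in $\delta^+(\mathfrak{n})$. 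We then pass to the real Jordan form and order the eigenvalues. There are four possibilities:
\begin{itemize}
\item three real eigenvalues with $D$ diagonalizable: normalize the largest to $1$, giving $0<x\le y\le 1$;
\item a $2\times 2$ Jordan block: normalize its eigenvalue to $1$ and let $z>0$ be the remaining eigenvalue;
\item a single $3\times 3$ block: normalize the eigenvalue to $1$;
\item a complex pair $\alpha\pm i\beta$ with a real eigenvalue: normalize the real eigenvalue to $1$, and replace $\beta$ by $-\beta$ if necessary (by conjugating with a reflection) to get $\beta>0$.
\end{itemize}
In the two Jordan-block cases, the off-diagonal entries can be made equal to $1$ by conjugating with a diagonal matrix. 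This is Proposition \ref{prop:3a}: there is $g\in\mathrm{GL}(\mathfrak{n})$ with $g^{-1}Dg$ equal to one of the four listed matrices with respect to $e_1,e_2,e_3$. The basis is only required to be orthonormal for the metric, and any basis of $\mathfrak{n}$ can be declared orthonormal, so we may use $g$ freely. Reading off the columns of the matrix gives the brackets $[e_4,e_i]$ with $e_4=A$ suitably scaled, and $[e_i,e_j]=0$ on $\mathfrak{n}$. This yields types (1)--(4).

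\emph{Non-isomorphism.} By Lemma \ref{lem:ab}, $\mathfrak{n}(D_1)\cong\mathfrak{n}(D_2)$ if and only if $D_1=\lambda g^{-1}D_2g$ with $\lambda\neq0$. So it suffices to show that no two listed matrices are A-equivalent. Conjugation preserves the Jordan structure, that is, diagonalizability, block sizes and whether the eigenvalues are real. Scaling by $\lambda$ preserves this structure as well. Hence the four types are distinguished: type (1) is diagonalizable with real spectrum, type (2) has a $2\times2$ block, type (3) has a $3\times3$ block, and type (4) has nonreal eigenvalues. Within a type, positivity of the real parts forces $\lambda>0$. Within type (1), the eigenvalue multiset $\{x,y,1\}$ up to positive scaling is pinned down by the normalization that the maximum equals $1$, so $(x,y)$ is an invariant. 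Within type (2), the eigenvalue of the Jordan block is distinguished, and normalizing it to $1$ fixes $\lambda$, so $z$ is an invariant. Within type (4), the real eigenvalue is distinguished; normalizing it to $1$ fixes $\lambda$, and then the pair $\{\alpha\pm i\beta\}$ with $\beta>0$ is an invariant.

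The main obstacle is making this invariant argument clean where the normalization is ambiguous, namely in type (2) when $z=1$, and in type (1) when eigenvalues coincide. I would handle this by stating explicitly that the Jordan type is an A-equivalence invariant. Then within each type I would name the distinguished eigenvalue that is scaled to $1$, and verify that after this choice $\lambda=1$ is forced, so that the spectra must agree exactly.
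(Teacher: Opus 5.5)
Your proposal is correct and follows the paper's route: the paper obtains the theorem immediately from Proposition~\ref{prop:3a}, which it in turn derives from the A-equivalence criterion of Lemma~\ref{lem:ab} together with the real Jordan canonical form under rescaling. Your invariant argument already covers the cases $z=1$ and coinciding eigenvalues, so the obstacle you flag is not a real one: Jordan type is preserved by conjugation and nonzero scaling, positivity of real parts forces $\lambda>0$, and $\lambda=1$ is then fixed by the largest eigenvalue, the eigenvalue of the size-two block, or the unique real eigenvalue.
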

\begin{remk}
Assume that four-dimensional Lie algebra satisfies the condition Milnor indicated by Example 2.1. From the conditions, $\mathfrak{n}=[\mathfrak{g},\mathfrak{g}]$ is abelian. For an orthonormal basis $e_1,e_2,e_3$ on $\mathfrak{n}$ fixed arbitrarily, define $e_4\in \mathfrak{g}$ and linear map $\ell :\mathfrak{g}\to \mathbb{R}$ satisfies 

\[
\ell(e_4)=1, \ell(e_1)=\ell(e_2)=\ell(e_3)=0.
\]
Then, the following is satisfied.

\begin{align*}
[e_4,e_1]=&\ell(e_4)e_1-\ell(e_1)e_4=e_1,\\
[e_4,e_2]=&\ell(e_4)e_2-\ell(e_2)e_4=e_2,\\
[e_4,e_3]=&\ell(e_4)e_3-\ell(e_3)e_4=e_3.
\end{align*}
From the above, Milnor's example is the case of $x=y=1$ in Theorem \ref{thm:ab} (1). 

\end{remk}
\subsection{Non abelian case}

Let $\mathfrak{n}$ be a non-abelian nilpotent Lie algebra of dimension three. Then $\mathfrak{n}$ has the orthonormal basis $e_1,e_2,e_3$ with the relations
\[
[e_1,e_2]=e_3,\ [e_2,e_3]=[e_3,e_1]=0.
\]

Let $\mathfrak{m}$ be a linear subspace of $\mathfrak{n}$ spanned by $e_1,e_2$, and let $\pi$ be an orthgonal projection from $\mathfrak{n}$ onto $\mathfrak{m}$. First, we examine the structures of $\delta^+(\mathfrak{n})$ and $\mathrm{Aut}(\mathfrak{n})$.

\begin{lem}[\cite{gs} Lemma3.1]\label{lem:n3} 

\begin{enumerate}
\item[(1)]
The representation matrix of $ D\in \delta^+(\mathfrak{n})$ concerning with $e_1,e_2,e_3$ takes the following form.
 \begin{align}
  D(e_1,e_2,e_3)=
  (e_1,e_2,e_3)\left(
  \begin{array}{ccc}
     x_{11} &x_{12} & 0 \\
     x_{21} &x_{22} & 0 \\
     x_{31} &x_{32} &x_{11}+x_{22}
  \end{array}
 \right). \tag{3.1}
 \end{align}
 On the contrary, if the representation matrix of $ D\in \mathrm{End}(\mathfrak{n})$ concerning with $e_1,e_2,e_3$ satisfies Equation (3.1),\ each eigenvalue of $D_\mathfrak{m}:=\pi\circ D|_{\mathfrak{m}}$ has a positive real part, and $\mathrm{tr}(D_\mathfrak{m})>0$,  then $D\in \delta^+(\mathfrak{n})$.
 \item[(2)]
 The representation matrix of $ A\in \mathrm{Aut}(\mathfrak{n})$ concerning with $e_1,e_2,e_3$ takes the following form.
 \begin{align}
 A(e_1,e_2,e_3)=
  (e_1,e_2,e_3)\left(
  \begin{array}{ccc}
     y_{11} &y_{12} & 0 \\
     y_{21} &y_{22} & 0 \\
     y_{31} &y_{32} &y_{11}y_{22}-y_{12}y_{21}
  \end{array}
  \right).\tag{3.2}
 \end{align}
On the contrary, if the representation matrix of $ A\in \mathrm{GL}(\mathfrak{n})  $ concerning with $e_1,e_2,e_3$ satisfies Equation (3.2), then $A\in \mathrm{Aut}(\mathfrak{n})$.
\end{enumerate}
  \end{lem}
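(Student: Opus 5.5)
The plan is to compute everything directly in the basis $e_1,e_2,e_3$, using the Leibniz rule $D[X,Y]=[DX,Y]+[X,DY]$ for derivations and $A[X,Y]=[AX,AY]$ for automorphisms. The key structural fact is that $\mathbb{R}e_3=[\mathfrak{n},\mathfrak{n}]$ is the center of $\mathfrak{n}$. It is preserved by every derivation, since $D e_3=D[e_1,e_2]\in[\mathfrak{n},\mathfrak{n}]$, and by every automorphism. This immediately forces the third column of both matrices to have zeros in the first two entries.

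For (1), write $De_j=\sum_i x_{ij}e_i$.

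\begin{itemize}
\item From $De_3=D[e_1,e_2]=[De_1,e_2]+[e_1,De_2]=x_{11}e_3+x_{22}e_3$ we get the third column $(0,0,x_{11}+x_{22})^T$.
\item The remaining relations $D[e_2,e_3]=0$ and $D[e_3,e_1]=0$ hold automatically, because $De_3$ is a multiple of the central element $e_3$.
\item Hence the derivations are exactly the matrices of the form (3.1), with $x_{11},x_{12},x_{21},x_{22},x_{31},x_{32}$ free.
\end{itemize}

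For the converse, a matrix of the form (3.1) is a derivation by the same computation. It is block lower triangular with respect to the flag $\mathbb{R}e_3\subset\mathfrak{n}$. Its spectrum is therefore the spectrum of the upper-left $2\times2$ block, which is exactly $D_\mathfrak{m}=\pi\circ D|_\mathfrak{m}$, together with $x_{11}+x_{22}=\mathrm{tr}(D_\mathfrak{m})$. So the hypotheses on the eigenvalues of $D_\mathfrak{m}$ and on $\mathrm{tr}(D_\mathfrak{m})$ give $D\in\delta^+(\mathfrak{n})$. The positive trace is in fact implied by the eigenvalue condition, but this does not matter for the proof.

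For (2), the argument is the same. Since $Ae_3=[Ae_1,Ae_2]$, expanding bilinearly gives
\[
Ae_3=(y_{11}y_{22}-y_{12}y_{21})e_3,
\]
because only the $e_1,e_2$ components contribute to the bracket. The other two bracket relations then hold automatically by centrality of $e_3$. Invertibility forces $y_{11}y_{22}-y_{12}y_{21}\neq0$, which is consistent with (3.2). Conversely, any invertible matrix of the form (3.2) preserves all three bracket relations and is therefore an automorphism.

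There is no real obstacle here. The only point needing care is justifying that checking $[e_1,e_2]=e_3$ suffices, which follows from bilinearity and antisymmetry of the bracket together with centrality of $e_3$. The statement "the representation matrix of $D\in\delta^+(\mathfrak{n})$ takes the form (3.1)" should also be read as holding for all of $\delta(\mathfrak{n})$.
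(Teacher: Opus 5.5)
Your proof is correct and takes essentially the same route as the paper. The paper only quotes this lemma from the earlier joint work, but it proves the four-dimensional analogues in Section 4 by exactly your method: expanding $D[e_1,e_2]=[De_1,e_2]+[e_1,De_2]$ and $A[e_1,e_2]=[Ae_1,Ae_2]$ in coordinates. Your block-triangular argument, giving the spectrum as the eigenvalues of $D_\mathfrak{m}$ together with $\mathrm{tr}(D_\mathfrak{m})$, also makes explicit the eigenvalue step that the paper only asserts for the converse direction.
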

 Furthermore, the following lemma holds.

 \begin{lem}[cf. \cite{sh}]\label{lem:n3-1}
For any $D\in \delta^+(\mathfrak{n})$, there exists unique $\Tilde{D}\in \delta^+(\mathfrak{n})$ that is conjugate to $D$ by $\mathrm{Aut}(\mathfrak{n})$ such that the representation matrix of $\Tilde{D}$ concerning with $e_1,e_2,e_3$ takes the following form.
 \[
  \Tilde{D}=
  \left(
  \begin{array}{ccc}
     x_{11} &x_{12} & 0 \\
     x_{21} &x_{22} & 0 \\
     0 &0 &x_{11}+x_{22}
  \end{array}
 \right) \tag{3.3}.
  \]
\end{lem}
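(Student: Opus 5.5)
Our plan is to conjugate $D$ by an inner-type automorphism of the form
\[
g=\left(\begin{array}{ccc} 1&0&0\\ 0&1&0\\ a&b&1\end{array}\right),\qquad a,b\in\mathbb{R},
\]
which lies in $\mathrm{Aut}(\mathfrak{n})$ by Lemma \ref{lem:n3}(2), since its upper-left block has determinant $1$ equal to the $(3,3)$ entry. Write $D$ in the form (3.1) of Lemma \ref{lem:n3}(1), with upper-left block $M=\left(\begin{smallmatrix} x_{11}&x_{12}\\ x_{21}&x_{22}\end{smallmatrix}\right)$, bottom row $v=(x_{31},x_{32})$ and $t=x_{11}+x_{22}=\mathrm{tr}\,M$. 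Put $w=(a,b)$. Block multiplication shows that $g^{-1}Dg$ has the same upper-left block $M$, the same $(3,3)$ entry $t$, and bottom row
\[
v+tw-wM=v-w(M-tI).
\]
Thus the task is to solve $w(M-tI)=v$ for $w$.

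For existence, note that the eigenvalues of $M-tI$ are $\lambda_i-t=-\lambda_j$, where $\lambda_1,\lambda_2$ are the eigenvalues of $M$. Since $D\in\delta^+(\mathfrak{n})$, each $\lambda_j$ has positive real part, so $M-tI$ is invertible. We therefore take $w=v(M-tI)^{-1}$. Then $\tilde D=g^{-1}Dg$ has the form (3.3), and $\tilde D$ is again in $\delta^+(\mathfrak{n})$ because it is a conjugate derivation with the same eigenvalues.

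For uniqueness, the main point is to fix what is being held constant: the form (3.3) is unique once the upper-left block is fixed. Suppose $\tilde D_1$ and $\tilde D_2$ both have the form (3.3), are $\mathrm{Aut}(\mathfrak{n})$-conjugate to $D$, and share the block $M$. If both arise from conjugation by unipotent automorphisms $g$ as above, then they differ by conjugation by another such $g$ with some row $w$. The computation above shows that this $g$ changes the bottom row by $-w(M-tI)$. Since both bottom rows are zero, $w(M-tI)=0$, and invertibility of $M-tI$ forces $w=0$. Hence $\tilde D_1=\tilde D_2$.

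The main obstacle is interpreting "unique" correctly, because a general automorphism can also conjugate $M$ within $\mathrm{GL}_2$. I would state uniqueness as: unique within the orbit of $D$ under the normal subgroup of automorphisms acting trivially on $\mathfrak{m}$ modulo $\mathbb{R}e_3$. Equivalently, $\tilde D$ is the unique member of that orbit that preserves $\mathfrak{m}$. Equivalently again, $\mathfrak{m}'=\ker(D-tI)$ gives a $D$-invariant complement to the center, and the complement is unique because $t$ is not an eigenvalue of $M$.
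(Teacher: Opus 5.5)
Your proof is correct and is essentially the paper's: the paper conjugates by $A=\left(\begin{smallmatrix} M&0\\ vM&\det M\end{smallmatrix}\right)$ and checks $AD=\tilde{D}A$ (which reduces to $v(M^2-tM+\det M\cdot I)=0$, i.e.\ Cayley--Hamilton for $M$), and this $A$ is exactly $\mathrm{diag}(M,\det M)\,g^{-1}$, where $g$ is your unipotent automorphism with $w=v(M-tI)^{-1}=-vM/\det M$ and $\mathrm{diag}(M,\det M)\in\mathrm{Aut}(\mathfrak{n})$ commutes with $\tilde{D}$. The paper does not address uniqueness at all, and once the block $M$ is fixed it is immediate anyway, since (3.3) then determines $\tilde{D}$ completely. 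There is one slip in your final remark: $\ker(D-tI)=\mathbb{R}e_3$ is the center itself, so the $D$-invariant complement you mean is $\mathrm{Im}(D-tI)=\ker(D^2-tD+\det M\cdot I)$.
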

\begin{proof}
We take an element of $\mathrm{Aut}(\mathfrak{n})$ whose representation matrix concerning with $e_1,e_2,e_3$ is

   \[A=
\left(
  \begin{array}{ccc}
     x_{11} &x_{12} & 0 \\
     x_{21} &x_{22} & 0 \\
     x_{11}x_{31}+x_{21}x_{32} &x_{22}x_{32}+x_{12}x_{31} & x_{11}x_{22}-x_{12}x_{21}
  \end{array}
 \right).
  \]
Since it becomes $AD=\Tilde{D}A$, $D$ and $\Tilde{D}$ are conjugate in $\mathrm{Aut}(\mathfrak{n})$. 
  \end{proof}
From now on, we identify $D\in \delta^+(\mathfrak{n})$ with $\Tilde{D}\in \delta^+(\mathfrak{n})$ in Lemma \ref{lem:n3-1}. Hence, we may assume the representation matrix with respect to $e_1,e_2,e_3$ is given by (3,3).
Next, a subset $\Tilde{\delta}^+(\mathfrak{n})$ of ${\delta}^+(\mathfrak{n})$ is defined by
\[
\Tilde{\delta}^+(\mathfrak{n}):=\left\{\left(
  \begin{array}{c|c}
    D_{\mathfrak{m}} &\vec{0} \\ \hline
   ^t\vec{0}&\mathrm{tr}(D_{\mathfrak{m}})  
   \end{array}
\right)\in {\delta}^+(\mathfrak{n})\relmiddle|
D_{\mathfrak{m}}\in \mathrm{GL}(\mathfrak{m})
\right\}.
 \]
Considering the quotient set of ${\delta}^+(\mathfrak{n})$ and $\Tilde{\delta}^+(\mathfrak{n})$ under O-equivalence, the following holds.
 \[
\Tilde{\delta}^+(\mathfrak{n})/\stackrel{O}{\sim}\ \subset {\delta}^+(\mathfrak{n})/\stackrel{O}{\sim}
\]
On the other hand, from Lemma \ref{lem:n3-1}, for any $D\in \delta^+(\mathfrak{n})$, there exists exactly one $\Tilde{D}\in \Tilde{\delta}^+(\mathfrak{n})$ that is O-equivalent. Then, if we denote the equivalence class of $D$ under O-equivalent by $[D]$ and the equivalence class of $\Tilde{D}$ under O-equivalet by $[\Tilde{D}]$, then since 
\[
[D]=[\Tilde{D}]\in \Tilde{\delta}^+(\mathfrak{n})/\stackrel{O}{\sim}
\]
holds, we can see that
 \[
\Tilde{\delta}^+(\mathfrak{n})/\stackrel{O}{\sim}\ = {\delta}^+(\mathfrak{n})/\stackrel{O}{\sim}
\]
follows. From the above, it is sufficient to consider O-equivalent on $\Tilde{\delta}^+(\mathfrak{n})$.\\
Next, the following commutative diagram holds for $D\in \Tilde{\delta}^+(\mathfrak{n}),\ g\in \mathrm{Aut}(\mathfrak{n}),\ g_m\in \mathrm{Aut}(\mathfrak{m}),\ $and $\pi:\mathfrak{n} \rightarrow \mathfrak{m}$.
   \[
  \begin{CD}
     \mathfrak{n} @>{D}>> \mathfrak{n} \\
  @VV{\pi}V    @VV{\pi}V \\
     \mathfrak{m}   @>>{D_\mathfrak{m}}>  \mathfrak{m}
  \end{CD}
 ~~~~~~~~~~~~~~~      \ \ 
    \begin{CD}
     \mathfrak{n} @>{g}>> \mathfrak{n} \\
  @VV{\pi}V    @VV{\pi}V \\
     \mathfrak{m}   @>>{g_\mathfrak{m}}>  \mathfrak{m}
  \end{CD}
\]
From the above, the following holds.
\begin{lem}[\cite{gs} Lemma 3.4]\label{lem:3b}
Let $D_1,D_2\in \Tilde{\delta}^+(\mathfrak{n})$, the following are equivalent.
\begin{enumerate}
\item[(1)]$\mathfrak{n}(D_1) \cong \mathfrak{n}(D_2)$\ (as Lie algebras).
\item[(2)]$(D_1)_\mathfrak{m}= \lambda{g_{\mathfrak{m}}}^{-1}(D_2)_\mathfrak{m}g_{\mathfrak{m}}$ for some $g_{\mathfrak{m}}\in \mathrm{GL}(\mathfrak{m}),\lambda \neq 0$.
\end{enumerate}
\end{lem}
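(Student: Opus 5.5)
We argue through Theorem 3.2: $\mathfrak{n}(D_1)\cong\mathfrak{n}(D_2)$ holds exactly when $D_1=g^{-1}\mathrm{ad}(X)g+\lambda g^{-1}D_2g$ for some $g\in\mathrm{Aut}(\mathfrak{n})$, $X\in\mathfrak{n}$ and $\lambda\neq0$. We pass this relation down to $\mathfrak{m}$ with the projection $\pi$, and lift it back up using Lemma 3.8(2) and the uniqueness in Lemma 3.9. In the basis $e_1,e_2,e_3$, $\mathrm{ad}(X)$ for $X=a e_1+b e_2+c e_3$ has a single nonzero row, the third: $\mathrm{ad}(X)e_1=-b e_3$ and $\mathrm{ad}(X)e_2=a e_3$. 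Hence $\pi\circ\mathrm{ad}(X)=0$. By Lemma 3.8(2), every $g\in\mathrm{Aut}(\mathfrak{n})$ preserves $\mathbb{R}e_3$ (its third column is $(0,0,\det g_\mathfrak{m})$). So $\pi\circ g=g_\mathfrak{m}\circ\pi$, where $g_\mathfrak{m}$ is the upper-left $2\times2$ block, and it is invertible. The same holds for $D_i\in\Tilde{\delta}^+(\mathfrak{n})$; these are the two commutative diagrams above.

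For (1)$\Rightarrow$(2), apply $\pi$ to the identity $gD_1=\mathrm{ad}(X)g+\lambda D_2g$ and restrict to $\mathfrak{m}$. This gives $g_\mathfrak{m}(D_1)_\mathfrak{m}=\lambda(D_2)_\mathfrak{m}g_\mathfrak{m}$, which is (2) with $g_\mathfrak{m}\in\mathrm{GL}(\mathfrak{m})$.

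For (2)$\Rightarrow$(1), take $g_\mathfrak{m}\in\mathrm{GL}(\mathfrak{m})$ and $\lambda\neq0$ with $(D_1)_\mathfrak{m}=\lambda g_\mathfrak{m}^{-1}(D_2)_\mathfrak{m}g_\mathfrak{m}$. Extend $g_\mathfrak{m}$ to $g\in\mathrm{GL}(\mathfrak{n})$ by the block-diagonal matrix with blocks $g_\mathfrak{m}$ and $\det g_\mathfrak{m}$. By the converse part of Lemma 3.8(2), $g\in\mathrm{Aut}(\mathfrak{n})$. Then compute $\lambda g^{-1}D_2g$ blockwise. The upper-left block is $(D_1)_\mathfrak{m}$, the off-diagonal blocks vanish, and the lower-right entry is $\lambda\,\mathrm{tr}(D_2)_\mathfrak{m}$. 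Taking traces in (2) gives $\mathrm{tr}(D_1)_\mathfrak{m}=\lambda\,\mathrm{tr}(D_2)_\mathfrak{m}$. Therefore $\lambda g^{-1}D_2g=D_1$ exactly, and Theorem 3.2 with $X=0$ gives $\mathfrak{n}(D_1)\cong\mathfrak{n}(D_2)$.

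The only point that needs care is the direction (2)$\Rightarrow$(1). There we must check that the lower-right entry matches: it does because both $D_i$ lie in $\Tilde{\delta}^+(\mathfrak{n})$, so their $(3,3)$ entries are the traces of their $\mathfrak{m}$-blocks, and conjugating by a block-diagonal automorphism and scaling respect this. No appeal to the uniqueness statement of Lemma 3.9 is needed, since the block-diagonal extension already lands exactly on $D_1$. Along the way one checks that $\lambda g^{-1}D_2g$ is again a derivation. This is automatic, because conjugation by an automorphism preserves $\delta(\mathfrak{n})$.
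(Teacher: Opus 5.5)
Your proof is correct and follows the paper's route. The paper's argument consists only of the two commutative diagrams $\pi\circ D=D_{\mathfrak{m}}\circ\pi$ and $\pi\circ g=g_{\mathfrak{m}}\circ\pi$ combined with Theorem 3.2, and you supply the details it leaves implicit: $\pi\circ\mathrm{ad}(X)=0$ for (1)$\Rightarrow$(2), and for the converse the lift $\mathrm{diag}(g_{\mathfrak{m}},\det g_{\mathfrak{m}})\in\mathrm{Aut}(\mathfrak{n})$ together with $\mathrm{tr}(D_1)_{\mathfrak{m}}=\lambda\,\mathrm{tr}(D_2)_{\mathfrak{m}}$, which makes the $(3,3)$ entries agree. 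The remaining issues are cosmetic. The lemmas you cite as 3.8(2) and 3.9 are Lemma~\ref{lem:n3}(2) and Lemma~\ref{lem:n3-1} in the paper's numbering. Your opening claim that you use the uniqueness lemma also contradicts your correct closing remark that it is not needed.
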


When $D_1,D_2\in \Tilde{\delta}^+(\mathfrak{n})$ satisfy (2), $D_1$ and $D_2$ are called B-equivalent, we write $D_1\stackrel{B}{\sim} D_2$.
Furthermore, from Lemma \ref{lem:3b}, the following holds.
\[
\Tilde{\delta}^+(\mathfrak{n})/\stackrel{O}{\sim}\ = \Tilde{\delta}^+(\mathfrak{n})/\stackrel{B}{\sim}.
\]
When classifying four-dimensional SNC Lie algebras in the case where the derived algebra $\mathfrak{n}$ is non-aberian, it was shown that it is sufficient to consider the quotient set by B-equivalent over $\Tilde{\delta}^+(\mathfrak{n})$. Therefore, the following holds.
\begin{prop}[cf. \cite{gs}]\label{prop:3b}
The representation matrix of $D\in\delta^+(\mathfrak{n})$ concerning with $e_1,e_2,e_3$ is conjugate to one of the following.
\[
  \left(
  \begin{array}{ccc}
   1-x &0&0 \\
   0&x&0\\
   0&0&1
   \end{array}
   \right), 
 \left(
  \begin{array}{ccc}
   \frac{1}{2} &1&0 \\
   0&\frac{1}{2}&0\\
   0&0&1
   \end{array}
   \right), 
 \left(
  \begin{array}{ccc}
   \frac{1}{2}&-\alpha&0 \\
   \alpha&\frac{1}{2}&0\\
   0&0&1
   \end{array}
   \right),
\]
where $0<x< \frac{1}{2}, 0<\alpha$.
\end{prop}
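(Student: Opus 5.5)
By Lemma \ref{lem:n3-1} we may replace $D$ by its normal form $\Tilde{D}\in\Tilde{\delta}^+(\mathfrak{n})$, which is determined by the $2\times 2$ block $D_{\mathfrak{m}}$; the lower right entry is then $\mathrm{tr}(D_{\mathfrak{m}})$. By Lemma \ref{lem:3b}, two such derivations give isomorphic expanded algebras exactly when their blocks satisfy $(D_1)_{\mathfrak{m}}=\lambda g_{\mathfrak{m}}^{-1}(D_2)_{\mathfrak{m}}g_{\mathfrak{m}}$ with $g_{\mathfrak{m}}\in\mathrm{GL}(\mathfrak{m})$ and $\lambda\neq0$. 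So the task is a classification of real $2\times2$ matrices $D_{\mathfrak{m}}$ whose eigenvalues have positive real part, up to conjugation and nonzero rescaling. Any $g_{\mathfrak{m}}\in\mathrm{GL}(\mathfrak{m})$ lifts to $\mathrm{diag}(g_{\mathfrak{m}},\det g_{\mathfrak{m}})\in\mathrm{Aut}(\mathfrak{n})$ by Lemma \ref{lem:n3}(2). This lift preserves the form (3.3), so conjugating the block gives an honest conjugation of $D$ by an automorphism.

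The steps run as follows.
\begin{enumerate}
\item[(i)] Since $\mathrm{tr}(D_{\mathfrak{m}})>0$, rescale by $\lambda=1/\mathrm{tr}(D_{\mathfrak{m}})$ so that $\mathrm{tr}(D_{\mathfrak{m}})=1$. This makes the $(3,3)$ entry equal to $1$, and afterwards only $\lambda=1$ is allowed, since the trace must stay $1$.
\item[(ii)] Apply the real Jordan form to $D_{\mathfrak{m}}$, which has trace $1$ and eigenvalues of positive real part. There are three possibilities.
\begin{enumerate}
\item[(a)] Two real eigenvalues $x\le 1-x$ with $D_{\mathfrak{m}}$ diagonalizable. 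Permuting $e_1,e_2$ (a lift of a permutation matrix) gives $\mathrm{diag}(1-x,x)$ with $0<x\le\frac12$.
\item[(b)] A repeated eigenvalue $\frac12$ with a nontrivial Jordan block. This gives the second matrix; scaling one basis vector normalizes the off-diagonal entry to $1$.
\item[(c)] Complex eigenvalues $\frac12\pm i\alpha$. Conjugating by $\mathrm{diag}(1,-1)$ if needed, we can take $\alpha>0$, which gives the third matrix.
\end{enumerate}
\end{enumerate}

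The point needing care is the boundary value $x=\frac12$ in case (a). Then $D_{\mathfrak{m}}=\frac12 I$ has trace $1$, so I would check that this scalar case is genuinely included. I suspect it is meant to be covered by $0<x\le\frac12$, and that the strict inequality in the statement is a typographical slip, or else the case has been absorbed elsewhere. In that case I would state the proof for $0<x\le\frac12$. I would also note, from Lemma \ref{lem:3b}, that distinct parameters are inequivalent: with trace fixed at $1$, the characteristic polynomial (equivalently the eigenvalues) is a conjugacy invariant, and diagonalizability separates the Jordan block in (b) from the scalar case.

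Finally, $\delta^+$ membership is verified by Lemma \ref{lem:n3}(1). Each listed matrix has block eigenvalues with positive real part and trace $1>0$.
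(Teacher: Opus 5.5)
Your argument is correct and follows the same route as the paper. You reduce to $\Tilde{\delta}^+(\mathfrak{n})$ via Lemma~\ref{lem:n3-1}, pass to B-equivalence of the $2\times 2$ block via Lemma~\ref{lem:3b} (realized by the lifts $\mathrm{diag}(g_{\mathfrak{m}},\det g_{\mathfrak{m}})\in\mathrm{Aut}(\mathfrak{n})$), normalize $\mathrm{tr}(D_{\mathfrak{m}})=1$, and read off the real Jordan forms; reading ``conjugate'' as conjugation together with rescaling is also what the paper intends. Your boundary remark is right as well: with $0<x<\frac12$ and $0<\alpha$ the list omits the scalar block $\frac12 I$, so the range should be $0<x\le\frac12$, which is what the paper itself uses later (Section 5 takes $0<x\le\frac12$, and the symmetric space (6.3) is exactly the case $x=\frac12$).
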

From the above, SNC-algebras of (B) can be classified as follows.
   \begin{thm}[\cite{gs} Theorem 3.5]
Let $\mathfrak{g}$ be an SNC-algebra of dimension four with non-abelian derived algebra $\mathfrak{n}=[\mathfrak{g},\mathfrak{g}]$, which has an orthonormal basis $e_1,e_2,e_3$ with the relations
\[
[e_1,e_2]=e_3,\ [e_2,e_3]=[e_3,e_1]=0 .
\]
Then there exists $e_4\in \mathfrak{g}\backslash\mathfrak{n}$ whose bracket operations are given by one of the following.\ 
\begin{enumerate}
\item[(1)]
    $ [e_4,e_1]=(1-x)e_1,\ [e_4,e_2] =xe_2,\ [e_4,e_3]=e_3,\ 0<x< \frac{1}{2}$.

\item[(2)]
   $ [e_4,e_1]=\frac{1}{2}e_1,\ [e_4,e_2] =e_1+\frac{1}{2}e_2,\ [e_4,e_3]=e_3$.
\item[(3)]
   $ [e_4,e_1]=\frac{1}{2}e_1+\alpha e_2,\ [e_4,e_2] =-\alpha e_1+\frac{1}{2}e_2,\   [e_4,e_3]=e_3,\ 0<\alpha$.
 \end{enumerate}
 \end{thm}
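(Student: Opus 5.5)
The plan is to reduce the statement to Proposition \ref{prop:3b} by way of Theorem \ref{thm:douti} and Lemmas \ref{lem:n3-1} and \ref{lem:3b}.

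\textbf{Step 1: setting up $\mathfrak{g}$ as an expanded algebra.} Since $\mathfrak{g}$ is an SNC-algebra, Theorem \ref{thm:douti} gives $\mathfrak{g}\cong\mathfrak{n}(D)$, where $\mathfrak{n}=[\mathfrak{g},\mathfrak{g}]$ is the given three-dimensional Heisenberg algebra and $D=\mathrm{ad}A|_{\mathfrak{n}}\in\delta^+(\mathfrak{n})$. Concretely, Theorem \ref{thm:h3} supplies an element $A\in\mathfrak{g}\setminus\mathfrak{n}$ with this property.

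\textbf{Step 2: normalizing $D$.} By Lemma \ref{lem:n3-1}, there is $g\in\mathrm{Aut}(\mathfrak{n})$ with $gDg^{-1}=\Tilde D\in\Tilde\delta^+(\mathfrak{n})$. Lemma \ref{lem:3b} then says that the isomorphism class of $\mathfrak{n}(\Tilde D)$ depends only on the B-equivalence class of $\Tilde D_{\mathfrak{m}}$, that is, on $\Tilde D_{\mathfrak{m}}$ up to conjugation in $\mathrm{GL}(\mathfrak{m})$ and nonzero scaling. Proposition \ref{prop:3b} lists the representatives $\mathrm{diag}(1-x,x,1)$, the Jordan block with diagonal $\tfrac12$, and the rotation-type block with real part $\tfrac12$. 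So there exist $g'\in\mathrm{Aut}(\mathfrak{n})$ and $\lambda>0$ such that $\lambda\, g' D g'^{-1}$ is one of these three matrices. Its $(3,3)$ entry is automatically $\mathrm{tr}(D_{\mathfrak{m}})=1$.

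\textbf{Step 3: building $e_4$ and reading off brackets.} Set $e_4=\lambda A$, adjusted by an element of $\mathfrak{n}$ if necessary, and transport the basis through $g'$. The brackets $[e_4,e_i]$ are then given by the columns of the normalized matrix, which yields exactly the three types (1)--(3). The point to check is that the orthonormal basis $e_1,e_2,e_3$ with $[e_1,e_2]=e_3$ is preserved. The statement only fixes the structure constants of $\mathfrak{n}$, and any automorphism of the form (3.2) maps a basis satisfying $[e_1,e_2]=e_3$ to another such basis. So we may redefine the $e_i$ as the images $g'^{-1}e_i$; if orthonormality is required, we choose the inner product so that the new basis is orthonormal, which is allowed because the inner product was arbitrary.

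\textbf{Main obstacle: the sign and range normalizations behind Proposition \ref{prop:3b}.} I would rely on the proposition as stated, but would verify the normalizations it implicitly uses. For real eigenvalues $a\le b$ of $D_{\mathfrak{m}}$, both positive, scale so that $a+b=1$. In the diagonalizable case with $a<b$ this gives $\mathrm{diag}(1-x,x)$ with $0<x<\tfrac12$ after reordering $e_1,e_2$; the swap is an automorphism only up to the sign $y_{11}y_{22}-y_{12}y_{21}=-1$ on $e_3$, which is harmless. The case $a=b$ with $D_{\mathfrak{m}}$ diagonalizable gives $\mathrm{diag}(\tfrac12,\tfrac12)$, which is the $x=\tfrac12$ endpoint and must be absorbed somewhere. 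I expect it to be excluded from or merged into type (1) through the proposition's ranges, so I would double-check that point. For complex eigenvalues, scale the real part to $\tfrac12$, and use the automorphism $e_2\mapsto -e_2$, $e_3\mapsto -e_3$ to make $\alpha>0$.
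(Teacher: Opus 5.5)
Your route is the paper's route. The paper gives no separate argument for this theorem; it simply chains Theorem \ref{thm:douti}, Lemma \ref{lem:n3-1}, Lemma \ref{lem:3b} and Proposition \ref{prop:3b}, as you do.

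\textbf{The gap.} It is the point you flagged and then left open: the case where $D_{\mathfrak{m}}$ is scalar. This case is neither excluded nor absorbed into types (1)--(3).

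It is not excluded. The map $D=\mathrm{diag}(\frac12,\frac12,1)$ has the form (3.1), its $(3,3)$-entry equals $\mathrm{tr}(D_{\mathfrak{m}})$, and its eigenvalues are positive. So $D\in\delta^+(\mathfrak{n})$, and $\mathfrak{n}(D)$ is an SNC-algebra; it is the $\mathbb{C}H^2$ algebra the paper itself uses in (6.3).

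It is not absorbed. Take any $e_4'=cA+X$ with $c\neq 0$ and $X\in\mathfrak{n}$. Then $\mathrm{ad}\,e_4'|_{\mathfrak{n}}=cD+\mathrm{ad}X$, and $\mathrm{ad}X$ takes values in $[\mathfrak{n},\mathfrak{n}]$. Hence in every basis $e_1',e_2',e_3'$ of $\mathfrak{n}$ with $[e_1',e_2']=e_3'$, the upper-left $2\times 2$ block of $\mathrm{ad}\,e_4'|_{\mathfrak{n}}$ is $\frac{c}{2}\,\mathrm{id}$. By contrast, the corresponding blocks are never scalar in type (1) with $x<\frac12$ (distinct eigenvalues $1-x\neq x$), in type (2) (a nontrivial Jordan block), or in type (3) (non-real eigenvalues, since $\alpha>0$). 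Equivalently, by Lemma \ref{lem:3b}, the B-equivalence class of a scalar matrix consists only of scalar matrices.

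\textbf{Consequence.} The statement as written, with $0<x<\frac12$, is false, and Proposition \ref{prop:3b} as printed has the same omission. Invoking it ``as stated'' is exactly the step that fails, so your hedge that the endpoint is ``excluded from or merged into type (1)'' cannot be made to work. The correct range in (1) is $0<x\le\frac12$. This is what the paper actually uses afterwards: Section 5 treats type (1) of this theorem with $0<x\le\frac12$, and (6.3) is its $x=\frac12$ case.

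\textbf{With the corrected range, your argument is complete.} Your normalization paragraph then covers every case:
\begin{itemize}
\item distinct positive real eigenvalues give $0<x<\frac12$;
\item equal real eigenvalues give $x=\frac12$ when $D_{\mathfrak{m}}$ is diagonalizable, and type (2) otherwise;
\item non-real eigenvalues give type (3) after the sign change $e_2\mapsto -e_2$, $e_3\mapsto -e_3$.
\end{itemize}
The remaining steps are fine: taking $\lambda>0$, the swap $e_1\leftrightarrow e_2$ with $e_3\mapsto -e_3$, and extending $g_{\mathfrak{m}}$ to an automorphism by $\det g_{\mathfrak{m}}$ on $e_3$.
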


\section{Classification of SNC-algebras in dimension five}
In this section, we classify SNC-algebras in dimension five. Let $\mathfrak{g}$ be an SNC-algebra in dimension five. From Theorem \ref{thm:douti}, there exists an expanded Lie algebra $\mathfrak{n}(D)$ that is isomorphic to $\mathfrak{g}$. Here, $\mathfrak{n}$ is a four-dimensional nilpotent Lie algebra, and from Magnin\cite{m}, the following three types exist.
    \begin{enumerate}
\item[(A)]$\mathfrak{n}$ is abelian.
\item[(B)]$\mathfrak{n}$ has a basis $e_1,e_2,e_3,e_4$ with the relations, 
\[
[e_1,e_2]=e_3,\ [e_1,e_3]=e_4,\ \text{others are zero.}
\]
\item[(C)]$\mathfrak{n}$ has a basis $e_1,e_2,e_3,e_4$ with the relations, 
\[
[e_1,e_2]=e_4,\ \text{others are zero.}
\]
\end{enumerate}
\subsection{Case of (A)}
In the case where $\mathfrak{n}$ is abelian, it can be classified immediately from Lemma \ref{lem:ab}.
\begin{prop}
The representation matrix of $D\in\delta^+(\mathfrak{n})$ concerning with $e_1,e_2,e_3,e_4$ is conjugate to one of the following.
\[
 \left(
  \begin{array}{cccc}
     x_1 & 0 & 0 & 0\\
     0 & x_2 & 0 & 0\\
     0 & 0 & x_3 & 0\\
     0 & 0 & 0 & 1\\  
  \end{array}
 \right),\ 
 \left(
  \begin{array}{cccc}
     y_1 & 0 & 0 & 0\\
     0 & y_2 & 0 & 0\\
     0 & 0 & 1 & 1\\
     0 & 0 & 0 & 1\\  
  \end{array}
 \right),\ 
 \left(
 \begin{array}{cccc}
     y_1 & 0 & 0 & 0\\
     0 & y_2 & 0 & 0\\
     0 & 0 &  1 & -\beta\\
     0 & 0 & \beta &1\\  
  \end{array}
 \right)\]\[
 \left(
  \begin{array}{cccc}
     y_1 & 0 & 0 & 0\\
     0 & 1 & 1 & 0\\
     0 & 0 & 1 & 1\\
     0 & 0 & 0 & 1\\  
  \end{array}
 \right),\ 
 \left(
  \begin{array}{cccc}
     y_1 & 1 & 0 & 0\\
     0 & y_1 & 0 & 0\\
     0 & 0 & 1 & 1\\
     0 & 0 & 0 & 1\\  
  \end{array}
 \right),\ 
 \left(
 \begin{array}{cccc}
     y_1 & 1 & 0 & 0\\
    0 &y_1 & 0 & 0\\
     0 & 0 &  1 & -\beta\\
     0 & 0 & \beta &1\\  
  \end{array}
 \right),\]\[
 \left(
 \begin{array}{cccc}
     \alpha & -\beta & 0 & 0\\
     \beta &\alpha & 0 & 0\\
     0 & 0 &  1 & -\beta'\\
     0 & 0 & \beta' &1\\  
  \end{array}
 \right),\ 
 \left(
  \begin{array}{cccc}
     1 & 1 & 0 & 0\\
     0 & 1 & 1 & 0\\
     0 & 0 & 1 & 1\\
     0 & 0 & 0 & 1\\  
  \end{array}
 \right), \left(
 \begin{array}{cccc}
     1 & -\beta & 1 & 0\\
     \beta &1 & 0 & 1\\
     0 & 0 &  1 & -\beta\\
     0 & 0 & \beta &1\\  
  \end{array}
 \right), 
\]
where,\ $0<x_1\le x_2\le x_3\le 1,\ 0<y_1\le y_2,\ 0<\alpha,\beta,\beta'$. 
\end{prop}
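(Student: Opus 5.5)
Since $\mathfrak{n}$ is abelian, $\delta(\mathfrak{n})=\mathrm{End}(\mathfrak{n})$ and $\mathrm{Aut}(\mathfrak{n})=\mathrm{GL}(\mathfrak{n})$. Lemma \ref{lem:ab} (its argument does not depend on $\dim\mathfrak{n}=3$) therefore says that $\mathfrak{n}(D_1)\cong\mathfrak{n}(D_2)$ exactly when $D_1=\lambda g^{-1}D_2g$ for some $g\in\mathrm{GL}(\mathfrak{n})$ and $\lambda\neq 0$. The classification thus reduces to listing the real $4\times 4$ matrices whose eigenvalues all have positive real part, up to conjugation and nonzero rescaling. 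Because every eigenvalue has positive real part, I only use $\lambda>0$: a negative $\lambda$ would leave $\delta^+(\mathfrak{n})$.

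\textbf{Step 1 (real Jordan form).} Take the real Jordan normal form of $D$. The possible block structures are sorted by the multiset of eigenvalues and Jordan block sizes:
\begin{enumerate}
\item[(i)] four real eigenvalues with all blocks of size one;
\item[(ii)] one block of size $2$ and two of size $1$;
\item[(iii)] two blocks of size $2$;
\item[(iv)] blocks of sizes $3$ and $1$;
\item[(v)] a single block of size $4$;
\item[(vi)] one complex pair (a $2\times2$ rotation-scaling block) with two real size-one blocks;
\item[(vii)] a complex pair with a real size-two block;
\item[(viii)] two complex pairs;
\item[(ix)] one complex pair with a Jordan block of size two over $\mathbb{C}$, i.e.\ the $4\times4$ real block with an identity coupling.
\end{enumerate}
Reordering the basis vectors is a conjugation by a permutation, so I may order the blocks freely.

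\textbf{Step 2 (normalize by $\lambda$).} Choose $\lambda>0$ so that a distinguished real part equals $1$:
\begin{itemize}
\item in case (i), divide by the largest eigenvalue and order the others to get $0<x_1\le x_2\le x_3\le 1$;
\item in the cases with a nontrivial Jordan block or complex pair, scale so that the eigenvalue (or real part) attached to the block placed in the lower right becomes $1$;
\item with two complex pairs, scale one real part to $1$ and call the other $\alpha$.
\end{itemize}
Conjugating by a diagonal matrix then rescales the off-diagonal $1$'s of a Jordan block back to $1$, and conjugating by a reflection makes each imaginary part $\beta,\beta'$ positive. The remaining eigenvalues become the free parameters $y_1,y_2$, and symmetry arguments give orderings such as $y_1\le y_2$. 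Matching each of (i)--(ix) with the nine listed matrices in order, (ix) corresponds to the last matrix.

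\textbf{Main obstacle.} The hard part is deciding which block receives the normalization $1$ when there are two blocks of the same shape. This happens in (iii), where one scales one block's eigenvalue to $1$ and the other becomes $y_1$, and in (viii), where one scales one real part to $1$ and the other becomes $\alpha$. There is an ambiguity in which block is scaled, so I will fix it by ordering (e.g.\ requiring $y_1\le 1$ in (iii) when the roles can be swapped), or else accept that the parameter ranges in the statement are not yet irredundant. The statement only claims that every $D$ is conjugate to one of the listed forms, not that the list is irredundant, so it suffices to check existence. This I would do case by case with the conjugations above, appealing to uniqueness of the real Jordan form only to see that the cases are exhaustive.
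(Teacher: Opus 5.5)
Your proof is correct and follows the same route as the paper, which simply invokes Lemma \ref{lem:ab} (conjugation together with nonzero rescaling) and the real Jordan form. Your case analysis spells out what the paper leaves implicit, and you are right that the statement only needs existence, not irredundancy. One small slip: your cases do not match the listed matrices ``in order''; read in the order the matrices are listed, the correspondence is (i), (ii), (vi), (iv), (iii), (vii), (viii), (v), (ix).
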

To summarize the above discussion,  the case of (A) can be classified as follows.
\begin{thm}
Let $\mathfrak{g}$ be an SNC-algebra of dimension five with abelian derived algebra $\mathfrak{n}=[\mathfrak{g},\mathfrak{g}]$, and let $e_1,e_2,e_3,e_4$ be an arbitrarily orthonormal basis of $\mathfrak{n}$. Then there exists \ $e_5\in \mathfrak{g}\backslash\mathfrak{n}$ whose bracket operations are given by one of the following.\ 
\begin{enumerate}
\item[(1)]
    $ [e_5,e_1]=x_1e_1,\ [e_5,e_2] =x_2e_2,\ [e_5,e_3] =x_3e_3,\ [e_5,e_4]=e_4,\ 0<x_1\le x_2\le x_3\le 1$.
\item[(2)]
  $ [e_5,e_1]=y_1e_1,\ [e_5,e_2] =y_2e_2,\ [e_5,e_3] =e_3,\ [e_5,e_4]=e_3+e_4,\ 0<y_1\le y_2$.
\item[(3)]
 $ [e_5,e_1]=y_1e_1,\ [e_5,e_2] =y_2e_2,\ [e_5,e_3] =e_3+\beta e_4,\ [e_5,e_4]=-\beta e_3+e_4,0<y_1\le y_2,\ 0<\beta$.
\item[(4)]
$ [e_5,e_1]=y_1e_1,\ [e_5,e_2] =e_2,\ [e_5,e_3] =e_2+e_3,\ [e_5,e_4]=e_3+e_4,\ 0<y_1$.
 \item[(5)]
$ [e_5,e_1]=y_1e_1,\ [e_5,e_2] =e_1+y_1e_2,\ [e_5,e_3] =e_3,\ [e_5,e_4]=e_3+e_4,\ 0<y_1$.
\item[(6)]
$ [e_5,e_1]=y_1e_1,\ [e_5,e_2] =e_1+y_1e_2,\ [e_5,e_3] =e_3+\beta e_4,\ [e_5,e_4]=-\beta e_3+e_4,\ 0<y_1,\ 0<\beta$.
\item[(7)]
$ [e_5,e_1]=\alpha e_1+\beta e_2,\ [e_5,e_2] =-\beta e_1+\alpha e_2,\ [e_5,e_3] =e_3+\beta' e_4,\ [e_5,e_4]=-\beta' e_3+e_4,\ 0<\alpha, \beta, \beta'$.
\item[(8)]
$ [e_5,e_1]=e_1,\ [e_5,e_2] =e_1+e_2,\ [e_5,e_3] =e_2+e_3,\ [e_5,e_4]=e_3+e_4$.
\item[(9)]
$ [e_5,e_1]=e_1+\beta e_2,\ [e_5,e_2] =-\beta e_1+e_2,\ [e_5,e_3] =e_1+e_3+\beta' e_4,\ [e_5,e_4]=e_2-\beta' e_3+e_4,\ 0<\beta, \beta'$.
\end{enumerate}
Any two Lie algebras in different types or with different parameters are not isomorphic to each other.
\end{thm}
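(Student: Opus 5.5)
The plan is to reduce the theorem to the preceding Proposition (the list of nine normal forms for $D\in\delta^+(\mathfrak{n})$ when $\mathfrak{n}$ is four-dimensional abelian), using Theorem \ref{thm:douti} and Lemma \ref{lem:ab}. Let $\mathfrak{g}$ be a five-dimensional SNC-algebra with abelian $\mathfrak{n}=[\mathfrak{g},\mathfrak{g}]$. By Theorem \ref{thm:douti} there is $D\in\delta^+(\mathfrak{n})$ with $\mathfrak{g}\cong\mathfrak{n}(D)$; concretely, we may take $D=\mathrm{ad}A|_{\mathfrak{n}}$ for some $A\notin\mathfrak{n}$. Since $\mathfrak{n}$ is abelian, $\delta(\mathfrak{n})=\mathrm{End}(\mathfrak{n})$ and $\mathrm{Aut}(\mathfrak{n})=\mathrm{GL}(\mathfrak{n})$. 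Lemma \ref{lem:ab} then says that $\mathfrak{n}(D)$ depends, up to isomorphism, only on the class of $D$ under $D\mapsto\lambda g^{-1}Dg$ with $g\in\mathrm{GL}(\mathfrak{n})$ and $\lambda\neq0$.

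For existence, I first put $D$ in real Jordan form. Its eigenvalues are either real positive or come in conjugate pairs $a\pm ib$ with $a>0$, so the possible block patterns are: four $1\times1$ blocks; two $1\times1$ blocks and one $2\times2$ block (real or complex); one $1\times1$ and one $3\times3$ block; two $2\times2$ blocks (any mix of real and complex); or one $4\times4$ block (real, or a doubled complex pair). I then rescale by $\lambda>0$ so that a distinguished eigenvalue becomes $1$. Using a permutation in $\mathrm{GL}(\mathfrak{n})$, I order the blocks so that the largest real eigenvalue, or the nontrivial block, is normalized; the sign of $\beta$ can be flipped by the conjugation $e_2\mapsto -e_2$. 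This produces exactly the nine matrices of the Proposition. Choosing $g$ to carry the Jordan basis to the given orthonormal basis $e_1,\dots,e_4$, and setting $e_5=\lambda^{-1}g^{-1}$-transported $A$, i.e.\ $e_5$ the corresponding preimage of $A_D$, gives the listed brackets $[e_5,e_i]=De_i$. Orthonormality is harmless here: the basis only needs to be a basis, because every element of $\mathrm{GL}(\mathfrak{n})$ is an automorphism.

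For non-isomorphism, suppose $\mathfrak{n}(D_1)\cong\mathfrak{n}(D_2)$ with $D_1,D_2$ from the list. By Lemma \ref{lem:ab}, $D_1=\lambda g^{-1}D_2g$, so the Jordan type (block sizes, and whether blocks are real or complex) is an invariant, which separates the nine types. Within a type, the spectrum is determined up to a common scalar $\lambda$. Since all real parts are positive, $\lambda>0$; taking $\lambda<0$ would make the real parts negative. The normalization (the eigenvalue attached to the normalized block equals $1$, plus the ordering $x_1\le x_2\le x_3\le1$ or $y_1\le y_2$, and $\beta>0$) then pins down $\lambda=1$ and the parameters.

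The main obstacle is bookkeeping in this last step. One must fix normalization conventions that are genuinely canonical. For example, in type (7) one has to decide which complex pair gets real part $1$, and should arguably impose an ordering such as $\alpha\le1$ or a tie-break on $\beta,\beta'$. In type (9) the scaling must force the real part to equal $1$ and leave $\beta$ free; there the listed $\beta'$ should in fact coincide with $\beta$, which I would check. I would go through each type, compute the invariants (eigenvalues modulo positive scaling, together with the Jordan structure), and verify that they determine the listed parameters uniquely, adjusting the parameter ranges where the statement's conventions turn out not to be canonical.
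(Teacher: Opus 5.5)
Your route is the paper's own. Theorem \ref{thm:douti} and Lemma \ref{lem:ab} reduce the problem to classifying $D\in\mathrm{End}(\mathfrak n)$ with all eigenvalues having positive real part, up to $D\mapsto\lambda g^{-1}Dg$. The real Jordan form plus a positive rescaling then gives the list; the paper says nothing beyond this, via the preceding Proposition.

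Your existence half is fine, including your reading of the statement up to isomorphism. Read literally, with $e_1,\dots,e_4$ fixed inside $\mathfrak g$, it would fail, since $\mathrm{ad}(e_5)|_{\mathfrak n}$ is a scalar multiple of $D$ for every $e_5\notin\mathfrak n$.

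The gap is in the uniqueness half, at the claim that the normalization ``pins down $\lambda=1$ and the parameters''. This fails whenever two blocks of the same kind can trade places. That happens in type (5), which you did not flag, as well as in (7), which you did. Write $J_2(a)=\begin{pmatrix}a&1\\0&a\end{pmatrix}$.
\begin{itemize}
\item Type (5): for $y_1=2$ the matrix is $D_1=J_2(2)\oplus J_2(1)$, and $\tfrac12 D_1$ has Jordan blocks $J_2(1)$ and $J_2(\tfrac12)$. Swapping the blocks shows $\tfrac12D_1$ is conjugate to $J_2(\tfrac12)\oplus J_2(1)$, the type (5) matrix for $y_1=\tfrac12$. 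By Lemma \ref{lem:ab}, $y_1$ and $1/y_1$ give isomorphic algebras.
\item Type (7): rescaling by $1/\alpha$ and swapping the two complex pairs shows that $(\alpha,\beta,\beta')$ and $(1/\alpha,\beta'/\alpha,\beta/\alpha)$ give isomorphic algebras.
\end{itemize}

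Your suspicion about (9) is also right, and it breaks your other claim, that Jordan type separates the nine types. Set $C_b=\begin{pmatrix}1&-b\\ b&1\end{pmatrix}$. If $\beta\neq\beta'$, the type (9) matrix has the four distinct eigenvalues $1\pm i\beta$, $1\pm i\beta'$. So it is semisimple with real Jordan form $C_\beta\oplus C_{\beta'}$, i.e.\ that algebra is the type (7) algebra with $\alpha=1$.

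So the last sentence of the theorem is false as written, and no proof can establish it. This is not bookkeeping to defer; your write-up must state the correction:
\begin{itemize}
\item in (5), $0<y_1\le 1$;
\item in (7), $0<\alpha\le 1$, together with $\beta\le\beta'$ when $\alpha=1$;
\item in (9), $\beta'=\beta$.
\end{itemize}
The paper's own Proposition already lists the ninth matrix with a single $\beta$, so the $\beta'$ in (9) is a transcription slip. The missing ranges in (5) and (7), however, are genuine omissions that the paper's one-line appeal to Lemma \ref{lem:ab} and the Jordan form passes over.

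With these ranges, your invariant (Jordan type together with the spectrum modulo positive scaling) does determine the parameters. In (1)--(4), (6), (8), (9) the block to be normalized is intrinsic:
\begin{itemize}
\item the largest eigenvalue in (1);
\item the unique nontrivial real Jordan block in (2) and (4);
\item the complex block in (3) and (6);
\item the single block in (8) and (9).
\end{itemize}
In (5) and (7), the added inequalities select one of the two possible normalizations.
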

\subsection{Case of (B)}
$\mathfrak{n}$ has an orthonormal basis $e_1,e_2,e_3,e_4$ with the relations, 
\[
[e_1,e_2]=e_3,\ [e_1,e_3]=e_4,\ \text{others are zero.}
\]
Let $\mathfrak{m}$ be a linear subspace of $\mathfrak{n}$ spanned by $e_1,e_2$, and let $\pi$ be an orthogonal projection from $\mathfrak{n}$ onto $\mathfrak{m}$.
Furthermore, an element of $\mathrm{End}(\mathfrak{n})$ and its representation matrix with respect to the basis $e_1, e_2, e_3, e_4$ are identified.
Under this basis, $\delta^+(\mathfrak{n})$ and $\mathrm{Aut}(\mathfrak{n})$ satisfy the following.
\begin{lem}
\begin{enumerate}
\item[(1)]
The representation matrix of $ D\in \delta^+(\mathfrak{n})$ concerning with $e_1,e_2,e_3,e_4$ takes the following form.
 \begin{align*}
  D=
  \left(
  \begin{array}{cccc}
     x_{11} &x_{12} & 0&0 \\
     x_{21} &x_{22} & 0&0 \\
     x_{31} &x_{32} &x_{11}+x_{22}&0\\
     x_{41} &x_{42} &x_{32}&2x_{11}+x_{22}
  \end{array}
 \right).\tag{4.1}
 \end{align*}
In particular, when
  \[
  D_\mathfrak{m}=\pi\circ D|_{\mathfrak{m}}
 \]
is given, $\mathrm{tr}(D_\mathfrak{m})>0$ holds, and all eigenvalues of $D_\mathfrak{m}$ have positive real parts.
 On the contrary, if the representation matrix of $ D\in \mathrm{End}(\mathfrak{n})$ concerning with $e_1,e_2,e_3,e_4$ satisfies equation (4.1),\ each eigenvalue of $D_\mathfrak{m}=\pi\circ D|_{\mathfrak{m}}$ has a positive real part, and $\mathrm{tr}(D_\mathfrak{m})>0$,  then $D\in \delta^+(\mathfrak{n})$.
 \item[(2)]The representation matrix of $ A\in \mathrm{Aut}(\mathfrak{n})$ concerning with $e_1,e_2,e_3,e_4$ takes the following form.
 \begin{align*}
 A=
 \left(
  \begin{array}{cccc}
     y_{11} &y_{12} & 0&0 \\
     y_{21} &y_{22} & 0&0 \\
     y_{31} &y_{32} &y_{11}y_{22}-y_{12}y_{21}&0\\
     y_{41} &y_{42} &y_{11}y_{32}-y_{12}y_{31}&y_{11}(y_{11}y_{22}-y_{12}y_{21})
  \end{array}
  \right).\tag{4.2}
 \end{align*}
 In particular, when
  \[
  A_\mathfrak{m}=\pi\circ A|_{\mathfrak{m}},
 \]
$\det(A_\mathfrak{m})\neq 0$ is satisfied.\\ 
 On the contrary, if the representation matrix of $ A\in \mathrm{GL}(\mathfrak{n})  $concerning with $e_1,e_2,e_3.e_4$ satisfies equation (4.2), then $A\in \mathrm{Aut}(\mathfrak{n})$.
 \end{enumerate}
  \end{lem}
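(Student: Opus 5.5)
The plan is to compute $\delta(\mathfrak{n})$ and $\mathrm{Aut}(\mathfrak{n})$ directly from the brackets $[e_1,e_2]=e_3$, $[e_1,e_3]=e_4$, using the lower central series, in the same way as Lemma \ref{lem:n3} is obtained for the three-dimensional case. I write $D(e_j)=\sum_i x_{ij}e_i$ throughout.

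\emph{Step 1 (block shape).} The ideals $C^1\mathfrak{n}=[\mathfrak{n},\mathfrak{n}]=\mathrm{span}(e_3,e_4)$ and $C^2\mathfrak{n}=\mathrm{span}(e_4)$ are characteristic, so they are preserved by every derivation and every automorphism. This gives $x_{13}=x_{23}=0$ and $x_{14}=x_{24}=x_{34}=0$, and the same zeros for $A$. Hence the upper-right $2\times2$ block and the $(3,4)$ entry vanish.

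\emph{Step 2 (third and fourth columns).} I apply the Leibniz rule to the defining brackets. From $[e_1,e_2]=e_3$,
\[
De_3=[De_1,e_2]+[e_1,De_2]=(x_{11}+x_{22})e_3+x_{32}e_4 .
\]
From $[e_1,e_3]=e_4$,
\[
De_4=[De_1,e_3]+[e_1,De_3]=(2x_{11}+x_{22})e_4 .
\]
These reproduce exactly the third and fourth columns of (4.1). For automorphisms, $Ae_3=[Ae_1,Ae_2]$ and $Ae_4=[Ae_1,Ae_3]$ yield the third and fourth columns of (4.2) in the same way. Invertibility of $A$ forces the product $y_{11}(y_{11}y_{22}-y_{12}y_{21})$ to be nonzero, and in particular $\det(A_\mathfrak{m})\neq0$.

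\emph{Step 3 (remaining relations).} I must also check the relations $[e_2,e_3]=0$, $[e_1,e_4]=0$, $[e_2,e_4]=0$ and $[e_3,e_4]=0$. This is the step I expect to be the real obstacle. The relation $[e_2,e_3]=0$ gives
\[
0=[De_2,e_3]+[e_2,De_3]=x_{12}e_4 .
\]
So every derivation in fact has $x_{12}=0$, and analogously every automorphism has $y_{12}=0$. Thus every derivation and automorphism has the form (4.1), resp. (4.2), and the forward implications hold, with $x_{12}=0$, $y_{12}=0$ as additional information.

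For the converse as worded, I would verify the Leibniz identity, resp. the homomorphism identity, on all basis pairs for a general matrix of the form (4.1), resp. (4.2). Steps 1--2 show that every pair except $(e_2,e_3)$ is satisfied automatically. The pair $(e_2,e_3)$ holds precisely when $x_{12}=0$, resp. $y_{12}=0$. So the converse needs this entry to vanish, and I would state it with that proviso, since a matrix of the form (4.1) with $x_{12}\neq0$ is not a derivation.

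\emph{Step 4 (positivity).} For the spectral statement, the matrix (4.1) is block lower triangular. Its eigenvalues are those of $D_\mathfrak{m}$ together with $x_{11}+x_{22}=\mathrm{tr}(D_\mathfrak{m})$ and $2x_{11}+x_{22}$. If $D\in\delta^+(\mathfrak{n})$, then the eigenvalues of $D_\mathfrak{m}$ have positive real parts, and $\mathrm{tr}(D_\mathfrak{m})>0$ is one of the eigenvalues, so it is positive.

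Conversely, suppose the eigenvalues of $D_\mathfrak{m}$ have positive real parts and $\mathrm{tr}(D_\mathfrak{m})>0$. Since $x_{12}=0$, $D_\mathfrak{m}$ is triangular with eigenvalues $x_{11},x_{22}>0$. Then $2x_{11}+x_{22}>0$ as well, so $D\in\delta^+(\mathfrak{n})$.
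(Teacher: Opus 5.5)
Your proposal is correct. It uses the same direct computation as the paper: the Leibniz rule, resp.\ the homomorphism identity, applied to $[e_1,e_2]=e_3$ and $[e_1,e_3]=e_4$. The one real difference is that you also impose $[e_2,e_3]=0$, which the paper's proof never checks in either direction.

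That check is decisive. Since $[De_2,e_3]+[e_2,De_3]=x_{12}e_4$ and $[Ae_2,Ae_3]=y_{12}\det(A_\mathfrak{m})e_4$, every derivation has $x_{12}=0$ and every automorphism has $y_{12}=0$. Conceptually, $\mathrm{span}(e_2,e_3,e_4)$ is the centralizer of $[\mathfrak{n},\mathfrak{n}]$, hence a characteristic ideal, so this could already go into your Step 1. Consequently the converse halves of (1) and (2), as worded, are false:
\begin{itemize}
\item the map $De_1=e_1$, $De_2=e_1+e_2$, $De_3=2e_3$, $De_4=3e_4$ has the form (4.1) and satisfies the spectral hypotheses, yet $[De_2,e_3]+[e_2,De_3]=e_4\neq 0=D[e_2,e_3]$;
\item likewise $Ae_1=e_1$, $Ae_2=e_1+e_2$, $Ae_3=e_3$, $Ae_4=e_4$ has the form (4.2) and is invertible, but is not an automorphism.
\end{itemize}
So your amended statement, with the proviso $x_{12}=0$, resp.\ $y_{12}=0$, is the correct one, and you were right not to try to prove the converse as stated.

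Your use of $x_{12}=0$ in Step 4 is essential, not cosmetic. The paper's converse argument never examines the eigenvalue $2x_{11}+x_{22}$. For a matrix of the form (4.1) with $x_{12}\neq 0$, the hypotheses on $D_\mathfrak{m}$ do not force that eigenvalue to be positive. For instance, $x_{11}=-2$, $x_{12}=3$, $x_{21}=-3$, $x_{22}=3$ gives $D_\mathfrak{m}$ eigenvalues $(1\pm i\sqrt{11})/2$ and trace $1$, but $2x_{11}+x_{22}=-1$.

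The paper's omission also propagates. The second and third matrices in its Proposition for case (B) have $x_{12}=1$ and $x_{12}=-\beta$. The corresponding brackets in types (2) and (3) of the case-(B) theorem therefore violate the Jacobi identity on the triple $(e_5,e_2,e_3)$.
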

\begin{proof}
We denote that
  \[
  D(e_1)=x_{11}e_1+x_{21}e_2+x_{31}e_3+x_{41}e_4,\ D(e_2)=x_{12}e_1+x_{22}e_2+x_{32}e_3+x_{42}e_4,\]
\[
D(e_3)=x_{13}e_1+x_{23}e_2+x_{33}e_3+x_{43}e_4,
  \]
with respect to $D\in \delta^+(\mathfrak{n})$. The following holds from $D\in \delta^+(\mathfrak{n})$.
    \begin{align*}
 D([e_1,e_2])
 &=[D(e_1),e_2]+[e_1,D(e_2)]\\
 &=x_{11}e_3+x_{22}e_3+x_{32}e_4\\
 &=(x_{11}+x_{22})e_3+x_{32}e_4.
 \end{align*}
Transforming both sides, 
\[D(e_3)=\mathrm{tr}(D_\mathfrak{m})e_3+x_{32}e_4.\]
Therefore, $x_{13},x_{23}=0,\ x_{33}=\mathrm{tr}(D_\mathfrak{m})$ and $x_{43}=x_{32}$. Moreover,
  \begin{align*}
 D([e_1,e_3])
 &=[D(e_1),e_3]+[e_1,D(e_3)]\\
 &=x_{11}e_3+x_{23}e_3+x_{33}e_4\\
 &=(2x_{11}+x_{22})e_4. 
 \end{align*}
Therefore, the representation matrix satisfies (4.1).\\
 Conversely, when the representation matrix of $D\in \mathrm{End}(\mathfrak{n})$ has the form of (4.1), $D$ follows 
 \[
  D([e_1,e_2])=\mathrm{tr}(D_\mathfrak{m})e_3+x_{32}e_4=[D(e_1),e_2]+[e_1,D(e_2)],
  \]
 \[
  D([e_1,e_3])=(2x_{11}+x_{22})e_4=[D(e_1),e_3]+[e_1,D(e_3)].
  \]
Therefore, $D\in \delta(\mathfrak{n})$. Furthermore, from the assumptions, since all eigenvalues of $D_\mathfrak{m}$ have positive real parts, $D\in \delta^+(\mathfrak{n})$ follows.
 Next, we denote that
 \[
  A(e_1)=y_{11}e_1+y_{21}e_2+y_{31}e_3+y_{41}e_4\ ,\ A(e_2)=y_{12}e_1+y_{22}e_2+y_{32}e_3+y_{42}e_4,
  \]
\[
  A(e_3)=y_{13}e_1+y_{23}e_2+y_{33}e_3+y_{43}e_4,
  \]
with respect to $A\in \mathrm{Aut}(\mathfrak{n})$. The following holds from $A\in \mathrm{Aut}(\mathfrak{n})$.
  \begin{align*}
 A([e_1,e_2])
 &=[A(e_1),A(e_2)]\\
 &=[y_{11}e_1+y_{21}e_2+y_{31}e_3+y_{41}e_4,y_{12}e_1+y_{22}e_2+y_{32}e_3+y_{42}e_4]\\
 &=[y_{11}e_1,y_{22}e_2+y_{32}e_3]+[y_{21}e_2+y_{31}e_3,y_{12}e_1]\\
 &=y_{11}y_{22}e_3-y_{12}y_{21}e_3+y_{11}y_{32}e_4-y_{12}y_{31}e_4\\
 &=\det(A_\mathfrak{m})e_3+(y_{11}y_{32}-y_{12}y_{31})e_4.
 \end{align*}
Transforming both sides, 
\[A(e_3)=\det(A_\mathfrak{m})e_3+(y_{11}y_{32}-y_{12}y_{31})e_4.\]
Therefore, $y_{13},\ y_{23}=0,\ y_{33}=\det(A_\mathfrak{m}),\ x_{43}=y_{11}y_{32}-y_{12}y_{31}$. Moreover,
  \begin{align*}
 A([e_1,e_3])
 &=[A(e_1),A(e_3)]\\
 &=[y_{11}e_1+y_{21}e_2+y_{31}e_3+y_{41}e_4,y_{13}e_1+y_{23}e_2+y_{33}e_3+y_{43}e_4]\\
 &=[y_{11}e_1,y_{23}e_2+y_{33}e_3]+[y_{21}e_2+y_{31}e_3,y_{13}e_1]\\
 &=y_{11}y_{23}e_3-y_{21}y_{13}e_3+y_{11}y_{33}e_4-y_{13}y_{31}e_4\\
 &=y_{11}\det(A_\mathfrak{m})e_4.
 \end{align*}
Transforming both sides, 
\[
A(e_4)=y_{11}\det(A_\mathfrak{m})e_4.
\] 
Therefore, the representation matrix satisfies (4.2).
 Conversely, when the representation matrix of  $A\in \mathrm{GL}(\mathfrak{n})$ has the form of (4.2), the following holds. 
  \[
  A([e_1,e_2])=\det(A_\mathfrak{m})e_3+(y_{11}y_{32}-y_{12}y_{31})e_4=[A(e_1),A(e_2)],
  \]
\[
 A([e_1,e_3])=y_{11}\det(A_\mathfrak{m})e_4=[A(e_1),A(e_3)].
 \]
Furthermore, from the assumptions, since $\det(A)\neq 0$, $A\in \mathrm{Aut}(\mathfrak{n})$ follows.
\end{proof}
 Furthermore, the following theorem holds.
  \begin{lem}\label{lem:b1}
For any $D\in \delta^+(\mathfrak{n})$, there exists unique $\Tilde{D}\in \delta^+(\mathfrak{n})$, that is, conjugate to $D$ by $\mathrm{Aut}(\mathfrak{n})$ such that the representation matrix of $\Tilde{D}$ concerning with $e_1,e_2,e_3,e_4$ takes the following form.
\begin{align}
  \Tilde{D}=
  \left(
  \begin{array}{cccc}
     x_{11} &x_{12} & 0 &0\\
     x_{21} &x_{22} & 0 &0\\
     0 &0 &x_{11}+x_{22} &0\\
     y_1 &y_2 &0 &2x_{11}+x_{22}
  \end{array}
 \right).\tag{4.1}
 \end{align}
  Here,\ 
  \begin{align*}
 y_1&=x_{22}(x_{31}x_{32}+x_{11}x_{41})-x_{21}(x_{22}x_{32}+x_{11}x_{42}),\\
 y_2&=x_{11}(x_{32}x_{32}+x_{11}x_{42})-x_{12}(x_{31}x_{32}+x_{11}x_{41}).
 \end{align*}
  \end{lem}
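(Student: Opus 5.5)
The plan is to mirror the proof of Lemma \ref{lem:n3-1}. Given $D\in\delta^+(\mathfrak{n})$ in the form (4.1) of the preceding lemma, I will look for $A\in\mathrm{Aut}(\mathfrak{n})$ of the form (4.2) and set $\Tilde{D}:=ADA^{-1}$. Three facts about $\Tilde{D}$ come for free:
\begin{enumerate}
\item[(i)] $\Tilde{D}$ is again a derivation, since conjugating a derivation by an automorphism gives a derivation.
\item[(ii)] $\Tilde{D}$ has the same eigenvalues as $D$, so $\Tilde{D}\in\delta^+(\mathfrak{n})$.
\item[(iii)] By the preceding lemma, $\Tilde{D}$ automatically has the shape (4.1). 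Its $(4,3)$ entry equals its $(3,2)$ entry, and its diagonal entries are determined by $\Tilde{D}_\mathfrak{m}=A_\mathfrak{m}D_\mathfrak{m}A_\mathfrak{m}^{-1}$.
\end{enumerate}
So the only real task is to choose $A$ so that the $(3,1)$ and $(3,2)$ entries of $\Tilde{D}$ vanish. The vanishing of the $(4,3)$ entry then follows for free, and the bottom-left entries $(y_1,y_2)$ are whatever the computation produces.

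To keep $\Tilde{D}_\mathfrak{m}=D_\mathfrak{m}$, which is what the target form (with the same $x_{ij}$ in the upper block) requires, I take $A_\mathfrak{m}=I$. Then (4.2) forces $y_{33}=y_{44}=1$ and $y_{43}=y_{32}$, leaving $y_{31},y_{32},y_{41},y_{42}$ free. Writing $A=I+N$ with $N$ strictly lower triangular, I compute the third row of $AD=\Tilde{D}A$. Comparing the first two entries gives the linear condition
\[
(x_{31},x_{32})+(y_{31},y_{32})\bigl(D_\mathfrak{m}-\mathrm{tr}(D_\mathfrak{m})I\bigr)=(0,0).
\]
The eigenvalues of $D_\mathfrak{m}-\mathrm{tr}(D_\mathfrak{m})I$ are $-\lambda_2$ and $-\lambda_1$, where $\lambda_1,\lambda_2$ are the eigenvalues of $D_\mathfrak{m}$. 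These have nonzero (negative) real parts, so the matrix is invertible and $(y_{31},y_{32})$ is uniquely determined.

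Next I compare the fourth row of $AD=\Tilde{D}A$ to read off $y_1,y_2$ as explicit expressions in the $x_{ij}$; the choice of $y_{41},y_{42}$ (simplest is $0$) only shifts these. Alternatively, one can follow Lemma \ref{lem:n3-1} literally and take $A_\mathfrak{m}=D_\mathfrak{m}$, with third row $(x_{11}x_{31}+x_{21}x_{32},\ x_{22}x_{32}+x_{12}x_{31})$. Since $A_\mathfrak{m}=D_\mathfrak{m}$ commutes with $D_\mathfrak{m}$, this also preserves $D_\mathfrak{m}$, and it appears to be the normalization behind the stated $y_1,y_2$; I would verify that choice entrywise by checking $AD=\Tilde{D}A$ directly. (The term $x_{32}x_{32}$ in $y_2$ looks like it should probably read $x_{31}x_{32}$ or $x_{22}x_{32}$; the entrywise check will settle this.)

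The main obstacle is the uniqueness claim and the bookkeeping in the fourth row. Once $A$ is pinned down by the normalization, $\Tilde{D}$ is determined, and uniqueness should be read in that sense. I expect the fourth-row computation to be the step where errors creep in, so I would carry it out carefully through the identity $AD=\Tilde{D}A$ rather than by inverting $A$.
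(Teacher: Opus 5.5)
Your reasoning is sound, and the fallback you mention ($A_\mathfrak{m}=D_\mathfrak{m}$ with third row $(x_{31},x_{32})D_\mathfrak{m}$) is exactly the paper's proof. The paper completes $A$ with fourth row $(0,0,x_{32}\Delta,x_{11}\Delta)$, where $\Delta:=\det D_\mathfrak{m}$; given the first three rows, (4.2) forces this row up to its first two entries. Your main route ($A_\mathfrak{m}=I$) is a genuinely different normalization, but it does not produce the $y_1,y_2$ of the statement. With $y_{41}=y_{42}=0$, the fourth row of $AD=\Tilde{D}A$ gives bottom-left entries $(x_{41}+y_{32}x_{31},\,x_{42}+y_{32}x_{32})$. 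For instance, with $D_\mathfrak{m}=\mathrm{diag}(2,1)$, $x_{32}=1$ and $x_{31}=x_{41}=x_{42}=0$, you get $(0,\tfrac12)$, whereas the statement prescribes $(0,2)$. So for the lemma as written you must use the paper's $A$. Its check is short. Row 3 of $AD=\Tilde{D}A$ reads $(x_{31},x_{32})\bigl(D_\mathfrak{m}^2-\mathrm{tr}(D_\mathfrak{m})D_\mathfrak{m}+\Delta I\bigr)=0$, which is Cayley--Hamilton. Row 4 gives $(y_1,y_2)D_\mathfrak{m}=\Delta\,(x_{31}x_{32}+x_{11}x_{41},\ x_{32}^2+x_{11}x_{42})$, hence $(y_1,y_2)=(x_{31}x_{32}+x_{11}x_{41},\ x_{32}^2+x_{11}x_{42})\,\mathrm{adj}(D_\mathfrak{m})$, with $\mathrm{adj}(D_\mathfrak{m})=\mathrm{tr}(D_\mathfrak{m})I-D_\mathfrak{m}$. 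This confirms $y_2$ as printed. The misprint is in $y_1$, whose factor $x_{22}x_{32}$ should be $x_{32}x_{32}$, so your suspicion points at the wrong formula.

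What each route buys. Your $A$ is unipotent, so it is automatically invertible and you only use $\Delta\neq0$. Moreover, $(y_{41},y_{42})$ shifts the bottom-left entries by $(y_{41},y_{42})\bigl(D_\mathfrak{m}-(2x_{11}+x_{22})I\bigr)$, and this matrix is invertible, so your route can make $y_1=y_2=0$ in one step, absorbing Lemma \ref{lem:b4.5}. The paper's $A$ has polynomial entries, which is what yields the explicit formulas. However, $\det A=x_{11}\Delta^3$, and the paper never checks $x_{11}\neq0$. This holds only because every derivation of this $\mathfrak{n}$ has $x_{12}=0$: indeed $0=D[e_2,e_3]=[De_2,e_3]+[e_2,De_3]=x_{12}e_4$. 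This relation is missing from the printed (4.1), and likewise every automorphism has $y_{12}=0$. Consequently $D_\mathfrak{m}$ is lower triangular, $x_{11},x_{22}>0$ are eigenvalues, and $D_\mathfrak{m}-(2x_{11}+x_{22})I$ has eigenvalues $-(x_{11}+x_{22})$ and $-2x_{11}$. Keep this in mind when you check entrywise. If you feed in the printed (4.1) with $x_{12}\neq0$, the $(4,3)$ entry of your $ADA^{-1}$ comes out as $-x_{12}(x_{11}x_{31}+x_{21}x_{32})/\Delta\neq0$; your point (iii) holds precisely because genuine derivations have $x_{12}=0$. Finally, neither argument establishes more than the tautological uniqueness. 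Nothing stronger is true, since Lemma \ref{lem:b4.5} conjugates further within the same shape.
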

  \begin{proof}
We take an element of $\mathrm{Aut}(\mathfrak{n})$ whose representation matrix concerning with $e_1,e_2,e_3,e_4$ is
  \[A=
  \left(
  \begin{array}{cccc}
     x_{11} &x_{12} & 0 &0\\
     x_{21} &x_{22} & 0 &0\\
     x_{11}x_{31}+x_{21}x_{32} &x_{22}x_{32}+x_{12}x_{31} &\det\Tilde{X}&0\\
     0 &0 &x_{32}\det\Tilde{X} &x_{11}\det\Tilde{X}
  \end{array}
 \right),
  \]
where
\[
   \Tilde{X}=
  \left(
  \begin{array}{cc}
     x_{11} &x_{12} \\
     x_{21} &x_{22} 
  \end{array}
 \right).
  \]
Since it becomes $AD=\Tilde{D}A$, $D$ and $\Tilde{D}$ are conjugate in $\mathrm{Aut}(\mathfrak{n})$. 
  \end{proof}
From Lemma \ref{lem:b1}, we may assume that the representation matrix of any $D\in \delta^+(\mathfrak{n})$ is expressed in the form of (4.1). Under the assumption, the following holds.
  \begin{lem}\label{lem:b4.5}
For any $D\in \delta^+(\mathfrak{n})$, there exists unique $\Tilde{D}\in \delta^+(\mathfrak{n})$, that is, conjugate to $D$ by $\mathrm{Aut}(\mathfrak{n})$ such that the representation matrix of $\Tilde{D}$ concerning with $e_1,e_2,e_3,e_4$ takes the following form.
 \[\Tilde{D}=
  \left(
  \begin{array}{cccc}
     x_{11} &x_{12} & 0 &0\\
     x_{21} &x_{22} & 0 &0\\
     0 &0 &x_{11}+x_{22} &0\\
     0 &0 &0 &2x_{11}+x_{22}
  \end{array}
 \right).
  \]
\end{lem}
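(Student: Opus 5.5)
The plan is to start from the normal form of Lemma \ref{lem:b1} and remove the two remaining entries $y_1,y_2$ in the bottom row by one further conjugation. The conjugating automorphism will be a unipotent element of $\mathrm{Aut}(\mathfrak{n})$ that only modifies $e_1,e_2$ in the $e_4$-direction. Concretely, take
\[
A=\left(
\begin{array}{cccc}
1&0&0&0\\
0&1&0&0\\
0&0&1&0\\
a&b&0&1
\end{array}
\right),
\]
i.e. $y_{11}=y_{22}=1$, $y_{12}=y_{21}=y_{31}=y_{32}=0$, $y_{41}=a$, $y_{42}=b$. By the form (4.2) this is an automorphism: $y_{33}=1$, $y_{43}=0$, $y_{44}=1$. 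This is also clear directly, since $e_4$ is central.

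The computation is short. Left multiplication by $A$ adds $a\cdot(\text{row }1)+b\cdot(\text{row }2)$ to the fourth row. Right multiplication of a matrix whose fourth column is $(0,0,0,\lambda)^t$ by $A$ leaves rows $1$--$3$ unchanged. Hence, with $\lambda:=2x_{11}+x_{22}$ and $X=D_\mathfrak{m}$, the equation $AD=\Tilde{D}A$ reduces to the single row equation
\[
(a,b)\,(\lambda I_2-X)=(y_1,y_2).
\]
Thus the lemma follows once $\lambda I_2-X$ is invertible, i.e. once $\lambda$ is not an eigenvalue of $D_\mathfrak{m}$.

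The main obstacle is this non-resonance condition. With $D_\mathfrak{m}$ a general $2\times 2$ block it could fail: one can choose $x_{11}<0$ and entries with $x_{12}x_{21}<0$ so that $2x_{11}+x_{22}$ is an eigenvalue. To get around this, I would use the derivation identity on the pair $(e_2,e_3)$, which the computation of (4.1) did not exploit. Since $[e_2,e_3]=0$ and $D(e_3)=(x_{11}+x_{22})e_3+x_{32}e_4$, we get
\[
0=D([e_2,e_3])=[D(e_2),e_3]+[e_2,D(e_3)]=x_{12}e_4,
\]
so $x_{12}=0$. Then $D_\mathfrak{m}$ is lower triangular with eigenvalues $x_{11},x_{22}$, and both are positive because $D\in\delta^+(\mathfrak{n})$. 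Therefore
\[
\lambda-x_{11}=x_{11}+x_{22}>0,\qquad \lambda-x_{22}=2x_{11}>0,
\]
so $\det(\lambda I_2-X)=(\lambda-x_{11})(\lambda-x_{22})\neq0$, and $(a,b)$ exists and is unique.

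Finally, $\Tilde{D}$ is conjugate to $D$, so it is again in $\delta^+(\mathfrak{n})$. Composing this $A$ with the automorphism of Lemma \ref{lem:b1} gives the required conjugation. Uniqueness within this normal form follows from the uniqueness of $(a,b)$: the unipotent conjugations of this shape act on the bottom row $(y_1,y_2)$ by the invertible affine map above, so exactly one of them brings it to $(0,0)$.
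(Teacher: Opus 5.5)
Your proof is correct, and it uses the same construction as the paper: a single conjugation by an automorphism that changes only the last row. This reduces the lemma to the system $(a,b)(\lambda I_2-\Tilde X)=(y_1,y_2)$, whose determinant is $\det(\lambda I_2-\Tilde X)=x_{11}(2x_{11}+x_{22})+\det\Tilde X$. That is exactly the quantity the paper first proves to be nonzero. The paper's conjugator differs from your unipotent one by the factor $\mathrm{diag}(\Tilde X,\det\Tilde X,x_{11}\det\Tilde X)$, which commutes with $\Tilde D$, so your simpler choice loses nothing.

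Where you genuinely differ is in justifying non-resonance, and here your version is the sound one. The paper argues inside the general derivation form (4.1), where $x_{12}$ is left free, using an explicit eigenvalue formula. That formula's discriminant has the wrong sign: under the hypothesis $\det\Tilde X=-x_{11}(2x_{11}+x_{22})$ it should read $(x_{11}+x_{22})^2+4x_{11}(2x_{11}+x_{22})$. With the correct sign, positivity of the eigenvalues only gives $x_{11}<0$, which is no contradiction while $x_{12}\neq0$ is allowed. Your example shows resonance really occurs there: $x_{11}=-1$, $x_{22}=4$, $x_{12}x_{21}=-6$ gives eigenvalues $1,2$ for $D_\mathfrak{m}$ and $2x_{11}+x_{22}=2$. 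Your observation that $D([e_2,e_3])=0$ forces $x_{12}=0$ closes exactly this hole. It also shows that the paper's form (4.1) for derivations, and likewise (4.2) for automorphisms, is too large. With $x_{12}=0$, $x_{11}$ is an eigenvalue of $D_\mathfrak{m}$ and hence positive, so non-resonance is immediate.

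A minor point: your last paragraph proves uniqueness of the conjugator $(a,b)$ within the unipotent family, not uniqueness of $\Tilde D$. The latter is immediate anyway, since the displayed matrix is determined by $D_\mathfrak{m}$, and the paper does not argue it either.
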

\begin{proof}
First, for any $D\in \delta^+(\mathfrak{n})$, we show that $x_{11}(2x_{11}+x_{22})+\det\Tilde{X}\neq0$, where
\[
   \Tilde{X}=
  \left(
  \begin{array}{cc}
     x_{11} &x_{12} \\
     x_{21} &x_{22} 
  \end{array}
 \right).\]
Assume that $x_{11}(2x_{11}+x_{22})+\det\Tilde{X}=0$. Then, eigenvalues $\lambda_1,\ \lambda_2$ of $\Tilde{X}$ are expressed as follows.
\begin{align*}
\lambda_1&=\frac{(x_{11}+x_{22})+\sqrt{(x_{11}+x_{22})^2-4x_{11}(2x_{11}+x_{22})}}{2},\\
\lambda_2&=\frac{(x_{11}+x_{22})-\sqrt{(x_{11}+x_{22})^2-4x_{11}(2x_{11}+x_{22})}}{2}.
\end{align*}
From $D\in \delta^+(\mathfrak{n})$, we have $x_{11}+x_{22}>0$. Since $\lambda_2$ has a positive real part, the following holds.
\[
x_{11}(2x_{11}+x_{22})>0.
\]
On the other hand, since $D\in \delta^+(\mathfrak{n})$, we have $\det\Tilde{X}>0$, and from the assumption,
\[
x_{11}(2x_{11}+x_{22})=-\det\Tilde{X},
\]
which implies
\[
x_{11}(2x_{11}+x_{22})<0,
\]
so this is a contradiction.
Thus, for any $D\in \delta^+(\mathfrak{n})$, $x_{11}(2x_{11}+x_{22})+\det\Tilde{X}\neq0$.
Second, we take an element of $\mathrm{Aut}(\mathfrak{n})$ whose representation matrix concerning with $e_1,e_2,e_3,e_4$ is
  \[A=
  \left(
  \begin{array}{cccc}
     x_{11} &x_{12} & 0 &0\\
     x_{21} &x_{22} & 0 &0\\
     0 &0 &\det\Tilde{X}&0\\
     a &b &x_{32}\det\Tilde{X} &x_{11}\det\Tilde{X}
  \end{array}
 \right).
  \]
Here, $a,b\in \mathbb{R}$ are
  \begin{align*}
a&=\frac{(2y_1x_{11}+y_2x_{21})(2x_{11}+x_{22})}{x_{11}(2x_{11}+x_{22})+\det{\Tilde{X}}},\\
b&=\frac{(y_1x_{12}+y_2x_{11}+y_2x_{22})(2x_{11}+x_{22})}{x_{11}(2x_{11}+x_{22})+\det{\Tilde{X}}}.
\end{align*}
Since it becomes $AD=\Tilde{D}A$, $D$ and $\Tilde{D}$ are conjugate in $\mathrm{Aut}(\mathfrak{n})$. 
\end{proof}
From the above discussion, Case of (B) is reduced to the case of four-dimensional SNC-algebras with non-abelian derived algebra by Lemma \ref{lem:b4.5}. Hence, the following holds by Lemma \ref{lem:3b} and Proposition \ref{prop:3b}.
\begin{prop}
The representation matrix of $D\in\delta^+(\mathfrak{n})$ concerning with $e_1,e_2,e_3,e_4$ is conjugate to one of the following.
\[
  \left(
  \begin{array}{cccc}
     1 &0 & 0 &0\\
     0 &x & 0 &0\\
     0 &0 &1+x &0\\
     0 &0 &0 &2+x
  \end{array}
 \right),
 \left(
  \begin{array}{cccc}
     1 &1 & 0 &0\\
     0 &1 & 0 &0\\
     0 &0 &2 &0\\
     0 &0 &0 &3
  \end{array}
 \right),
 \left(
  \begin{array}{cccc}
     1 & -\beta & 0 & 0\\
     \beta &1 & 0 & 0\\
     0 &0 &2 &0\\
     0 &0 &0 &3
  \end{array}
 \right),
\]
where, $0<x,\beta$.\ 
\end{prop}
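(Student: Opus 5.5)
The plan is to follow the same route as in the four-dimensional non-abelian case. By Lemma \ref{lem:b4.5}, every $D\in\delta^+(\mathfrak{n})$ is conjugate under $\mathrm{Aut}(\mathfrak{n})$ to a block-diagonal derivation
\[
\Tilde{D}=\mathrm{diag}\bigl(D_\mathfrak{m},\ \mathrm{tr}(D_\mathfrak{m}),\ 2x_{11}+x_{22}\bigr),
\]
which is completely determined by the $2\times 2$ block $D_\mathfrak{m}$. So the first step is to pass to the subset $\Tilde{\delta}^+(\mathfrak{n})$ of such block-diagonal derivations. Exactly as for $\delta^+(\mathfrak{n})$ in dimension four, one then shows that
\[
\Tilde{\delta}^+(\mathfrak{n})/\stackrel{O}{\sim}\ =\ \delta^+(\mathfrak{n})/\stackrel{O}{\sim}.
\]

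The second step is an analogue of Lemma \ref{lem:3b}: describe O-equivalence on $\Tilde{\delta}^+(\mathfrak{n})$ in terms of $D_\mathfrak{m}$ only. Take $D_1\stackrel{O}{\sim}D_2$ in $\Tilde{\delta}^+(\mathfrak{n})$, so $D_1=g^{-1}\mathrm{ad}(X)g+\lambda g^{-1}D_2g$. I would apply $\pi$ and use two facts. First, $\mathrm{ad}(X)$ maps $\mathfrak{n}$ into $[\mathfrak{n},\mathfrak{n}]=\mathrm{span}(e_3,e_4)$, so it is killed by $\pi$. Second, $g$ preserves $\mathrm{span}(e_3,e_4)$ by (4.2). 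Together these give
\[
(D_1)_\mathfrak{m}=\lambda\, g_\mathfrak{m}^{-1}(D_2)_\mathfrak{m}\, g_\mathfrak{m}.
\]
Conversely, given such $g_\mathfrak{m}$ and $\lambda$, I would extend $g_\mathfrak{m}$ to an automorphism by (4.2) and then invoke Lemma \ref{lem:b4.5} to remove the lower-left entries this creates. The outcome is that classifying the SNC-algebras amounts to classifying $D_\mathfrak{m}$ up to scaling by $\lambda\neq0$ and conjugation by the admissible $g_\mathfrak{m}$. The sign of $\lambda$ is fixed by the positivity condition $\mathrm{tr}(D_\mathfrak{m})>0$.

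The third step is the normal-form computation: run through the real Jordan types of $D_\mathfrak{m}$ (distinct real, repeated non-semisimple, complex pair) and use $\lambda$ to normalize one entry to $1$. Unlike the three-dimensional Heisenberg case, the diagonal entries now play asymmetric roles: the weight on $e_4$ is $2x_{11}+x_{22}$, not a symmetric function of the eigenvalues. So I would normalize $x_{11}=1$ rather than ordering the eigenvalues. This is why the diagonal family carries an unrestricted parameter $x>0$ instead of $0<x<\tfrac12$. Finally, the three displayed matrices are obtained by filling in the $e_3,e_4$ entries $\mathrm{tr}(D_\mathfrak{m})$ and $2x_{11}+x_{22}$.

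The main obstacle I expect is pinning down exactly which $g_\mathfrak{m}$ are admissible, and likewise which $D_\mathfrak{m}$ actually occur. The relation $[e_2,e_3]=0$ must be preserved. Checking it on $[Ae_2,Ae_3]$ forces $y_{12}\det(A_\mathfrak{m})=0$, i.e. $y_{12}=0$. The analogous check on derivations, $D([e_2,e_3])=[De_2,e_3]+[e_2,De_3]$, gives $x_{12}(x_{11}+x_{22})=0$, so $x_{12}=0$ whenever $\mathrm{tr}(D_\mathfrak{m})>0$. Neither constraint is recorded in (4.1) or (4.2). Consequently $D_\mathfrak{m}$ would be lower triangular with real eigenvalues $x_{11},x_{22}$. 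That makes the complex case with parameter $\beta$, and the $(1,2)$ entry $1$ in the second matrix as written, incompatible with the derivation condition. So I would first recheck (4.1) and (4.2) against the full bracket table. If these constraints hold, the classification must be redone with $g_\mathfrak{m}$ lower triangular. The resulting normal forms would then be $\mathrm{diag}(1,x)$ and a lower-triangular Jordan block with $(2,1)$ entry $1$, which corresponds to the $x=1$ case. I would settle this verification before writing out the remaining cases.
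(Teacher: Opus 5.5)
Your suspicion is correct, and you can state it as a conclusion rather than a caveat: the constraints do hold. Read the way the paper uses it, the statement is a list of O-equivalence classes in $\delta^+(\mathfrak{n})$ that produces the SNC-algebras of the next theorem. In that reading it is false, so no route can prove it.

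The conceptual reason is that $\mathfrak{c}=\mathrm{span}(e_2,e_3,e_4)$ is the centralizer of $[\mathfrak{n},\mathfrak{n}]=\mathrm{span}(e_3,e_4)$. It is therefore a characteristic ideal, preserved by every derivation and automorphism, which is exactly $x_{12}=0$ and $y_{12}=0$. Directly: $De_3=(x_{11}+x_{22})e_3+x_{32}e_4$ and $e_2$ commutes with $e_3,e_4$, so $D[e_2,e_3]=[De_2,e_3]+[e_2,De_3]$ reads $0=x_{12}e_4$. Hence $x_{12}=0$ for every derivation, with no positivity needed; the factor $x_{11}+x_{22}$ in your version is a harmless slip.

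For the second listed matrix one gets $[e_1+e_2,e_3]+[e_2,2e_3]=e_4\neq 0$, and for the third $-\beta e_4$. Neither is a derivation, and items (2), (3) of the following theorem violate the Jacobi identity on $e_5,e_2,e_3$.

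Conversely, let $D_J$ be given by $e_1\mapsto e_1+e_2$, $e_2\mapsto e_2$, $e_3\mapsto 2e_3$, $e_4\mapsto 3e_4$. Then $D_J\in\delta^+(\mathfrak{n})$, but it is O-equivalent to none of the listed matrices. O-equivalence changes the induced map on $\mathfrak{n}/[\mathfrak{n},\mathfrak{n}]$ only by a nonzero scalar and a conjugation, and for $D_J$ that map is not semisimple. $D_J$ is similar in $\mathrm{GL}(4,\mathbb{R})$ to the second matrix via the swap $e_1\leftrightarrow e_2$, but the swap is not an automorphism; this is presumably how the error arose.

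The paper's proof is your Steps 1--2 followed by an appeal to Lemma~\ref{lem:3b} and Proposition~\ref{prop:3b}. It classifies $D_\mathfrak{m}$ as an arbitrary element of $\mathfrak{gl}(\mathfrak{m})$, up to scaling and conjugation by arbitrary $g_\mathfrak{m}\in\mathrm{GL}(\mathfrak{m})$, exactly as for the Heisenberg algebra. That is where it fails: it rests on (4.1)--(4.2), whose proof checks only $[e_1,e_2]$ and $[e_1,e_3]$ and never imposes $[e_2,e_3]=0$. Even on its own terms the appeal is inconsistent. The $(4,4)$ entry $y_{11}\det A_\mathfrak{m}$ in (4.2) already excludes $g_\mathfrak{m}$ with $y_{11}=0$, which is the same asymmetry you noticed.

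With $x_{12}=y_{12}=0$ imposed, your plan proves the corrected statement:
\begin{itemize}
\item Step 1 survives. The conclusion of Lemma~\ref{lem:b4.5} remains true for genuine derivations, since the entries it removes have weights $x_{22}$, $x_{11}+x_{22}$, $x_{11}$, $2x_{11}>0$ relative to the diagonal part.
\item In Step 2, extend a lower-triangular $g_\mathfrak{m}$ to the block-diagonal automorphism $g_\mathfrak{m}\oplus\det g_\mathfrak{m}\oplus y_{11}\det g_\mathfrak{m}$. This creates no lower-left entries at all.
\item Step 3 classifies lower-triangular $D_\mathfrak{m}$ with positive diagonal entries, $x_{11}$ on $\mathfrak{n}/\mathfrak{c}$ and $x_{22}$ on $\mathfrak{c}/[\mathfrak{n},\mathfrak{n}]$, up to positive scaling and lower-triangular conjugation. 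The ratio $x_{22}/x_{11}$ is an invariant.
\end{itemize}
The normal forms are $\mathrm{diag}(1,x,1+x,2+x)$ with $x>0$, pairwise inequivalent, together with the single non-semisimple form $D_J$. The complex family does not occur.
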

\begin{thm}
Let $\mathfrak{g}$ be an SNC-algebra of dimension five with non-abelian derived algebra $\mathfrak{n}=[\mathfrak{g},\mathfrak{g}]$, which has an orthonormal basis $e_1,e_2,e_3,e_4$ with the relations
\[
[e_1,e_2]=e_3,\ [e_1,e_3]=e_4,\ \text{others are zero.}
\]
Then there exists $e_5\in \mathfrak{g}\backslash\mathfrak{n}$ whose bracket operations are given by one of the following.\ 
\begin{enumerate}
\item[(1)]
$ [e_5,e_1]=e_1,\ [e_5,e_2] =xe_2,\ [e_5,e_3] =(1+x)e_3,\ [e_5,e_4]=(2+x)e_4,\ 0<x.$
\item[(2)]
$ [e_5,e_1]=e_1,\ [e_5,e_2] =e_1+e_2,\ [e_5,e_3] =2e_3,\ [e_5,e_4]=3e_4.$
\item[(3)]
 $ [e_5,e_1]=e_1+\beta e_2,\ [e_5,e_2] =-\beta e_1+e_2,\ [e_5,e_3] =2e_3,\ [e_5,e_4]=3e_4,0<\beta.$
\end{enumerate}
Any two Lie algebras in different types or with different parameters are not isomorphic to each other.
\end{thm}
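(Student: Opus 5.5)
The plan is to follow the four-dimensional non-abelian case, as the discussion before the proposition suggests. By Theorem \ref{thm:douti}, $\mathfrak{g}\cong\mathfrak{n}(D)$ for some $D\in\delta^+(\mathfrak{n})$, where $\mathfrak{n}$ has the relations $[e_1,e_2]=e_3$, $[e_1,e_3]=e_4$. Lemma \ref{lem:b1} and Lemma \ref{lem:b4.5} show that, up to conjugation by $\mathrm{Aut}(\mathfrak{n})$, $D$ is block diagonal,
\[
D=\mathrm{diag}\bigl(D_\mathfrak{m},\ \mathrm{tr}(D_\mathfrak{m}),\ 2x_{11}+x_{22}\bigr).
\]
Conjugation by an automorphism is a special case of O-equivalence, so such a reduction does not change $\mathfrak{n}(D)$ up to isomorphism. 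Hence it suffices to classify these block-diagonal derivations up to O-equivalence.

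The first step is to prove an analogue of Lemma \ref{lem:3b}. Two reduced derivations $D_1,D_2$ should give isomorphic algebras exactly when $(D_1)_\mathfrak{m}=\lambda g_\mathfrak{m}^{-1}(D_2)_\mathfrak{m}g_\mathfrak{m}$, where $g_\mathfrak{m}=\pi\circ g|_\mathfrak{m}$ for some $g\in\mathrm{Aut}(\mathfrak{n})$. The argument projects the relation $D_1=g^{-1}\mathrm{ad}(X)g+\lambda g^{-1}D_2g$ onto $\mathfrak{m}$ via $\pi$. Since $\mathrm{ad}(X)$ maps $\mathfrak{n}$ into $[\mathfrak{n},\mathfrak{n}]=\mathrm{span}(e_3,e_4)$, it vanishes after projection. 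Here $\pi$ intertwines $g$ with $g_\mathfrak{m}$ and $D$ with $D_\mathfrak{m}$, as in the commutative diagrams for the three-dimensional case. For the converse, I would lift $g_\mathfrak{m}$ to an automorphism using (4.2). The lifted relation then differs from the required one only in the strictly lower, nilpotent part, and the uniqueness in Lemma \ref{lem:b4.5} kills that discrepancy.

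Next I would pin down which $g_\mathfrak{m}$ actually occur. I expect this to be the main obstacle. The form (4.2) was derived only from the brackets $[e_1,e_2]$ and $[e_1,e_3]$, but an automorphism must also respect $[e_2,e_3]=0$. Since $A(e_3)$ has $e_3$-coefficient $\det(A_\mathfrak{m})\neq0$, the bracket $[A(e_2),A(e_3)]$ has $e_4$-coefficient $y_{12}\det(A_\mathfrak{m})$, so $y_{12}=0$. Similarly, $D([e_2,e_3])=0$ gives $x_{12}=0$ for every derivation. This is consistent with $\mathrm{span}(e_2,e_3,e_4)$ being a characteristic ideal of $\mathfrak{n}$. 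I would therefore first recheck (4.1) and (4.2) against $[e_2,e_3]=0$. If $x_{12}=0$ is forced, $D_\mathfrak{m}$ is lower triangular with real positive eigenvalues $x_{11},x_{22}$, and $g_\mathfrak{m}$ ranges over invertible lower-triangular matrices. The classification then consists of two steps.
\begin{enumerate}
\item[(a)] Scale by $\lambda=1/x_{11}$ so that the eigenvalue on $e_1$ equals $1$. This is possible because $e_1$ is singled out as the only direction outside the characteristic ideal.
\item[(b)] Normalize the off-diagonal entry with a lower-triangular $g_\mathfrak{m}$. If $x\neq1$ it can be removed, giving type (1). If $x=1$ and the entry is nonzero, it can be scaled to $1$, giving the Jordan type (2).
\end{enumerate}
The rotation-type case (3) then needs separate scrutiny: either I find an error in this constraint, or the proof of the statement as written has to address it.

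Finally, I would prove non-isomorphism between types and between parameter values. The invariant is the O-equivalence class of $D_\mathfrak{m}$ modulo positive scaling and conjugation by the admissible $g_\mathfrak{m}$. Because $e_1$ is distinguished, the ratio of the eigenvalue on $\mathfrak{n}/\mathrm{span}(e_2,e_3,e_4)$ to the eigenvalue on $\mathrm{span}(e_2,e_3,e_4)/\mathrm{span}(e_3,e_4)$ is an isomorphism invariant, and it separates the values of $x$ in type (1). Diagonalizability of $D_\mathfrak{m}$ distinguishes type (1) with $x=1$ from type (2). For type (3), if it survives, the non-real eigenvalues of $D_\mathfrak{m}$ separate it from the other types, and the ratio of imaginary to real part determines $\beta$.
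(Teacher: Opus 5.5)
You are right about the constraint, and it settles the matter rather than leaving something for ``scrutiny''. Because $[e_2,e_3]=0$, every derivation satisfies $0=[De_2,e_3]+[e_2,De_3]=x_{12}e_4$. Likewise every automorphism satisfies $0=[Ae_2,Ae_3]=y_{12}\det(A_\mathfrak{m})e_4$. Hence $x_{12}=y_{12}=0$.

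The paper derives (4.1) and (4.2) from $[e_1,e_2]$ and $[e_1,e_3]$ only. It then concludes by Lemma~\ref{lem:3b} and Proposition~\ref{prop:3b}, i.e.\ by the Heisenberg case, where $D_\mathfrak{m}$ is arbitrary and $g_\mathfrak{m}$ runs over all of $\mathrm{GL}(\mathfrak{m})$. That transfer is invalid here, and it is the source of types (2) and (3). Indeed type (3) is not a Lie algebra: $[e_5,[e_2,e_3]]=0$, while $[[e_5,e_2],e_3]+[e_2,[e_5,e_3]]=-\beta e_4$.

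The gap in your own proposal is that type (2) fails the same test. With $[e_5,e_2]=e_1+e_2$ you get $[e_1+e_2,e_3]+2[e_2,e_3]=e_4\neq0$. So your step (b) does not yield ``the Jordan type (2)''. Once $x_{12}=0$, the off-diagonal entry is $x_{21}$, and the normal form is
\[
[e_5,e_1]=e_1+e_2,\quad [e_5,e_2]=e_2,\quad [e_5,e_3]=2e_3,\quad [e_5,e_4]=3e_4.
\]
This is a genuine SNC-algebra that is missing from the list. It is not isomorphic to type (1) with $x=1$: the induced map on $\mathfrak{n}/[\mathfrak{n},\mathfrak{n}]$ is, up to scaling and conjugation, a nontrivial Jordan block rather than the identity. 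Consequently the statement as written is false and cannot be proved: two of its three families violate the Jacobi identity, and one isomorphism class is missing. Your write-up should state this conclusion instead of hedging on (3) and misidentifying (2).

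What your argument does establish is the corrected classification: type (1) for $x>0$, together with the single algebra displayed above. These are pairwise non-isomorphic by your two invariants. The first is the ratio $x_{22}/x_{11}$, read off the characteristic flag $\mathfrak{n}\supset\mathrm{span}(e_2,e_3,e_4)\supset[\mathfrak{n},\mathfrak{n}]$, on whose quotients every $\mathrm{ad}(X)$ acts by zero. The second is diagonalizability on $\mathfrak{n}/[\mathfrak{n},\mathfrak{n}]$.

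Two steps should be tightened.
\begin{enumerate}
\item[(i)] Lemmas~\ref{lem:b1} and~\ref{lem:b4.5} were computed from the faulty (4.1), so re-derive the block-diagonal reduction directly. Adding a suitable $\mathrm{ad}(X)$ removes the entries $e_1\to e_3$ and $e_1\to e_4$, and the linked pair $e_2\to e_3$, $e_3\to e_4$. Then conjugating by the automorphism $e_2\mapsto e_2+te_4$ removes $e_2\to e_4$, because the diagonal entries at $e_2$ and $e_4$ differ by $2x_{11}\neq0$. The $e_1\to e_4$ entry this creates is again inner, coming from $\mathrm{ad}(e_3)$.
\item[(ii)] The converse of your analogue of Lemma~\ref{lem:3b} needs no appeal to uniqueness. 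A lower-triangular $g_\mathfrak{m}$ lifts to the automorphism $e_1\mapsto y_{11}e_1+y_{21}e_2$, $e_2\mapsto y_{22}e_2$, $e_3\mapsto y_{11}y_{22}e_3$, $e_4\mapsto y_{11}^2y_{22}e_4$. This preserves the block decomposition, so $\lambda g^{-1}D_2g$ is already block diagonal with $\mathfrak{m}$-block $(D_1)_\mathfrak{m}$, and hence equals $D_1$.
\end{enumerate}
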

\subsection{Case of (C)}
$\mathfrak{n}$ has an orthonormal basis $e_1,e_2,e_3,e_4$ with the relations, 
\[
[e_1,e_2]=e_4,\ \text{others are zero.}
\]

Let $\mathfrak{m}$ be a linear subspace of $\mathfrak{n}$ spanned by $e_1,e_2,e_3$ and $\mathfrak{h}$ be a linear subspace of $\mathfrak{n}$ spanned by $e_1,e_2$.
Let $\pi_1$ be an orthogonal projection from $\mathfrak{n}$ onto $\mathfrak{m}$ and $\pi_2$ be an orthogonal projection from $\mathfrak{n}$ onto $\mathfrak{h}$.
Furthermore, an element of $\mathrm{End}(\mathfrak{n})$ and its representation matrix with respect to the basis $e_1, e_2, e_3, e_4$ are identified.
Under this basis, $\delta^+(\mathfrak{n})$ and $\mathrm{Aut}(\mathfrak{n})$ satisfy the following.
\begin{lem}\label{lem:5c}
\begin{enumerate}
\item[(1)]
The representation matrix of $ D\in \delta^+(\mathfrak{n})$ concerning with $e_1,e_2,e_3,e_4$ takes the following form.
 \begin{align}
  D=
  \left(
  \begin{array}{cccc}
     x_{11} &x_{12} &0&0 \\
     x_{21} &x_{22} &0&0 \\
     x_{31} &x_{32} &x_{33}&0 \\
     x_{41} &x_{42} &x_{43}&x_{11}+x_{22}
  \end{array}
 \right).\tag{4.3}
 \end{align}
  In particular, $x_{11}+x_{22}>0$ holds, and all eigenvalues of $D_\mathfrak{m}:=\pi_1\circ D|_{\mathfrak{m}}$ have positive real parts.
 On the contrary, if the representation matrix of $ D\in \mathrm{End}(\mathfrak{n})$ concerning with $e_1,e_2,e_3,e_4$ satisfies equation (4.3), as well as $x_{11}+x_{22}>0$ and each eigenvalue of $D_\mathfrak{m}$ has a positive real part,  then $D\in \delta^+(\mathfrak{n})$.
 \item[(2)]The representation matrix of $ A\in \mathrm{Aut}(\mathfrak{n})$ concerning with $e_1,e_2,e_3,e_4$ takes the following form:
 \begin{align*}
 A=
 \left(
  \begin{array}{cccc}
     y_{11} &y_{12} & 0&0 \\
     y_{21} &y_{22} & 0&0 \\
     y_{31} &y_{32} & y_{33}&0\\
     y_{41} &y_{42} & y_{43}&y_{11}y_{22}-y_{12}y_{21}
  \end{array}
  \right).\tag{4.4}
 \end{align*}
In particular, when we set 
  \[
  A_\mathfrak{m}:=\pi_1\circ A|_{\mathfrak{m}},
 \]
$\det(A_\mathfrak{m})\neq 0$ holds.
On the contrary, if the representation matrix of $ A\in \mathrm{GL}(\mathfrak{n})  $concerning with $e_1,e_2,e_3,e_4$ satisfies equation (4.4), then $A\in \mathrm{Aut}(\mathfrak{n})$. 
 \end{enumerate}
  \end{lem}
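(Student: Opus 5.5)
I will follow the same pattern as the proof of the corresponding lemma in Case (B): first derive the shape of a derivation from the Leibniz rule on the single nontrivial bracket, then derive the shape of an automorphism from multiplicativity, and finally check the converse directions by direct verification.

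For part (1), I write $D(e_j)=\sum_i x_{ij}e_i$ and apply the derivation identity to each pair of basis vectors. The key observation is that $\mathfrak{z}(\mathfrak{n})=\mathrm{span}\{e_3,e_4\}$ and $[\mathfrak{n},\mathfrak{n}]=\mathbb{R}e_4$. Both subspaces are characteristic, so $D$ preserves each of them.

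\begin{itemize}
\item From $D(e_4)=D([e_1,e_2])=[D e_1,e_2]+[e_1,De_2]=(x_{11}+x_{22})e_4$ I get the last column.
\item From $0=D([e_1,e_3])=[De_1,e_3]+[e_1,De_3]=x_{23}e_4$ I get $x_{23}=0$.
\item Similarly, from $0=D([e_2,e_3])=-x_{13}e_4$ I get $x_{13}=0$.
\end{itemize}

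This yields exactly the block lower triangular form (4.3). For the derivation identity itself I only need to check the pairs $(e_1,e_2),(e_1,e_3),(e_2,e_3)$ and the pairs involving $e_4$, since $e_4$ is central and $De_4\in\mathbb{R}e_4$.

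The eigenvalue statement comes from the triangular structure. The characteristic polynomial of $D$ factors as
\[
\chi_{\tilde X}(t)\,(t-x_{33})\,(t-(x_{11}+x_{22})).
\]
Hence the spectrum of $D$ is the spectrum of $D_\mathfrak{m}=\pi_1\circ D|_\mathfrak{m}$ together with $x_{11}+x_{22}$. So $D\in\delta^+(\mathfrak{n})$ holds if and only if $x_{11}+x_{22}>0$ and all eigenvalues of $D_\mathfrak{m}$ have positive real part. This gives both the necessity and the converse at once.

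For part (2), I write $A(e_j)=\sum_i y_{ij}e_i$. Because $A$ preserves the center $\mathrm{span}\{e_3,e_4\}$, I get $y_{13}=y_{23}=y_{14}=y_{24}=0$. Because $A$ preserves the derived algebra $\mathbb{R}e_4$, I get $y_{34}=0$. The relation $A(e_4)=[Ae_1,Ae_2]=(y_{11}y_{22}-y_{12}y_{21})e_4$ then fixes the last column.

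Since $A$ is block lower triangular, its determinant is
\[
\det A=\det(A_\mathfrak{m})\,(y_{11}y_{22}-y_{12}y_{21}).
\]
Invertibility of $A$ therefore forces $\det A_\mathfrak{m}\neq0$.

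For the converse, given $A\in\mathrm{GL}(\mathfrak{n})$ of the form (4.4), I check $[Ae_i,Ae_j]=A[e_i,e_j]$ on the basis pairs:
\begin{itemize}
\item for the pair $(e_1,e_2)$ both sides equal $(y_{11}y_{22}-y_{12}y_{21})e_4$;
\item every pair involving $e_3$ or $e_4$ gives zero on both sides, because $Ae_3,Ae_4\in\mathrm{span}\{e_3,e_4\}$, which is central.
\end{itemize}

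The only mildly delicate point is arguing that the zero pattern of $D$ and $A$ is forced in the $e_3$-column. I would handle this explicitly through the brackets $[e_1,e_3]=[e_2,e_3]=0$, rather than only citing that the center is characteristic, to stay consistent with the computational style of the earlier lemma. Everything else is routine.
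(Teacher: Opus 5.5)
Your proposal is correct and follows essentially the same route as the paper: the Leibniz rule on the pairs $(e_1,e_2)$, $(e_1,e_3)$, $(e_2,e_3)$ gives (4.3), and multiplicativity of $A$ on the same pairs gives (4.4). The only differences are cosmetic. You obtain part of the zero pattern of $A$ from invariance of the center $\mathrm{span}\{e_3,e_4\}$ and of the derived algebra $\mathbb{R}e_4$, whereas the paper solves the linear system coming from $A[e_1,e_3]=A[e_2,e_3]=0$ using $\det A_{\mathfrak{h}}\neq 0$. You also spell out the block-triangular factorizations of the characteristic polynomial and of $\det A$, which the paper uses only tacitly for the eigenvalue claim and for $\det A_{\mathfrak{m}}\neq 0$.
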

\begin{proof}
 We denote that
  \[
  D(e_1)=x_{11}e_1+x_{21}e_2+x_{31}e_3+x_{41}e_4,\ D(e_2)=x_{12}e_1+x_{22}e_2+x_{32}e_3+x_{42}e_4,\]
\[
D(e_3)=x_{13}e_1+x_{23}e_2+x_{33}e_3+x_{43}e_4,
  \]
with respect to $D\in \delta^+(\mathfrak{n})$. The following holds from $D\in \delta^+(\mathfrak{n})$.
\[
 D([e_1,e_2])=[D(e_1),e_2]+[e_1,D(e_2)]=x_{11}e_4+x_{22}e_4=(x_{11}+x_{22})e_4.
 \]
Hence, $D(e_3)=\mathrm{tr}(D_\mathfrak{h})e_4$ where $D_\mathrm{h}:=\pi_2\circ D|_{\mathfrak{h}}$. Therefore, $x_{14},x_{24},x_{34}=0,\  x_{44}=\mathrm{tr}(D_\mathfrak{h})$. Moreover,
\[
 D([e_1,e_3])=[D(e_1),e_3]+[e_1,D(e_3)]=x_{23}e_4.
\]
By $D([e_1,e_3])=0$, $x_{23}=0$. Similarly, by $D([e_2,e_3])=0$, $x_{13}=0$. Therefore, the representation matrix satisfies (4.3).
  Conversely, when the representation matrix of $D\in \mathrm{End}(\mathfrak{n})$ has the form of (4.3), $D$ satisfies
 \[
  D([e_1,e_2])=\mathrm{tr}(D_\mathfrak{h})e_4=[D(e_1),e_2]+[e_1,D(e_2)].
  \]
  Therefore,\ $D\in \delta(\mathfrak{n})$.\ Furthermore, since all eigenvalues of $D_\mathfrak{m}$ have positive real parts by the assumptions, $D\in \delta^+(\mathfrak{n})$ follows.
 
Next, we denote that
 \[
  A(e_1)=y_{11}e_1+y_{21}e_2+y_{31}e_3+y_{41}e_4,\ A(e_2)=y_{12}e_1+y_{22}e_2+y_{32}e_3+y_{42}e_4,
  \]
\[
  A(e_3)=y_{13}e_1+y_{23}e_2+y_{33}e_3+y_{43}e_4,
  \]
with respect to $A\in \mathrm{Aut}(\mathfrak{n})$. The following holds.
  \begin{align*}
 A([e_1,e_2])
 &=[A(e_1),A(e_2)]\\
 &=[y_{11}e_1+y_{21}e_2+y_{31}e_3+y_{41}e_4,y_{12}e_1+y_{22}e_2+y_{32}e_3+y_{42}e_4]\\
 &=[y_{11}e_1,y_{22}e_2]+[y_{21}e_2,y_{12}e_1]\\
 &=y_{11}y_{22}e_4-y_{12}y_{21}e_4\\
 &=\det(A_\mathfrak{h})e_4,
 \end{align*}
where $A_\mathrm{h}:=\pi_2\circ A|_{\mathfrak{h}}$. Hence, $A(e_4)=\det(A_\mathfrak{h})e_4$. Therefore, $y_{14},y_{24},y_{34}=0$ and $y_{44}=\det(A_\mathfrak{h})$. Moreover,
  \begin{align*}
 A([e_1,e_3])
 &=[A(e_1),A(e_3)]\\
 &=[y_{11}e_1+y_{21}e_2+y_{31}e_3+y_{41}e_4,y_{13}e_1+y_{23}e_2+y_{33}e_3+y_{43}e_4]\\
 &=[y_{11}e_1,y_{23}e_2]+[y_{21}e_2,y_{13}e_1]\\
 &=(y_{11}y_{23}-y_{21}y_{13})e_4,\\
 A([e_2,e_3])
 &=[A(e_2),A(e_3)]\\
 &=[y_{12}e_1+y_{22}e_2+y_{32}e_3+y_{42}e_4,y_{13}e_1+y_{23}e_2+y_{33}e_3+y_{43}e_4]\\
 &=[y_{12}e_1,y_{23}e_2]+[y_{22}e_2,y_{13}e_1]\\
 &=(y_{12}y_{23}-y_{22}y_{13})e_4.
 \end{align*}
By $[e_1,e_3]=[e_2,e_3]=0$ and $\det(A_\mathfrak{h})\neq 0$, $y_{13},y_{23}=0$. Therefore, the representation matrix $A$ satisfies (4.4).
  Conversely, when the representation matrix of $A\in \mathrm{GL}(\mathfrak{n})$ has the form of (4.4), $A$ satisfies
  \[
  A([e_1,e_2])=\det(A_\mathfrak{h})e_4=[A(e_1),A(e_2)].
  \]
 Therefore, from the assumptions and $\det(A)\neq 0$, $A\in \mathrm{Aut}(\mathfrak{n})$ holds.
\end{proof}
By Lemma \ref{lem:5c}, the following holds.
\begin{lem}\label{lem:c2}
 For any $D\in \delta^+(\mathfrak{n})$ which is represented in the form (4.3), there exists unique $\Tilde{D}\in \delta^+(\mathfrak{n})$ that is conjugate to $D$ by $\mathrm{Aut}(\mathfrak{n})$ such that the representation matrix of $\Tilde{D}$ concerning with $e_1,e_2,e_3,e_4$ takes the following form:
  \begin{align*}
 \Tilde{D}=
 \left(
  \begin{array}{c|c}
    X &O \\ \hline
   X_1&X_2  
   \end{array}
\right).
 \end{align*}
Here, $X,X_2\in\mathrm{GL}(2,\mathbb{R}),X_1\in\mathrm{M}(2,\mathbb{R})$, and $X$ is expressed in one of the following forms.
\begin{align}
 \left(
  \begin{array}{cc}
    \lambda_1 &0 \\
   0&\lambda_2  
   \end{array}
\right),\ 
\left(
  \begin{array}{cc}
    \lambda_1 &1 \\
   0&\lambda_1  
   \end{array}
\right),\ 
\left(
  \begin{array}{cc}
    \lambda_1 &-\alpha \\
   \alpha&\lambda_1  
   \end{array}
\right),
 \end{align}
where s $0<\lambda_1,\lambda_2,\alpha$.
\end{lem}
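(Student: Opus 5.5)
The plan is to conjugate $D$ by a block-diagonal automorphism whose upper-left $2\times 2$ block is an arbitrary element of $\mathrm{GL}(2,\mathbb{R})$. This lets us bring the upper-left block of $D$ to its real Jordan canonical form, in the same way Proposition \ref{prop:3a} was obtained in the abelian case.

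\textbf{Step 1 (the block structure).} Write $D$ in the form (4.3) as
\[
D=\left(\begin{array}{c|c} X & O\\ \hline X_1 & X_2\end{array}\right),\qquad
X=\left(\begin{array}{cc} x_{11}&x_{12}\\ x_{21}&x_{22}\end{array}\right),\quad
X_2=\left(\begin{array}{cc} x_{33}&0\\ x_{43}&x_{11}+x_{22}\end{array}\right).
\]
Since $D$ is block lower triangular, its eigenvalues are those of $X$ together with $x_{33}$ and $x_{11}+x_{22}$. Because $D\in\delta^+(\mathfrak{n})$, all of these have positive real parts. In particular $X$ and $X_2$ are invertible, and the two eigenvalues of $X$ have positive real parts.

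\textbf{Step 2 (the conjugating automorphism).} For $P=(p_{ij})\in\mathrm{GL}(2,\mathbb{R})$ set
\[
A=\left(\begin{array}{c|c} P & O\\ \hline O & Q\end{array}\right),\qquad Q=\left(\begin{array}{cc}1&0\\0&\det P\end{array}\right).
\]
This matrix has the shape (4.4), with $y_{33}=1$ and $y_{44}=\det P$, and $\det A\neq 0$. So $A\in\mathrm{Aut}(\mathfrak{n})$ by Lemma \ref{lem:5c}(2). A block computation gives
\[
A^{-1}DA=\left(\begin{array}{c|c} P^{-1}XP & O\\ \hline Q^{-1}X_1P & Q^{-1}X_2Q\end{array}\right).
\]
The entry $Q^{-1}X_2Q$ is again lower triangular with diagonal $(x_{33},\,x_{11}+x_{22})$, and $\mathrm{tr}(P^{-1}XP)=x_{11}+x_{22}$. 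Hence $A^{-1}DA$ is again of the form (4.3) and lies in $\delta^+(\mathfrak{n})$, since conjugation by an automorphism preserves derivations and eigenvalues.

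\textbf{Step 3 (normalizing $X$).} It remains to choose $P$ so that $P^{-1}XP$ is one of the three matrices in the statement. This is the real Jordan form, split into three cases.
\begin{enumerate}
\item[(i)] If $X$ is diagonalizable over $\mathbb{R}$, take $P$ to be a matrix of eigenvectors. This gives $\mathrm{diag}(\lambda_1,\lambda_2)$ with $\lambda_i>0$ by Step 1.
\item[(ii)] If $X$ has a single real eigenvalue but is not diagonalizable, take a Jordan basis, rescaled by $\mathrm{diag}(1,c)$, so that the off-diagonal entry becomes $1$.
\item[(iii)] If $X$ has eigenvalues $\lambda_1\pm i\alpha$ with $\alpha\neq 0$, use the real and imaginary parts of a complex eigenvector. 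This gives $\left(\begin{smallmatrix}\lambda_1&-\alpha\\ \alpha&\lambda_1\end{smallmatrix}\right)$. If necessary compose with $\mathrm{diag}(1,-1)$ to make $\alpha>0$. Here $\lambda_1>0$ again by Step 1.
\end{enumerate}

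Uniqueness is to be read as uniqueness of this normal form for $X$: the real Jordan form is unique up to the ordering of $\lambda_1,\lambda_2$ in case (i), which can be fixed by a convention such as $\lambda_1\le\lambda_2$. The lower blocks $X_1,X_2$ are not normalized here; they are handled in later steps.

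I expect the only real point of care to be Step 2. One must check that the chosen $A$ truly satisfies (4.4): the $(4,4)$ entry is forced to equal $\det P$, and this is exactly why $Q$ cannot simply be the identity. One must also check that conjugation keeps the upper-right block zero and the $(3,4)$ entry zero. Both follow because $A$ is block diagonal and $Q$ is diagonal.
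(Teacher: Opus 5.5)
Your proposal is correct and is essentially the paper's own proof. The paper also conjugates by the block-diagonal automorphism $\mathrm{diag}(A_1,1,\det A_1)$, with $A_1\in\mathrm{GL}(2,\mathbb{R})$ bringing the upper-left block to its real Jordan form; it writes $AD=\Tilde{D}A$ where you write $A^{-1}DA$, which is only a change of convention. Your explicit checks that this matrix has the shape (4.4) and that the conjugate stays in $\delta^+(\mathfrak{n})$ fill in details the paper leaves implicit.

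Your reading of ``unique'' as uniqueness of the normal form of $X$ is the sensible one. The paper proves only existence, and the lower blocks are not unique in general: conjugating by the automorphism $\mathrm{diag}(1,1,c,1)$ rescales $x_{31},x_{32},x_{43}$.
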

\begin{proof}
We set
\begin{align*}
\Tilde{X}=
 \left(
  \begin{array}{cc}
    x_{11} &x_{12} \\
   x_{21}&x_{22}  
   \end{array}
\right).
\end{align*}
We have $A_1\in\mathrm{GL}(2,\mathbb{R})$ such that $\Tilde{X}$ is conjugate to one of the matrices in (1) by $A_1$. We define $A$ as

 \begin{align*}
A=
 \left(
  \begin{array}{cc|cc}
    A_1 && \\ \hline
   &&1&0\\
&&0&\det(A_1)  
   \end{array}
\right).
 \end{align*}
Since it becomes $AD=\Tilde{D}A$, $D$ and $\Tilde{D}$ are conjugate in $\mathrm{Aut}(\mathfrak{n})$. 
\end{proof}
From Lemma \ref{lem:c2}, the representation matrix of $D_{\mathfrak{h}}$ can be treated as (1).
Regarding the SNC-algebra of (C), it is classified as follows depending on the value of $x_{33}$.
   {\bf{ (C-1)}} Case of $x_{33}\neq x_{11}+x_{22}$ or $x_{43}=0$
  \begin{lem}
For any $D\in \delta^+(\mathfrak{n})$ which satisfies (C-1), there exists unique $D_1\in \delta^+(\mathfrak{n})$, that is, conjugate to $D$ by $\mathrm{Aut}(\mathfrak{n})$ such that the representation matrix of $D_1$ concerning with $e_1,e_2,e_3,e_4$ takes the following form.
  \begin{align}
  D_1=
  \left(
  \begin{array}{cccc}
     x_{11} &x_{12} &0&0 \\
     x_{21} &x_{22} &0&0 \\
     x_{31} &x_{32} &x_{33}&0 \\
     0&0&0&x_{11}+x_{22}
  \end{array}
 \right).\tag{4.5}
 \end{align}
\end{lem}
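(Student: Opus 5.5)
The plan is to kill the bottom-row entries $x_{41},x_{42},x_{43}$ of $D$ by conjugating with a unipotent automorphism that only perturbs the $e_4$-components, exactly as in Lemma \ref{lem:n3-1} and Lemma \ref{lem:b1}. By Lemma \ref{lem:c2} we may assume $D$ has the form (4.3), with upper-left block $\Tilde{X}=(x_{ij})_{1\le i,j\le 2}$ in one of the normal forms listed there. Put $t:=x_{11}+x_{22}$.

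\textbf{The conjugating automorphism.} For $a,b,c\in\mathbb{R}$ let
\[
A=\left(\begin{array}{cccc}1&0&0&0\\0&1&0&0\\0&0&1&0\\a&b&c&1\end{array}\right).
\]
It has the shape (4.4) with $y_{11}y_{22}-y_{12}y_{21}=1=y_{44}$. Since $\det A=1\neq 0$, Lemma \ref{lem:5c}(2) gives $A\in\mathrm{Aut}(\mathfrak{n})$.

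\textbf{Reducing $AD=D_1A$ to a linear system.} Let $D_1$ be the matrix (4.5). Its first three rows coincide with those of $D$, and their fourth entries vanish. Hence the first three rows of $AD$ and of $D_1A$ agree automatically. Comparing fourth rows, $AD=D_1A$ is equivalent to
\[
(x_{41},x_{42},x_{43},t)+(a,b,c,0)D=(ta,tb,tc,t).
\]
Writing $M=\pi_1\circ D|_{\mathfrak{m}}$ for the $3\times 3$ upper-left block, this becomes $(a,b,c)(M-tI)=-(x_{41},x_{42},x_{43})$. Because $M$ is block lower triangular, the system is triangular and splits into two steps.

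\textbf{Solving for $c$.} The third coordinate gives $c\,(x_{33}-t)=-x_{43}$.
\begin{itemize}
\item If $x_{33}\neq t$, we take $c=-x_{43}/(x_{33}-t)$.
\item If $x_{43}=0$, we take $c=0$.
\end{itemize}
This is precisely where hypothesis (C-1) is used. When $x_{33}=t$ and $x_{43}\neq 0$, the entry $x_{43}$ cannot be removed this way, which is why that case is treated separately.

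\textbf{Solving for $(a,b)$.} The first two coordinates give
\[
(a,b)(\Tilde{X}-tI)=-(x_{41},x_{42})-c\,(x_{31},x_{32}).
\]
This is solvable once $\Tilde{X}-tI$ is invertible. The eigenvalues $\lambda_1,\lambda_2$ of $\Tilde{X}$ satisfy $\lambda_1+\lambda_2=t$, so those of $\Tilde{X}-tI$ are $-\lambda_2,-\lambda_1$. Both have negative real part by Lemma \ref{lem:5c}(1), hence are nonzero. Therefore $\det(\Tilde{X}-tI)=\det\Tilde{X}\neq 0$, and $(a,b)$ is uniquely determined.

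\textbf{Conclusion and uniqueness.} Then $AD=D_1A$, so $D_1=ADA^{-1}$ lies in $\delta^+(\mathfrak{n})$ and has the form (4.5). For uniqueness (in the sense used in the earlier lemmas), note that the entries $x_{ij}$ with $i\le 3$ are unchanged by this family of conjugations. Once the data of (4.5) are fixed, the triangular system above forces the resulting matrix, so the normal form is well defined.

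The only real obstacle is the third coordinate, handled above by hypothesis (C-1). Apart from that, the invertibility of $\Tilde{X}-tI$ is the one point needing the positivity of the real parts of the eigenvalues.
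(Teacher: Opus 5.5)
Your proof is correct and takes essentially the same route as the paper. Both conjugate by an automorphism of shape (4.4) with a free bottom row, reduce $AD=D_1A$ to a triangular linear system, and split on the two alternatives of (C-1) when solving for the $e_3$-coordinate. The only difference is the choice of conjugating matrix: the paper takes its upper-left $3\times 3$ block to be $D_{\mathfrak{m}}$ and its $(4,4)$-entry to be $\det\Tilde{X}$. That matrix is your unipotent matrix composed with $\mathrm{diag}(D_{\mathfrak{m}},\det\Tilde{X})$, an automorphism commuting with $D_1$, so the paper's bottom row $(y_1,y_2,y_3)$ is exactly $\det\Tilde{X}\cdot(a,b,c)$.
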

  \begin{proof}
Case of $x_{33}\neq x_{11}+x_{22}$.
We take an element of $\mathrm{Aut}(\mathfrak{n})$ whose representation matrix concerning with $e_1,e_2,e_3,e_4$ is
  \[A=
  \left(
  \begin{array}{cccc}
     x_{11} &x_{12} & 0 &0\\
     x_{21} &x_{22} & 0 &0\\
     x_{31} &x_{32} & x_{33}&0\\
     y_1 &y_2 &y_3 &x_{11}x_{22}-x_{12}x_{21}
  \end{array}
\right).
  \]
Here, $y_1,y_2,y_3 \in \mathbb{R}$ are
  \begin{align*}
 y_1&=x_{11}\left(x_{41}+\frac{x_{31}x_{43}}{x_{11}+x_{22}-x_{33}}\right)+x_{21}\left(x_{42}+\frac{x_{32}x_{43}}{x_{11}+x_{22}-x_{33}}\right),\\
 y_2&=x_{12}\left(x_{41}+\frac{x_{31}x_{43}}{x_{11}+x_{22}-x_{33}}\right)+x_{22}\left(x_{42}+\frac{x_{32}x_{43}}{x_{11}+x_{22}-x_{33}}\right),\\
 y_3&=\frac{x_{43}(x_{11}x_{22}-x_{12}x_{21})}{x_{11}+x_{22}-x_{33}}.
 \end{align*}
  Since it becomes $AD= D_1A$, $D$ and $D_1$ are conjugate in $\mathrm{Aut}(\mathfrak{n})$. 

Case of $x_{43}=0$.
We take an element of $\mathrm{Aut}(\mathfrak{n})$ whose representation matrix concerning with $e_1,e_2,e_3,e_4$ is
  \[A=
  \left(
  \begin{array}{cccc}
     x_{11} &x_{12} & 0 &0\\
     x_{21} &x_{22} & 0 &0\\
     x_{31} &x_{32} & x_{33}&0\\
     y_1 &y_2 &0&x_{11}x_{22}-x_{12}x_{21}
   \end{array}
\right).
  \]
Here, $y_1,y_2 \in \mathbb{R}$ are
  \begin{align*}
 y_1&=x_{11}x_{41}+x_{21}x_{42},\\
 y_2&=x_{12}x_{41}+x_{22}x_{42}.
 \end{align*}
 Since it becomes $AD= D_1A$, $D$ and $D_1$ are conjugate in $\mathrm{Aut}(\mathfrak{n})$. 
  \end{proof}
Let $\Tilde{X}$ in $\mathrm{GL}(\mathfrak{h})$ as follows.
\[
   \Tilde{X}=
  \left(
  \begin{array}{cc}
     x_{11} &x_{12} \\
     x_{21} &x_{22} 
  \end{array}
 \right).
  \]
When real parts of eigenvalues of $\Tilde{X}$ are denoted as $\lambda_1,\lambda_2$, they are classified into following cases.
 
{\bf{ (C-1-1)}}Case of $x_{31},x_{32}=0$ or $x_{33}\neq \lambda_1,\lambda_2$ or $\lambda_1\neq\lambda_2, x_{33}=\lambda_1,x_{31}=0$ or $\lambda_1\neq\lambda_2, x_{33}=\lambda_2,x_{32}=0$.
\begin{lem}
For any $D\in \delta^+(\mathfrak{n})$ which satisfies (4.5), there exists unique $D_{11}\in \delta^+(\mathfrak{n})$, that is, conjugate to $D$ by $\mathrm{Aut}(\mathfrak{n})$ such that the representation matrix of $D_{11}$ concerning with $e_1,e_2,e_3,e_4$ takes the following form.
  \begin{align}
  D_{11}=
  \left(
  \begin{array}{cccc}
     x_{11} &x_{12} &0&0 \\
     x_{21} &x_{22} &0&0 \\
     0 &0&x_{33}&0 \\
     0&0&0&x_{11}+x_{22}
  \end{array}
 \right).\tag{4.6}
 \end{align} 
\end{lem}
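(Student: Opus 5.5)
We will proceed exactly as in the preceding lemmas: exhibit an explicit element $A\in\mathrm{Aut}(\mathfrak{n})$ of the form (4.4) with $AD=D_{11}A$. By Lemma \ref{lem:c2} we may assume that $\Tilde{X}=\pi_2\circ D|_{\mathfrak{h}}$ is already in one of the normal forms listed there. The $(4,4)$-entry and the last row are untouched by the transformation we use. So the problem reduces to killing the block $(x_{31},x_{32})$ in the third row.

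We look for $A$ unipotent of the form
\[
A=\left(
\begin{array}{cccc}
1&0&0&0\\
0&1&0&0\\
a&b&1&0\\
0&0&0&1
\end{array}\right),
\]
which lies in $\mathrm{Aut}(\mathfrak{n})$ by Lemma \ref{lem:5c}(2), since $y_{11}y_{22}-y_{12}y_{21}=1$. Writing $v=(a,b)$ and $w=(x_{31},x_{32})$, the equation $AD=D_{11}A$ reduces to the single row-vector equation
\[
v\Tilde{X}-x_{33}v=-w,\qquad\text{i.e.}\qquad v(\Tilde{X}-x_{33}I)=-w.
\]
All other entries agree automatically, because the fourth row of both $D$ and $D_{11}$ is $(0,0,0,x_{11}+x_{22})$ and $A$ does not mix $e_4$.

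We then treat the subcases of (C-1-1) in turn.

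\begin{itemize}
\item[(i)] If $x_{31}=x_{32}=0$, take $v=0$.
\item[(ii)] If $x_{33}$ is not an eigenvalue of $\Tilde{X}$, then $\Tilde{X}-x_{33}I$ is invertible and $v=-w(\Tilde{X}-x_{33}I)^{-1}$. In the rotation normal form the eigenvalues are non-real, so this case always applies there.
\item[(iii)] In the remaining subcases $\Tilde{X}=\mathrm{diag}(\lambda_1,\lambda_2)$ with $\lambda_1\neq\lambda_2$. If $x_{33}=\lambda_1$, the system decouples into
\[
a(\lambda_1-x_{33})=-x_{31},\qquad b(\lambda_2-x_{33})=-x_{32}.
\]
The first equation is $0=-x_{31}$, which is exactly the hypothesis $x_{31}=0$, so $a$ is free. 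The second is solvable with $b=-x_{32}/(\lambda_2-\lambda_1)$. The case $x_{33}=\lambda_2$, $x_{32}=0$ is symmetric.
\end{itemize}

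The main obstacle is bookkeeping rather than any conceptual step. The hypotheses of (C-1-1) are phrased in terms of "real parts of eigenvalues", while the argument needs the actual eigenvalues. So we must check that in every normal form where $x_{33}$ coincides with an eigenvalue, the form is the diagonal one. In the Jordan block case with $x_{33}=\lambda_1$, the equations are not covered by the stated hypotheses unless $w=0$. Here we interpret the statement as meaning that the case conditions exclude this situation; that case is presumably handled in (C-1-2).

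Finally, we note that $D_{11}$ still satisfies the positivity conditions of Lemma \ref{lem:5c}(1), because conjugation preserves eigenvalues. Hence $D_{11}\in\delta^+(\mathfrak{n})$.
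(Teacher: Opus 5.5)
Your proposal is correct and is essentially the paper's argument: an explicit automorphism of the form (4.4) that clears $(x_{31},x_{32})$ by solving $v(\Tilde{X}-x_{33}I)=-w$, with the same case split. The paper's $A$ carries nontrivial diagonal blocks such as $\Tilde{X}$, $x_{33}$, $\det\Tilde{X}$, but this only changes $A$ by a factor commuting with $D_{11}$; your use of actual eigenvalues rather than real parts in case (ii) is the cleaner reading (the paper's denominator $(x_{33}-\lambda_1)(x_{33}-\lambda_2)$ equals $\det(\Tilde{X}-x_{33}I)$ only for actual eigenvalues), and the excluded Jordan-block situation you mention is the one treated in (C-1-6)--(C-1-8), not (C-1-2).
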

  \begin{proof}
The case where $x_{31}=x_{32}=0$ is trivial. First, we show the case of $x_{33}\neq \lambda_1,\lambda_2$.
We take an element of $\mathrm{Aut}(\mathfrak{n})$ whose representation matrix concerning with $e_1,e_2,e_3,e_4$ is
  \[A=
  \left(
  \begin{array}{cccc}
     x_{11} &x_{12} & 0 &0\\
     x_{21} &x_{22} & 0 &0\\
     y_1 &y_2 & x_{33}&0\\
     0 &0 &0&\det\Tilde{X}
  \end{array}
\right).
  \]
Here, $y_1,y_2 \in \mathbb{R}$ are
  \begin{align*}
 y_1&=\frac{(x_{11}x_{31}+x_{21}x_{32})(x_{22}-x_{33})-x_{21}(x_{12}x_{31}+x_{22}x_{32})}{(x_{33}-\lambda_1)(x_{33}-\lambda_2)},\\
 y_2&=\frac{(x_{12}x_{31}+x_{22}x_{32})(x_{11}-x_{33})-x_{12}(x_{11}x_{31}+x_{21}x_{32})}{(x_{33}-\lambda_1)(x_{33}-\lambda_2)}.
 \end{align*}
Since it becomes $AD= D_{11}A$, $D$, and $D_{11}$ are conjugate in $\mathrm{Aut}(\mathfrak{n})$. 
Next, we show the case of $\lambda_1\neq\lambda_2, x_{33}=\lambda_1,x_{31}=0$.
The representation matrices of $D$ and $D_{11}$ satisfy the following.
 \begin{align*}
  D=
  \left(
  \begin{array}{cccc}
     \lambda_1 &0 &0&0 \\
     0&\lambda_2 &0&0 \\
     0&x_{32} &\lambda_1&0 \\
     0&0&0&\lambda_1+\lambda_2
  \end{array}
 \right),
 \end{align*}
\begin{align*}
  D_{11}=
  \left(
  \begin{array}{cccc}
  \lambda_1 &0 &0&0 \\
     0&\lambda_2 &0&0 \\
     0&0&\lambda_1&0 \\
     0&0&0&\lambda_1+\lambda_2
  \end{array}
 \right).
 \end{align*} 
We take an element of $\mathrm{Aut}(\mathfrak{n})$ whose representation matrix concerning with $e_1,e_2,e_3,e_4$ is
  \[A=
  \left(
  \begin{array}{cccc}
    \lambda_1 &0 &0&0 \\
     0&\lambda_2 &0&0 \\
     0&\frac{\lambda_1x_{32}}{\lambda_1-\lambda_2}&\lambda_1&0 \\
     0 &0 &0&\lambda_1\lambda_2
  \end{array}
\right).
  \]
Since it becomes $AD= D_{11}A$, $D$ and $D_{11}$ are conjugate in $\mathrm{Aut}(\mathfrak{n})$. 
Last, we show the case of $\lambda_1\neq\lambda_2, x_{33}=\lambda_2,x_{32}=0$.
The representation matrices of $D$ and $D_{11}$ satisfy the following.
 \begin{align*}
  D=
  \left(
  \begin{array}{cccc}
     \lambda_1 &0 &0&0 \\
     0&\lambda_2 &0&0 \\
     x_{31}&0 &\lambda_2&0 \\
     0&0&0&\lambda_1+\lambda_2
  \end{array}
 \right),
 \end{align*}
\begin{align*}
  D_{11}=
  \left(
  \begin{array}{cccc}
  \lambda_1 &0 &0&0 \\
     0&\lambda_2 &0&0 \\
     0&0&\lambda_2&0 \\
     0&0&0&\lambda_1+\lambda_2
  \end{array}
 \right).
 \end{align*} 
We take an element of $\mathrm{Aut}(\mathfrak{n})$ whose representation matrix concerning with $e_1,e_2,e_3,e_4$ is
  \[A=
  \left(
  \begin{array}{cccc}
    \lambda_1 &0 &0&0 \\
     0&\lambda_2 &0&0 \\
     \frac{\lambda_1x_{31}}{\lambda_2-\lambda_1}&0&\lambda_1&0 \\
     0 &0 &0&\lambda_1\lambda_2
  \end{array}
\right).
  \]
Since it becomes $AD= D_{11}A$, $D$ and $D_{11}$ are conjugate in $\mathrm{Aut}(\mathfrak{n})$. 
  \end{proof}
When $D_1\in \delta^+(\mathfrak{n})$ is represented in the form of (4.6), it can be reduced to the classification of the abelian case in the four-dimensional SNC-algebra, so the following holds.
\begin{prop}
The representation matrix of $D\in\delta^+(\mathfrak{n})$ which satisfies (C-1-1), concerning with $e_1,e_2,e_3,e_4$ is conjugate to one of the following.
\begin{align*}
  \left(
  \begin{array}{cccc}
     1 &0&0&0 \\
     0 &x_1 &0&0 \\
     0&0&x_2&0 \\
     0&0&0&1+x_1
  \end{array}
 \right),\ 
 \left(
  \begin{array}{cccc}
     1 &1&0&0 \\
     0 &1&0&0 \\
     0&0&x&0 \\
     0&0&0&2
  \end{array}
 \right),\ 
 \left(
  \begin{array}{cccc}
     1 &-\alpha&0&0 \\
     \alpha &1&0&0 \\
     0&0&x&0 \\
     0&0&0&2
  \end{array}
 \right),
 \end{align*}
 where,\ $0<x_1\leq 1, 0<\alpha,x,x_2$.
\end{prop}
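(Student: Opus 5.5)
By the preceding lemma, every $D$ satisfying (C-1-1) is conjugate by $\mathrm{Aut}(\mathfrak{n})$ to a matrix of the form (4.6). So I only need to classify block-diagonal derivations
\[
D=\mathrm{diag}(\Tilde{X},\,x_{33},\,\mathrm{tr}\Tilde{X}),\qquad \Tilde{X}\in \mathrm{GL}(\mathfrak{h}),
\]
where $\Tilde{X}$ has eigenvalues with positive real parts and $x_{33}>0$. I will do this up to O-equivalence, following the non-abelian four-dimensional argument (Lemma \ref{lem:3b}).

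The first step is to show that, on the set of such block-diagonal $D$, O-equivalence reduces to the following relation: $D_1\sim D_2$ if and only if there exist $g_{\mathfrak h}\in \mathrm{GL}(\mathfrak{h})$, $c\neq 0$ and $\lambda\neq 0$ with
\[
\Tilde{X}_1=\lambda g_{\mathfrak h}^{-1}\Tilde{X}_2 g_{\mathfrak h},\qquad x_{33}^{(1)}=\lambda x_{33}^{(2)}.
\]
For one direction, Lemma \ref{lem:5c}(2) shows that an automorphism of the form $\mathrm{diag}(g_{\mathfrak h},\,c,\,\det g_{\mathfrak h})$ realizes this equivalence. For the converse, take $D_1=g^{-1}\mathrm{ad}(X)g+\lambda g^{-1}D_2g$. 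The image of $\mathrm{ad}(X)$ lies in $\mathbb{R}e_4$. By (4.4), $g$ preserves the flag $\mathfrak{n}\supset \mathrm{span}(e_2,e_3,e_4)\supset\cdots$, more precisely the subspaces $\mathrm{span}(e_3,e_4)$ and $\mathbb{R}e_4$. Projecting onto $\mathfrak{h}$ and onto $\mathrm{span}(e_3,e_4)/\mathbb{R}e_4$ then gives the two conditions: conjugacy of the $\mathfrak h$-blocks up to scaling, and equality of the $e_3$-diagonal entries after scaling by the same $\lambda$.

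The second step is to choose normal forms. By Lemma \ref{lem:c2}, $\Tilde{X}$ is conjugate to one of the following three real Jordan forms:
\[
\mathrm{diag}(\lambda_1,\lambda_2),\qquad \begin{pmatrix}\lambda_1&1\\0&\lambda_1\end{pmatrix},\qquad \begin{pmatrix}\lambda_1&-\alpha\\ \alpha&\lambda_1\end{pmatrix}.
\]
\begin{enumerate}
\item[(i)] In the diagonal case, I swap $e_1$ and $e_2$ (this is an automorphism, since it sends $e_4$ to $-e_4$, which is allowed by (4.4)) to arrange $\lambda_2\le\lambda_1$. I then scale by $\lambda=1/\lambda_1$. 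This gives $\mathrm{diag}(1,x_1,x_2,1+x_1)$ with $0<x_1\le 1$ and $x_2>0$.
\item[(ii)] In the Jordan-block case, scaling by $1/\lambda_1$ and then conjugating by $\mathrm{diag}(1,\lambda_1^{-1})$ on $\mathfrak h$ (extended to an automorphism) restores the off-diagonal entry $1$. This gives the second matrix, with $x=x_{33}/\lambda_1>0$.
\item[(iii)] In the complex case, scaling by $1/\lambda_1$ gives the third matrix, with $\alpha>0$ after possibly conjugating by $e_1\leftrightarrow e_2$ to flip the sign of $\alpha$, and $x>0$.
\end{enumerate}
In each case the $(4,4)$ entry is automatically $\mathrm{tr}$ of the $\mathfrak h$-block.

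The third step, which I expect to be the main obstacle, is showing that distinct parameter values give non-equivalent derivations, that is, that the scaling $\lambda$ is pinned down. By the reduced relation, an equivalence between two normal forms must identify their $\mathfrak h$-blocks up to $\lambda$ and conjugacy. The type (real distinct, Jordan block, or complex) is a conjugacy invariant, so different types are never equivalent. Within a type, I compare the eigenvalue data. In case (i), the unordered set $\{1,x_1\}$, after normalizing its larger element to $1$, forces $\lambda=1$ when $x_1<1$. When $x_1=1$ it forces $\lambda=1$ as well, since the eigenvalues must be positive so $\lambda>0$. In cases (ii) and (iii), the real part $1$ forces $\lambda=1$. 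In case (iii) one must also rule out $\lambda=-1$; this holds because $\lambda<0$ would make the real parts negative. Once $\lambda=1$ is fixed, $x_{33}$, and hence $x$ or $x_2$, must coincide, and in case (iii) $\alpha$ is determined by the eigenvalues $1\pm i\alpha$.
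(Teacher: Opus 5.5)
Your proposal is correct. The paper's own proof is one sentence: once $D$ has the block-diagonal form (4.6), the classification is said to ``reduce to the abelian case in dimension four'', that is, to Jordan forms up to scaling as in Lemma~\ref{lem:ab} and Proposition~\ref{prop:3a}.

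Read literally, that reduction would allow $\mathrm{GL}$-conjugation of the whole $3\times 3$ block on $\mathrm{span}(e_1,e_2,e_3)$. That is not available here. Automorphisms must preserve the centre $\mathrm{span}(e_3,e_4)$, so for instance $e_1\leftrightarrow e_3$ is not allowed.

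Your Step~1 is the analogue of Lemma~\ref{lem:3b}, obtained by passing to $\mathfrak n/\mathrm{span}(e_3,e_4)$ and to $\mathrm{span}(e_3,e_4)/\mathbb{R}e_4$. It identifies the correct relation: $\mathrm{GL}(\mathfrak h)$-conjugacy of the $\mathfrak h$-block, with one common scalar $\lambda$ also acting on $x_{33}$. This is exactly what explains the shape of the stated normal forms. The real Jordan form is taken only on $\mathfrak h$, no $3\times 3$ Jordan block occurs, and $x_2$ is not ordered against $x_1$.

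So you use the same Jordan-form-plus-scaling idea as the paper, but you make the correct symmetry group explicit. Your Step~3 also proves that distinct parameters give inequivalent derivations; the statement does not require this, and the paper does not argue it here.

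One harmless slip: automorphisms of the form (4.4) need not preserve $\mathrm{span}(e_2,e_3,e_4)$ (take $y_{12}\neq 0$). Only $\mathrm{span}(e_3,e_4)$ and $\mathbb{R}e_4$ are preserved, and that is all your argument uses.
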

 {\bf{ (C-1-2)}}Case of the representation matrix of $D$ is given as follows.
  \begin{align*}
  D=
  \left(
  \begin{array}{cccc}
     \lambda_1 &0 &0&0 \\
     0&\lambda_2&0&0 \\
     x_{31} &0&\lambda_1&0 \\
     0&0&0&\lambda_1+\lambda_2
  \end{array}
 \right),\tag{4.7}
 \end{align*}
where, $x_{31}\neq0, \lambda_1\leq \lambda_2$.
 \begin{lem}
For any $D\in \delta^+(\mathfrak{n})$ which satisfies (4.7), there exists unique $D_{12}\in \delta^+(\mathfrak{n})$, that is, conjugate to $D$ by $\mathrm{Aut}(\mathfrak{n})$ such that the representation matrix of $D_{12}$ concerning with $e_1,e_2,e_3,e_4$ takes the following form.
   \begin{align*}
  D_{12}=
  \left(
  \begin{array}{cccc}
     \lambda_1 &0 &0&0 \\
     0&\lambda_2&0&0 \\
     1 &0&\lambda_1&0 \\
     0&0&0&\lambda_1+\lambda_2
  \end{array}
 \right).
 \end{align*}
\end{lem}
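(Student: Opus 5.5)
The plan is to find an explicit element of $\mathrm{Aut}(\mathfrak{n})$ that conjugates $D$ to $D_{12}$, as in the preceding lemmas, and then verify $AD=D_{12}A$. In (4.7) all entries of $D$ off the diagonal vanish except $x_{31}\neq 0$, and $D_{12}$ differs from $D$ only in that this entry is $1$. A diagonal rescaling of the basis vector $e_3$ should therefore suffice. By Lemma \ref{lem:5c}(2), an invertible matrix of the form (4.4) is an automorphism provided its $(4,4)$ entry equals $y_{11}y_{22}-y_{12}y_{21}$. Since $e_3$ is central and does not occur as a bracket, the entry $y_{33}$ is unconstrained apart from being nonzero.

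Concretely, I would take
\[
A=\left(
\begin{array}{cccc}
1&0&0&0\\
0&1&0&0\\
0&0&\frac{1}{x_{31}}&0\\
0&0&0&1
\end{array}
\right),
\]
which is legitimate because $x_{31}\neq 0$. Its $(4,4)$ entry is $1=y_{11}y_{22}-y_{12}y_{21}$, so $A$ has the form (4.4) and $A\in\mathrm{Aut}(\mathfrak{n})$.

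Next I would compare $AD$ and $D_{12}A$ entrywise. Both are lower triangular with diagonal $(\lambda_1,\lambda_2,\lambda_1/x_{31},\lambda_1+\lambda_2)$. The only off-diagonal entry of each is in position $(3,1)$: for $AD$ it is $\frac{1}{x_{31}}\cdot x_{31}=1$, and for $D_{12}A$ it is $1\cdot 1=1$. Hence $AD=D_{12}A$, so $D_{12}=ADA^{-1}$ is conjugate to $D$ by $\mathrm{Aut}(\mathfrak{n})$. Conjugation by an automorphism sends derivations to derivations and preserves eigenvalues, so $D_{12}\in\delta^+(\mathfrak{n})$; this can also be checked directly against Lemma \ref{lem:5c}(1).

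For uniqueness, as in the earlier lemmas, the point is only that $D_{12}$ is a fixed normal form determined by $\lambda_1,\lambda_2$, which are the eigenvalues of $D$ and hence invariant under conjugation. I do not expect a real obstacle here. The one step needing care is confirming that rescaling $e_3$ alone is compatible with the automorphism constraint on the $(4,4)$ entry, and this holds because $e_3$ lies in the center and is not a bracket.
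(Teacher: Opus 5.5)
Your proposal is correct and is essentially the paper's argument: conjugate by a diagonal automorphism that rescales $e_3$. The paper takes $A=\mathrm{diag}(\lambda_1,\lambda_2,\lambda_1/x_{31},\lambda_1\lambda_2)$, which is your $\mathrm{diag}(1,1,1/x_{31},1)$ multiplied by the automorphism $\mathrm{diag}(\lambda_1,\lambda_2,\lambda_1,\lambda_1\lambda_2)$ that commutes with $D_{12}$; your choice is the one the paper itself uses in (C-1-6) and (C-1-7). The paper gives no uniqueness argument, and your eigenvalue remark suffices once you add that $\lambda_1$ is the only eigenvalue of multiplicity at least two in $\{\lambda_1,\lambda_1,\lambda_2,\lambda_1+\lambda_2\}$, so the pair $(\lambda_1,\lambda_2)$ is recovered from the spectrum.
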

\begin{proof}
 We take an element of $\mathrm{Aut}(\mathfrak{n})$ whose representation matrix concerning with $e_1,e_2,e_3,e_4$ is
  \[A=
   \left(
  \begin{array}{cccc}
     \lambda_1 &0 &0&0 \\
     0&\lambda_2&0&0 \\
     0 &0&\frac{\lambda_1}{x_{31}}&0 \\
     0&0&0&\lambda_1\lambda_2
  \end{array}
 \right).
  \]
Since it becomes $AD= D_{12}A$, $D$ and $D_{12}$ are conjugate in $\mathrm{Aut}(\mathfrak{n})$. 
\end{proof}
From the above discussion, the following is satisfied.
\begin{prop}
The representation matrix of $D\in\delta^+(\mathfrak{n})$ which satisfies (C-1-2), concerning with $e_1,e_2,e_3,e_4$ is conjugate with the following.
\begin{align*}
  \left(
  \begin{array}{cccc}
     1 &0 &0&0 \\
     0&x&0&0 \\
     1 &0&1&0 \\
     0&0&0&1+x
  \end{array}
 \right),
 \end{align*}
 where,\ $1\leq x$.
\end{prop}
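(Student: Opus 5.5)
Starting from the preceding lemma, I may assume $D=D_{12}$, with $\lambda_1,\lambda_2>0$ real eigenvalues of $\Tilde{X}$ and $\lambda_1\le\lambda_2$. By Theorem 3.2, isomorphism of the expanded algebras is O-equivalence. The set of inner derivations $\mathrm{ad}(X)$, $X\in\mathfrak{n}$, consists of maps sending $\mathfrak{n}$ into $\mathbb{R}e_4$ and vanishing on $e_3,e_4$. Their only nonzero entries are $(4,1)$ and $(4,2)$, and these entries can already be cleared by $\mathrm{Aut}(\mathfrak{n})$. Hence O-equivalence on matrices of this shape reduces to conjugation by $\mathrm{Aut}(\mathfrak{n})$ combined with multiplication by a nonzero scalar $\lambda$.

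The plan is to normalize by scaling. I take $\lambda=1/\lambda_1>0$. Then $D_{12}$ becomes the matrix with diagonal $1,\ x,\ 1,\ 1+x$, where $x=\lambda_2/\lambda_1\ge 1$, and with $(3,1)$-entry $1/\lambda_1$. Next I apply the preceding lemma again, or directly conjugate by the diagonal automorphism $A=\mathrm{diag}(1,1,c,1)$, which is allowed by (4.4) because its $(4,4)$-entry equals $\det$ of its upper $2\times2$ block. This rescales the $(3,1)$-entry to $1$ and gives exactly the displayed matrix. A negative $\lambda$ is excluded because it would make the eigenvalues negative and leave $\delta^+(\mathfrak{n})$. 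The constraint $x\ge 1$ comes directly from the ordering $\lambda_1\le\lambda_2$ in (4.7).

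The main point needing care is that scaling and conjugation keep the matrix inside case (C-1-2), that is, that the $(3,1)$-entry remains nonzero. This holds because conjugation by a diagonal automorphism only rescales that entry. One should also note, in the spirit of the classification, that $x$ is an invariant. The eigenvalues of $D$ up to a positive scalar are $\{1,1,x,1+x\}$, and $x\ge1$ is determined by them, since the eigenvalue $1$ is the one that occurs twice and carries the nontrivial Jordan block linking $e_1$ and $e_3$. I therefore expect different $x$ to give non-isomorphic algebras. That is not needed for the conjugacy statement itself, which is a short computation of the form $AD=D'A$.
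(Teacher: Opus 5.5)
Your argument is correct and follows the paper's route. The paper's proof consists of the preceding lemma, which conjugates by $\mathrm{diag}(\lambda_1,\lambda_2,\lambda_1/x_{31},\lambda_1\lambda_2)$ to make the $(3,1)$-entry $1$, together with an unstated rescaling by $1/\lambda_1$; so ``conjugate'' in the statement means O-equivalent, and your explicit renormalization by $\mathrm{diag}(1,1,\lambda_1,1)\in\mathrm{Aut}(\mathfrak{n})$ with $x=\lambda_2/\lambda_1\ge 1$ fills in exactly the step the paper leaves as ``from the above discussion.'' Your side remarks on inner derivations and on the invariance of $x$ are not needed here; also, at $x=1$ the eigenvalue $1$ occurs three times rather than twice, though $x$ is still determined.
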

{\bf{ (C-1-3)}}Case of the representation matrix of $D$ is given as follows.
  \begin{align*}
  \left(
  \begin{array}{cccc}
     \alpha&-\beta &0&0 \\
     \beta&\alpha&0&0 \\
     x_{31} &0&\alpha&0 \\
     0&0&0&2\alpha
  \end{array}
 \right),\tag{4.8}
 \end{align*}
where, $0<\beta$.
 \begin{lem}
For any $D\in \delta^+(\mathfrak{n})$ which satisfies (4.8), there exists unique $D_{12}\in \delta^+(\mathfrak{n})$, that is, conjugate to $D$ by $\mathrm{Aut}(\mathfrak{n})$ such that the iepresentation matrix of $D_{12}$ concerning with $e_1,e_2,e_3,e_4$ takes the following form.
   \begin{align*}
  D_{13}=
  \left(
  \begin{array}{cccc}
     \alpha&-\beta &0&0 \\
     \beta&\alpha&0&0 \\
     1 &0&\alpha&0 \\
     0&0&0&2\alpha
  \end{array}
 \right).
 \end{align*}
\end{lem}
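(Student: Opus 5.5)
The plan is to imitate the proof of the preceding lemma for (C-1-2): I will find a diagonal element of $\mathrm{Aut}(\mathfrak{n})$ that rescales only $e_3$. I assume $x_{31}\neq 0$ in (4.8), exactly as in (4.7). If $x_{31}=0$, then $D$ is already of the form (4.6) and is covered by (C-1-1).

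By Lemma \ref{lem:5c}(2), a matrix of the form (4.4) lies in $\mathrm{Aut}(\mathfrak{n})$ as soon as it is invertible. In (4.4) the entry $y_{33}$ is unconstrained, while $y_{44}$ must equal the determinant of the upper left $2\times 2$ block. So I take
\[
A=\left(
  \begin{array}{cccc}
     1&0&0&0 \\
     0&1&0&0 \\
     0&0&\frac{1}{x_{31}}&0 \\
     0&0&0&1
  \end{array}
 \right).
\]
Its upper left block is the identity with determinant $1$, which matches $y_{44}=1$. Hence $A\in\mathrm{Aut}(\mathfrak{n})$. Unlike in (C-1-2), I do not use a diagonal $2\times 2$ block $\mathrm{diag}(\lambda_1,\lambda_2)$ here. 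Such a block would not commute with the rotation block
\[
\left(\begin{array}{cc}\alpha&-\beta\\ \beta&\alpha\end{array}\right),
\]
so the identity on $\mathfrak{h}$ is the safe choice.

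Next I verify $AD=D_{13}A$ on each basis vector.
\begin{itemize}
\item On $e_1$: $AD(e_1)=A(\alpha e_1+\beta e_2+x_{31}e_3)=\alpha e_1+\beta e_2+e_3$, and $D_{13}A(e_1)=D_{13}(e_1)$ is the same vector.
\item On $e_2$: both sides give $-\beta e_1+\alpha e_2$.
\item On $e_3$: both sides give $\frac{\alpha}{x_{31}}e_3$.
\item On $e_4$: both sides give $2\alpha e_4$.
\end{itemize}
Thus $D_{13}=ADA^{-1}$, so $D$ and $D_{13}$ are conjugate in $\mathrm{Aut}(\mathfrak{n})$. In particular $D_{13}\in\delta^+(\mathfrak{n})$, since it has the same eigenvalues as $D$.

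For the uniqueness claim, I read it in the same sense as in the earlier lemmas: within the family (4.8), the normalization $x_{31}=1$ is the distinguished representative. Any conjugation preserving the shape (4.8) and fixing the rotation block can only rescale the $(3,1)$ entry, and that rescaling is used up by setting $x_{31}=1$.

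There is no real obstacle; the only point needing care is choosing $A$ compatibly with the rotation block, as discussed above. A possible sign normalization, or the action of $\lambda$ in O-equivalence on $\alpha$ and $\beta$, is left to the subsequent proposition.
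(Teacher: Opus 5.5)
Your computation is correct: $A=\mathrm{diag}(1,1,1/x_{31},1)$ lies in $\mathrm{Aut}(\mathfrak{n})$ and satisfies $AD=D_{13}A$. The paper uses a different automorphism, the shear $e_2\mapsto e_2+\frac{1-x_{31}}{\beta}e_3$ with all other $e_i$ fixed, i.e.\ the matrix (4.4) with $y_{32}=\frac{1-x_{31}}{\beta}$. Your rescaling needs $x_{31}\neq 0$, while the shear only needs $\beta\neq 0$. So the paper's argument covers every $D$ of the form (4.8), including $x_{31}=0$. Since (4.8) imposes only $0<\beta$, your proof leaves the case $x_{31}=0$ open. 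Observing that this case ``is covered by (C-1-1)'' does not show that such a $D$ is conjugate to $D_{13}$.

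The real error is in your uniqueness paragraph. It is false that a conjugation preserving the shape (4.8) and fixing the rotation block can only rescale the $(3,1)$ entry. Write $D^{(t)}$ for (4.8) with $x_{31}=t$, and $S_c\in\mathrm{Aut}(\mathfrak{n})$ for the map $e_2\mapsto e_2+ce_3$. Checking on $e_1,\dots,e_4$ gives
\[
S_cD^{(t)}S_c^{-1}=D^{(t+\beta c)},
\]
so the $(3,1)$ entry can be shifted by any amount; the paper's proof is the choice $c=(1-t)/\beta$. Taking $t=1$ and $c=-1/\beta$ shows that $D_{13}$ is conjugate to the block-diagonal matrix $\bigl(\begin{smallmatrix}\alpha&-\beta\\ \beta&\alpha\end{smallmatrix}\bigr)\oplus(\alpha)\oplus(2\alpha)$ of type (4.6). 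The underlying reason is that $\alpha$ is not an eigenvalue of the rotation block when $\beta\neq 0$, so the entries $x_{31},x_{32}$ can always be removed.

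Hence $x_{31}=1$ is not a distinguished representative separating this family from (C-1-1), and $\mathfrak{n}(D_{13})$ is isomorphic to the corresponding algebra with $x_{31}=0$. The uniqueness in the lemma is only the trivial one: the target matrix is fixed by $\alpha$ and $\beta>0$, which are determined by the eigenvalues $\alpha\pm i\beta$ of $D$. No argument about the $(3,1)$ entry is needed, and the one you give is wrong.
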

\begin{proof}
 We take an element of $\mathrm{Aut}(\mathfrak{n})$ whose representation matrix concerning with $e_1,e_2,e_3,e_4$ is
  \[A=
   \left(
  \begin{array}{cccc}
    1&0&0&0 \\
    0&1&0&0 \\
    0&\frac{1-x_{31}}{\beta}&1&0 \\
     0&0&0&1
  \end{array}
 \right).
  \]
Since it becomes $AD= D_{13}A$, $D$ and $D_{13}$ are conjugate in $\mathrm{Aut}(\mathfrak{n})$. 
\end{proof}
From the above discussion, the following is satisfied.
\begin{prop}
The representation matrix of $D\in\delta^+(\mathfrak{n})$ which satisfies (C-1-3), concerning with $e_1,e_2,e_3,e_4$ is conjugate with the following.
\begin{align*}
  \left(
  \begin{array}{cccc}
     1 &-\beta &0&0 \\
     \beta&1&0&0 \\
     1 &0&1&0 \\
     0&0&0&2
  \end{array}
 \right),
 \end{align*}
 where,\ $0<\beta$.
\end{prop}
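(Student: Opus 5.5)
The plan is to start from the normal form $D_{13}$ given by the preceding lemma and then use the two remaining freedoms in O-equivalence: multiplying by a nonzero scalar $\lambda$, and conjugating by elements of $\mathrm{Aut}(\mathfrak{n})$ of the form (4.4) from Lemma \ref{lem:5c}. Since $D\in\delta^+(\mathfrak{n})$, Lemma \ref{lem:5c} gives $x_{11}+x_{22}=2\alpha>0$, so $\alpha>0$. By the preceding lemma we may assume $D=D_{13}$, that is, $x_{31}=1$, with $\alpha,\beta$ as in (4.8).

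\textbf{Step 1 (scaling).} Take $\lambda=1/\alpha$ in Theorem 3.2 with $g=\mathrm{id}$ and $X=0$. Then $\lambda D_{13}$ has diagonal entries $1,1,1,2$, off-diagonal rotation entries $\mp\beta/\alpha$, and $(3,1)$-entry $1/\alpha$. After renaming $\beta/\alpha$ as $\beta$, the only remaining discrepancy with the target matrix is the $(3,1)$-entry.

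\textbf{Step 2 (rescaling $e_3$).} Consider $A=\mathrm{diag}(1,1,t,1)$ with $t\neq0$. Its upper-left $2\times2$ block has determinant $1$, which equals the $(4,4)$-entry, and $y_{33}=t$ is unconstrained in (4.4). Hence $A\in\mathrm{Aut}(\mathfrak{n})$ by Lemma \ref{lem:5c}(2). A direct check shows that conjugating by $A$ multiplies the $(3,1)$-entry by $t$ and leaves every other entry unchanged. Choosing $t=\alpha$ therefore turns $1/\alpha$ into $1$, which gives exactly the displayed matrix.

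\textbf{Step 3 (sign of $\beta$).} If the rotation part is written with $\beta<0$, conjugate by $A=\mathrm{diag}(1,-1,1,-1)$. Its block determinant is $-1$, equal to the $(4,4)$-entry, so $A$ is an automorphism by (4.4). Conjugation by $A$ flips the sign of $\beta$. It keeps the $(3,1)$-entry, because both $e_1$ and $e_3$ are fixed by $A$. Since the case $\beta=0$ is excluded in (C-1-3), we may assume $0<\beta$.

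The main obstacle is making sure the reduction to $D_{13}$ is legitimate. In particular, one has to confirm that the preceding lemma's normalization $x_{31}\mapsto 1$ is compatible with keeping the rotation block intact. If the automorphism given there does not work as written, my first attempt would be to replace it with a diagonal scaling of $e_3$, as in Step 2, and, where necessary, a rotation in the $e_1e_2$-plane that commutes with the rotation block. This would absorb any stray $x_{32}$ entry and then rescale $x_{31}$ directly. The remaining verifications, namely $AD=D'A$ in Steps 2 and 3, are routine matrix products.
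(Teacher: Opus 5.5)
Your argument is correct. Its skeleton is what the paper's ``from the above discussion'' relies on: reduce to $D_{13}$ by the lemma for case (4.8), then use the scalar freedom $\lambda=1/\alpha$ of O-equivalence.

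The one real difference is how you bring the $(3,1)$-entry back to $1$ after scaling. The paper leaves this step implicit; since its lemma holds for every $\alpha$, one can scale first and then apply its shear automorphism (the identity except $y_{32}=(1-x_{31})/\beta$). You instead use the diagonal automorphism $\mathrm{diag}(1,1,t,1)$.

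Your worry about that lemma is unfounded. The shear has the form (4.4), the $(3,1)$-entry of $AD$ is $x_{31}+\beta y_{32}=1$, and all other entries of $AD$ and $D_{13}A$ agree.

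As for what each route buys: your rescaling is simpler, but it can only multiply a nonzero entry. The shear shifts the $(3,1)$-entry by $\beta y_{32}$, which is an arbitrary real number. In particular, $y_{32}=-x_{31}/\beta$ makes that entry $0$. So the displayed matrix is also O-equivalent to the third normal form of case (C-1-1) with $x=1$. The Proposition remains true as stated, but $x_{31}\neq 0$ is not an invariant here, and your argument would not reveal this. Concretely, types (5) and (3) with $x=1$ in the paper's final theorem for case (C) are then isomorphic.

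Your Step 3 is harmless but redundant, since $\alpha,\beta>0$ already give $\beta/\alpha>0$.
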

 {\bf{ (C-1-4)}}Case of the representation matrix of $D$ is given as follows.
  \begin{align*}
  D=
  \left(
  \begin{array}{cccc}
     \lambda_1 &0 &0&0 \\
     0&\lambda_2&0&0 \\
     0&x_{32}&\lambda_2&0 \\
     0&0&0&\lambda_1+\lambda_2
  \end{array}
 \right),\tag{4.9}
 \end{align*}
where $x_{32}\neq 0, \lambda_1< \lambda_2$.
 \begin{lem}
 For any $D\in \delta^+(\mathfrak{n})$ which satisfies (4.9), there exists unique $D_{14}\in \delta^+(\mathfrak{n})$, that is, conjugate to $D$ by $\mathrm{Aut}(\mathfrak{n})$ such that the representation matrix of $D_{14}$ concerning with $e_1,e_2,e_3,e_4$ takes the following form.
   \begin{align*}
  D_{14}=
  \left(
  \begin{array}{cccc}  
    \lambda_1 &0 &0&0 \\
     0&\lambda_2&0&0 \\
     0&1&\lambda_2&0 \\
     0&0&0&\lambda_1+\lambda_2
  \end{array}
 \right).
 \end{align*}
\end{lem}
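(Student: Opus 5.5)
The plan is to follow exactly the pattern of the preceding lemmas for (C-1-2) and (C-1-3): exhibit an explicit element $A\in\mathrm{Aut}(\mathfrak{n})$, verify that its representation matrix has the form (4.4) of Lemma \ref{lem:5c}(2), and check the identity $AD=D_{14}A$. Since $D$ in (4.9) differs from $D_{14}$ only in the $(3,2)$-entry, which is $x_{32}$ instead of $1$, a diagonal automorphism that rescales $e_3$ relative to $e_2$ should suffice.

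Concretely, I would take
\[
A=\left(
\begin{array}{cccc}
\lambda_1&0&0&0\\
0&\lambda_2&0&0\\
0&0&\frac{\lambda_2}{x_{32}}&0\\
0&0&0&\lambda_1\lambda_2
\end{array}
\right).
\]
Since $D\in\delta^+(\mathfrak{n})$, the eigenvalues $\lambda_1,\lambda_2$ of $D_{\mathfrak{h}}$ are positive, and $x_{32}\neq 0$ by assumption. Hence $A$ is invertible. Its $(4,4)$-entry equals $y_{11}y_{22}-y_{12}y_{21}=\lambda_1\lambda_2$, so $A$ is of the form (4.4) and therefore lies in $\mathrm{Aut}(\mathfrak{n})$ by Lemma \ref{lem:5c}(2).

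The verification of $AD=D_{14}A$ reduces to comparing entries. Because both $A$ and the diagonal parts are diagonal, every entry except the $(3,2)$-entry agrees trivially. For the $(3,2)$-entry:
\begin{itemize}
\item the entry of $AD$ is $\frac{\lambda_2}{x_{32}}\cdot x_{32}=\lambda_2$;
\item the entry of $D_{14}A$ is $1\cdot\lambda_2=\lambda_2$.
\end{itemize}
Hence $D$ and $D_{14}$ are conjugate in $\mathrm{Aut}(\mathfrak{n})$.

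For uniqueness, I would argue in the same spirit as the earlier lemmas. The diagonal entries of any representative in this shape are the eigenvalues $\lambda_1<\lambda_2$ and $\lambda_1+\lambda_2$, which are invariant under conjugation. The only free off-diagonal entry is the $(3,2)$-entry, and it has been normalized to $1$.

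The only point needing care is the positivity and nonvanishing of $\lambda_2$ and $x_{32}$, so that $A$ is invertible; both are given by the hypotheses. I do not expect any real obstacle here.
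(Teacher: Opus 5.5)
Your proposal is correct and is essentially the paper's proof: the paper uses the same diagonal automorphism $A=\mathrm{diag}(\lambda_1,\lambda_2,\lambda_2/x_{32},\lambda_1\lambda_2)$ of the form (4.4) and verifies $AD=D_{14}A$, where the $(3,2)$-entry you computed is the only nontrivial one. The paper gives no separate uniqueness argument, so your remark is an addition: the diagonal of $D_{14}$ is fixed by the spectrum of $D$, since $\lambda_2$ is the unique double eigenvalue and $\lambda_1$ is the smallest.
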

\begin{proof}
  We take an element of $\mathrm{Aut}(\mathfrak{n})$ whose representation matrix concerning with $e_1,e_2,e_3,e_4$ is
  \[A=
   \left(
  \begin{array}{cccc}
     \lambda_1 &0 &0&0 \\
     0&\lambda_2&0&0 \\
     0 &0&\frac{\lambda_2}{x_{32}}&0 \\
     0&0&0&\lambda_1\lambda_2
  \end{array}
 \right).
  \]
Since it becomes $AD= D_{14}A$, $D$ and $D_{14}$ are conjugate in $\mathrm{Aut}(\mathfrak{n})$. 
\end{proof}
From the above discussion, the following is satisfied.
\begin{prop}
The representation matrix of $D\in\delta^+(\mathfrak{n})$ which satisfies (C-1-4), concerning with $e_1,e_2,e_3,e_4$ is conjugate with the following.
\begin{align*}
  \left(
  \begin{array}{cccc}
     x &0 &0&0 \\
     0&1&0&0 \\
     0 &1&1&0 \\
     0&0&0&1+x
  \end{array}
 \right),
 \end{align*}
 where,\ $0<x\leq 1$.\
\end{prop}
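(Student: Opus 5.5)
The plan is to start from the normal form produced by the preceding lemma for (C-1-4). Any $D\in\delta^+(\mathfrak{n})$ satisfying (4.9) is conjugate under $\mathrm{Aut}(\mathfrak{n})$ to
\[
D_{14}=\left(
\begin{array}{cccc}
\lambda_1&0&0&0\\
0&\lambda_2&0&0\\
0&1&\lambda_2&0\\
0&0&0&\lambda_1+\lambda_2
\end{array}\right),\qquad 0<\lambda_1<\lambda_2 .
\]
Positivity of $\lambda_1,\lambda_2$ comes from Lemma \ref{lem:5c}: in case (C-1) the block is diagonalised via Lemma \ref{lem:c2}, and in (C-1-4) the real parts of the eigenvalues are the diagonal entries. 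Theorem 3.2 then allows replacing $D$ by $\lambda D$ with $\lambda\neq 0$, and we take $\lambda=1/\lambda_2>0$. This multiplies every entry by $1/\lambda_2$, so the diagonal becomes $(x,1,1,1+x)$ with $x=\lambda_1/\lambda_2$. The condition $\lambda_1<\lambda_2$ gives $0<x<1$. The entry in position $(3,2)$ becomes $1/\lambda_2$.

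The next step is to renormalise this entry back to $1$ using the automorphism
\[
A=\mathrm{diag}(1,1,c,1),
\]
which lies in $\mathrm{Aut}(\mathfrak{n})$ by (4.4): its upper-left block has determinant $1$, which matches the $(4,4)$ entry, and $y_{33}=c$ is free. Conjugating by $A$ scales the $(3,2)$ entry by $c$, so $c=\lambda_2$ gives exactly the stated matrix. Throughout I would check that the intermediate matrices stay in the form (4.3), so that Lemma \ref{lem:5c} keeps them in $\delta^+(\mathfrak{n})$.

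The main obstacle is the stated range $0<x\le 1$ versus the strict inequality coming from (4.9). The boundary value $x=1$ corresponds to $\lambda_1=\lambda_2$ with $x_{31}=0$ and $x_{32}\neq 0$. There the first two coordinates can be exchanged by the automorphism sending $e_1\mapsto e_2$, $e_2\mapsto e_1$, $e_4\mapsto -e_4$, which is of the form (4.4) with determinant $-1$ in the $(4,4)$ slot. This carries the $x_{32}$ configuration to an $x_{31}$ one, so I would argue that $x=1$ either belongs to this family or overlaps with (C-1-2). I would therefore state the result with $0<x\le 1$, and note the coincidence at $x=1$ with the (C-1-2) matrix at parameter $1$ as a point to reconcile when non-isomorphism is established later.
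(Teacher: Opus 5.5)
Your argument is correct and follows the paper's route: the preceding lemma gives $D_{14}$, and the paper's ``from the above discussion'' amounts to rescaling by $1/\lambda_2$ (an O-equivalence, which the paper loosely calls conjugacy) and renormalising the $(3,2)$ entry with a diagonal automorphism, which yields $x=\lambda_1/\lambda_2\in(0,1)$. The boundary discussion is not needed for the statement, since $(0,1)\subset(0,1]$. Your observation about it is nevertheless right: the automorphism $e_1\leftrightarrow e_2$, $e_4\mapsto -e_4$ identifies the $x=1$ matrix with the (C-1-2) normal form at $x=1$, so types (4) and (6) of the paper's final theorem give isomorphic algebras at $x=1$.
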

{\bf{ (C-1-5)}}Case of the representation matrix of $D$ is given as follows.
  \begin{align*}
  D=
  \left(
  \begin{array}{cccc}
     \alpha&-\beta &0&0 \\
     \beta&\alpha&0&0 \\
     0&x_{32}&\alpha&0 \\
     0&0&0&2\alpha
  \end{array}
 \right),\tag{4.10}
 \end{align*}
where $x_{32}\neq 0$.
 \begin{lem}
 For any $D\in \delta^+(\mathfrak{n})$ which satisfies (4.10), there exists unique $D_{15}\in \delta^+(\mathfrak{n})$, that is, conjugate to $D$ by $\mathrm{Aut}(\mathfrak{n})$ such that the representation matrix of $D_{15}$ concerning with $e_1,e_2,e_3,e_4$ takes the following form.
   \begin{align*}
  D_{15}=
  \left(
  \begin{array}{cccc}
     \alpha&-\beta &0&0 \\
     \beta&\alpha&0&0 \\
     0&1&\alpha&0 \\
     0&0&0&2\alpha
  \end{array}
 \right).
 \end{align*}
\end{lem}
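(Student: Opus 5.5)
The plan is to imitate the proof of the lemma for (C-1-3): I will exhibit an explicit element $A\in\mathrm{Aut}(\mathfrak{n})$ satisfying $AD=D_{15}A$. I take $A$ to be a unipotent shear that only changes the $(3,\cdot)$ row. The reason is that the rotation block $\left(\begin{smallmatrix}\alpha&-\beta\\ \beta&\alpha\end{smallmatrix}\right)$ mixes $e_1$ and $e_2$. So a shear of $e_1$ into $e_3$ should be able to shift the coefficient $x_{32}$, in the same way that a shear of $e_2$ shifted $x_{31}$ in (C-1-3).

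Concretely, I set
\[
A=\left(
\begin{array}{cccc}
1&0&0&0\\
0&1&0&0\\
t&0&1&0\\
0&0&0&1
\end{array}\right),\qquad t\in\mathbb{R}.
\]
This is of the form (4.4), since its upper-left block is the identity and $y_{44}=1=\det$ of that block. By Lemma \ref{lem:5c}(2) it therefore lies in $\mathrm{Aut}(\mathfrak{n})$.

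Next I compare $AD$ and $D_{15}A$ row by row.
\begin{itemize}
\item Rows $1$, $2$ and $4$ of $A$ are standard basis rows. Hence the corresponding rows of $AD$ are just the rows of $D$.
\item In $D_{15}A$, the only nonzero off-diagonal entry of $A$ sits in position $(3,1)$. It is picked up only by rows of $D_{15}$ with a nonzero third entry, and among rows $1$, $2$, $4$ there are none. So rows $1$, $2$, $4$ of $D_{15}A$ are the rows of $D_{15}$, which agree with those of $D$.
\item For row $3$ of $AD$ I get $t\cdot(\text{row }1\text{ of }D)+(\text{row }3\text{ of }D)=(t\alpha,\ x_{32}-t\beta,\ \alpha,\ 0)$.
\item For row $3$ of $D_{15}A$ I get $(0,1,\alpha,0)A=(\alpha t,\ 1,\ \alpha,\ 0)$.
\end{itemize}

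The two third rows agree exactly when $x_{32}-t\beta=1$. Since $\beta>0$, I choose $t=(x_{32}-1)/\beta$, which gives $AD=D_{15}A$. Thus $D$ and $D_{15}$ are conjugate in $\mathrm{Aut}(\mathfrak{n})$.

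Membership $D_{15}\in\delta^+(\mathfrak{n})$ is immediate from Lemma \ref{lem:5c}(1). Indeed, $D_{15}$ has the form (4.3), and the eigenvalues of $D_{15}$ restricted to $\mathfrak{m}$ are $\alpha\pm i\beta$ and $\alpha$, all with positive real part because $D\in\delta^+(\mathfrak{n})$.

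The only step needing care is the choice of which shear entry to use, $(3,1)$ rather than $(3,2)$. After that, everything is a one-line matrix check, and I expect no real obstacle.
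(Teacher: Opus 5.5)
Your proposal is correct and is essentially the paper's own proof. The paper uses the same unipotent automorphism $A$, the identity plus the entry $\frac{x_{32}-1}{\beta}$ in position $(3,1)$, and simply asserts $AD=D_{15}A$; your row-by-row check and your verification that $A$ has the form (4.4) fill in details the paper leaves to the reader.
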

\begin{proof}
  We take an element of $\mathrm{Aut}(\mathfrak{n})$ whose representation matrix concerning with $e_1,e_2,e_3,e_4$ is
  \[A=
   \left(
  \begin{array}{cccc}
    1&0&0&0 \\
    0&1&0&0 \\
    \frac{x_{32}-1}{\beta}&0&1&0 \\
     0&0&0&1
  \end{array}
 \right).
  \]
Since it becomes $AD= D_{15}A$, $D$ and $D_{15}$ are conjugate in $\mathrm{Aut}(\mathfrak{n})$. 
\end{proof}
From the above discussion, the following is satisfied.
\begin{prop}
The representation matrix of $D\in\delta^+(\mathfrak{n})$ which satisfies (C-1-5), concerning with $e_1,e_2,e_3,e_4$ is conjugate with the following.
\begin{align*}
  \left(
  \begin{array}{cccc}
   1 &-\beta &0&0 \\
     \beta&1&0&0 \\
     0 &1&1&0 \\
     0&0&0&2
  \end{array}
 \right),
 \end{align*}
 where,\ $0<\beta$.\
\end{prop}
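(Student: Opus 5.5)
The plan is to combine the normalization in the preceding lemma with the scaling freedom in O-equivalence (Theorem 3.2 with $g=\mathrm{id}$, $X=0$), together with one sign-changing automorphism. Throughout, "conjugate" is understood as in the earlier propositions: the expanded Lie algebras are isomorphic, that is, the derivations are O-equivalent.

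\emph{Step 1: normalize $x_{32}$.} Start from $D$ of the form (4.10), with $\alpha>0$ (since $D\in\delta^+(\mathfrak{n})$) and $x_{32}\neq 0$. First make $\beta>0$. If $\beta<0$, conjugate by $g=\mathrm{diag}(1,-1,1,-1)$. By (4.4) this $g$ lies in $\mathrm{Aut}(\mathfrak{n})$, since $y_{44}=y_{11}y_{22}-y_{12}y_{21}=-1$. Conjugation by $g$ replaces $\beta$ by $-\beta$ and $x_{32}$ by $-x_{32}$, and the result is still of the form (4.10). Then apply the preceding lemma to obtain $D_{15}$, which has $x_{32}=1$.

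\emph{Step 2: rescale.} Apply Theorem 3.2 with $\lambda=1/\alpha$, $g=\mathrm{id}$ and $X=0$. This shows $D_{15}\stackrel{O}{\sim}\frac{1}{\alpha}D_{15}$. The matrix $\frac{1}{\alpha}D_{15}$ has diagonal $1,1,1,2$, rotation parameter $\beta/\alpha>0$, and $(3,2)$-entry $1/\alpha$. It is again of the form (4.10), with $\alpha$ replaced by $1$.

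\emph{Step 3: renormalize.} Apply the preceding lemma once more, now with $\alpha=1$, $\beta'=\beta/\alpha$ and $x_{32}=1/\alpha\neq 0$. This conjugates $\frac{1}{\alpha}D_{15}$ by an automorphism to the stated matrix with parameter $\beta'>0$. Renaming $\beta'$ as $\beta$ gives the claim.

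The only point needing care is the bookkeeping in Step 1. The automorphism used in the lemma is the shear $e_1\mapsto e_1+\frac{x_{32}-1}{\beta}e_3$, which lies in $\mathrm{Aut}(\mathfrak{n})$ by (4.4) because its $2\times2$ block has determinant $1$. One should check that $AD=D_{15}A$ holds without any sign condition on $\beta$, and that the sign flip by $\mathrm{diag}(1,-1,1,-1)$ preserves the shape (4.10). Both are direct matrix multiplications. I expect no real obstacle, since the argument only chains the preceding lemma with scalar multiplication.
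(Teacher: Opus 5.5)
Your proof is correct and follows the paper's own implicit argument. The paper's ``from the above discussion'' amounts to normalizing the $(3,2)$-entry with the shear $e_1\mapsto e_1+\frac{x_{32}-1}{\beta}e_3$ of the preceding lemma and rescaling by $1/\alpha$ under O-equivalence; your Steps 2 and 3 spell out the bookkeeping the paper skips (rescaling turns that entry into $1/\alpha$, which must be renormalized), while your sign flip and first use of the lemma are harmless but redundant.

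As a side remark, your check of $AD=D_{15}A$ never uses $x_{32}\neq 0$. So the same shear applied with $x_{32}=0$ shows that this normal form is $\mathrm{Aut}(\mathfrak{n})$-conjugate to the matrix with vanishing $(3,2)$-entry, which is a case of (C-1-1). This does not affect the statement you proved, but it does bear on the paper's later claim that the types are pairwise non-isomorphic.
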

 {\bf{ (C-1-6)}}Case of the representation matrix of $D$ is given as follows.
  \begin{align*}
  D=
  \left(
  \begin{array}{cccc}
     \lambda&1 &0&0 \\
     0&\lambda&0&0 \\
     x_{31}&0&\lambda&0 \\
     0&0&0&2\lambda
  \end{array}
 \right),\tag{4.11}
 \end{align*}
where, $x_{31}\neq0$.
 \begin{lem}
 For any $D\in \delta^+(\mathfrak{n})$ which satisfies (4.11), there exists unique $D_{16}\in \delta^+(\mathfrak{n})$, that is, conjugate to $D$ by $\mathrm{Aut}(\mathfrak{n})$ such that the representation matrix of $D_{16}$ concerning with $e_1,e_2,e_3,e_4$ takes the following form.
   \begin{align*}
  D_{16}=
  \left(
  \begin{array}{cccc}
     \lambda&1 &0&0 \\
     0&\lambda&0&0 \\
     1 &0&\lambda&0 \\
     0&0&0&2\lambda
  \end{array}
 \right).
 \end{align*}
\end{lem}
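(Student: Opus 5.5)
The plan is to imitate the proofs of the preceding lemmas in (C-1-2)--(C-1-5): exhibit an explicit element $A\in\mathrm{Aut}(\mathfrak{n})$ and check $AD=D_{16}A$. Since $x_{31}$ is the only entry that differs between $D$ in (4.11) and $D_{16}$, the simplest candidate rescales $e_3$ alone. I would take
\[
A=\left(
\begin{array}{cccc}
1&0&0&0\\
0&1&0&0\\
0&0&\frac{1}{x_{31}}&0\\
0&0&0&1
\end{array}
\right),
\]
which makes sense because $x_{31}\neq 0$. This matrix has the shape (4.4) with $y_{11}=y_{22}=1$ and $y_{12}=y_{21}=0$, so the $(4,4)$ entry must be $y_{11}y_{22}-y_{12}y_{21}=1$, and it is. Moreover $\det A=1/x_{31}\neq0$, so Lemma \ref{lem:5c}(2) gives $A\in\mathrm{Aut}(\mathfrak{n})$.

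Next I would verify $AD=D_{16}A$ on each basis vector, using the convention that the $j$-th column lists the coefficients of $D(e_j)$.
\begin{itemize}
\item[(i)] For $e_1$: $AD(e_1)=A(\lambda e_1+x_{31}e_3)=\lambda e_1+e_3$ and $D_{16}A(e_1)=D_{16}(e_1)=\lambda e_1+e_3$.
\item[(ii)] For $e_2$: both sides equal $e_1+\lambda e_2$, because $A$ fixes $e_1$ and $e_2$.
\item[(iii)] For $e_3$: both sides equal $\frac{\lambda}{x_{31}}e_3$.
\item[(iv)] For $e_4$: both sides equal $2\lambda e_4$.
\end{itemize}
Hence $D_{16}=ADA^{-1}$, so $D$ and $D_{16}$ are conjugate in $\mathrm{Aut}(\mathfrak{n})$.

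To see that $D_{16}\in\delta^+(\mathfrak{n})$, note that it has the shape (4.3) with $x_{11}+x_{22}=2\lambda>0$. Its only eigenvalues are $\lambda$ and $2\lambda$, and $\lambda>0$ because $D\in\delta^+(\mathfrak{n})$, so Lemma \ref{lem:5c}(1) applies.

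Uniqueness is meant in the same sense as in the earlier lemmas: within the normal form fixed by (4.11), the entry $x_{31}$ is normalized to $1$ and the other entries are unchanged. There is no real obstacle here. The only points needing care are the column convention and confirming that the $(4,4)$ entry of $A$ satisfies the automorphism condition in (4.4).
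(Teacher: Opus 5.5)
Your proposal is correct and is the paper's own proof: the paper uses the same automorphism $A=\mathrm{diag}(1,1,1/x_{31},1)$ and simply asserts $AD=D_{16}A$. Your basis-vector check, the verification that $A$ has the form (4.4), and the argument that $D_{16}\in\delta^+(\mathfrak{n})$ fill in what the paper leaves implicit; for uniqueness, you can say more crisply that conjugation preserves the spectrum $\{\lambda,\lambda,\lambda,2\lambda\}$, and this determines the only free parameter $\lambda$ of $D_{16}$.
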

\begin{proof}
We take an element of $\mathrm{Aut}(\mathfrak{n})$ whose representation matrix concerning with $e_1,e_2,e_3,e_4$ is
  \[A=
   \left(
  \begin{array}{cccc}
     1&0 &0&0 \\
     0&1&0&0 \\
     0 &0&\frac{1}{x_{31}}&0 \\
     0&0&0&1
  \end{array}
 \right).
  \]
Since it becomes $AD= D_{16}A$, $D$ and $D_{16}$ are conjugate in $\mathrm{Aut}(\mathfrak{n})$. 
\end{proof}
From the above discussion, the following is satisfied.
\begin{prop}
The representation matrix of $D\in\delta^+(\mathfrak{n})$ which satisfies (C-1-6), concerning with $e_1,e_2,e_3,e_4$ is conjugate with the following.
\begin{align*}
  \left(
  \begin{array}{cccc}
     1 &1 &0&0 \\
     0&1&0&0 \\
     1 &0&1&0 \\
     0&0&0&2
  \end{array}
 \right).
 \end{align*}
\end{prop}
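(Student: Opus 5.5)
Starting from the preceding lemma, every $D$ satisfying (C-1-6) is conjugate under $\mathrm{Aut}(\mathfrak{n})$ to $D_{16}$ with the same $\lambda>0$. The remaining task is to remove $\lambda$ by the scaling part of O-equivalence. By the theorem characterizing isomorphism of expanded algebras (Theorem 3.2), $\mathfrak{n}(D)\cong\mathfrak{n}(\mu D)$ for every $\mu\neq 0$. We therefore take $\mu=1/\lambda$, which is allowed because $\lambda>0$ (this follows from $D\in\delta^+(\mathfrak{n})$, since $\lambda$ is an eigenvalue). Then
\[
\tfrac{1}{\lambda}D_{16}=\left(\begin{array}{cccc}1&\frac1\lambda&0&0\\0&1&0&0\\ \frac1\lambda&0&1&0\\0&0&0&2\end{array}\right).
\]

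Next we restore the two off-diagonal entries to $1$ using a diagonal automorphism. By Lemma \ref{lem:5c}(2), $g=\mathrm{diag}(a,b,c,ab)$ with $abc\neq0$ lies in $\mathrm{Aut}(\mathfrak{n})$. Conjugating a matrix $M$ by $g$ multiplies the $(i,j)$ entry by $g_{ii}/g_{jj}$ (or its inverse, depending on convention), and diagonal entries are unchanged. Hence the $(1,2)$ entry $\frac1\lambda$ becomes $\frac{a}{\lambda b}$, and the $(3,1)$ entry becomes $\frac{c}{\lambda a}$. Choosing $a=1$, $b=1/\lambda$ and $c=\lambda$ makes both entries equal to $1$. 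This gives the matrix stated in the proposition.

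I do not see a real obstacle; the only care needed is bookkeeping. One must check the direction of the conjugation convention ($AD=\tilde DA$) so that the ratios are inverted correctly; if they are, one simply takes the reciprocal choices $b=\lambda$, $c=1/\lambda$. One must also confirm that the $(4,4)$ entry of $g$ equals the determinant of its upper-left $2\times2$ block, which it does since $g_{44}=ab$. Finally, $\mathfrak{n}(D)\cong\mathfrak{n}(\lambda^{-1}D)$ and this conjugation give the O-equivalence claimed, with no free parameter remaining.
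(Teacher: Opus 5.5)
Your argument is correct and follows the paper's route: you reduce to $D_{16}$ by the preceding lemma, then use the scaling freedom in O-equivalence (Theorem 3.2 with $g=\mathrm{id}$, $X=0$) to set $\lambda=1$, which is also why ``conjugate'' in the statement must be read as O-equivalent. The paper only says ``from the above discussion,'' so your explicit diagonal automorphism $\mathrm{diag}(1,1/\lambda,\lambda,1/\lambda)$, which restores both off-diagonal entries to $1$ after scaling, supplies the one computation the paper leaves implicit.
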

 {\bf{ (C-1-7)}}Case of the representation matrix of $D$ is given as follows.
  \begin{align*}
  D=
  \left(
  \begin{array}{cccc}
     \lambda&1 &0&0 \\
     0&\lambda&0&0 \\
     0&x_{32}&\lambda&0 \\
     0&0&0&2\lambda
  \end{array}
 \right),\tag{4.12}
 \end{align*}
where, $x_{32}\neq0$.
 \begin{lem}
 For any $D\in \delta^+(\mathfrak{n})$ which satisfies (4.12), there exists unique $D_{17}\in \delta^+(\mathfrak{n})$, that is, conjugate to $D$ by $\mathrm{Aut}(\mathfrak{n})$ such that the representation matrix of $D_{17}$ concerning with $e_1,e_2,e_3,e_4$ takes the following form.
   \begin{align*}
  D_1=
  \left(
  \begin{array}{cccc}
     \lambda&1 &0&0 \\
     0&\lambda&0&0 \\
     0&1&\lambda&0 \\
     0&0&0&2\lambda
  \end{array}
 \right).
 \end{align*}
\end{lem}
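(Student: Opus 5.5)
The plan is the same as in the preceding lemmas of case (C-1): write down an explicit element $A\in\mathrm{Aut}(\mathfrak{n})$ and check that $AD=D_{17}A$. Since $D$ and $D_{17}$ differ only in the $(3,2)$ entry, a diagonal rescaling should be enough. By Lemma \ref{lem:5c}(2), a diagonal matrix $A=\mathrm{diag}(a,b,c,ab)$ with $abc\neq 0$ is an automorphism of $\mathfrak{n}$. This is because the $(4,4)$ entry must equal $y_{11}y_{22}-y_{12}y_{21}=ab$.

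Conjugating by such an $A$ rescales each entry of $D$ by a ratio of diagonal entries. The entry in position $(i,j)$ of $ADA^{-1}$ is $(a_i/a_j)\,x_{ij}$, where $a_i$ denotes the $i$-th diagonal entry of $A$.
\begin{itemize}
\item The diagonal entries $\lambda,\lambda,\lambda,2\lambda$ are unchanged.
\item The Jordan entry in position $(1,2)$ becomes $a/b$.
\item The entry in position $(3,2)$ becomes $c\,x_{32}/b$.
\end{itemize}
To keep the Jordan block $\left(\begin{smallmatrix}\lambda&1\\0&\lambda\end{smallmatrix}\right)$ intact we need $a=b$. To normalize the $(3,2)$ entry to $1$ we need $c=b/x_{32}$, which is possible precisely because $x_{32}\neq 0$.

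I would therefore take
\[
A=\left(
\begin{array}{cccc}
1&0&0&0\\
0&1&0&0\\
0&0&\frac{1}{x_{32}}&0\\
0&0&0&1
\end{array}
\right).
\]
This has the form (4.4), since its $(4,4)$ entry is $1\cdot 1-0=1$. It is invertible because $x_{32}\neq 0$, so $A\in\mathrm{Aut}(\mathfrak{n})$.

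The verification $AD=D_{17}A$ is then a direct entrywise check. Both sides have first row $(\lambda,1,0,0)$, second row $(0,\lambda,0,0)$, third row $(0,1,\lambda/x_{32},0)$ and fourth row $(0,0,0,2\lambda)$. Hence $D$ and $D_{17}$ are conjugate in $\mathrm{Aut}(\mathfrak{n})$, and $D_{17}\in\delta^+(\mathfrak{n})$ because it has the same eigenvalues as $D$.

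There is no real obstacle beyond confirming that the chosen $A$ respects the constraint $y_{44}=\det(A_\mathfrak{h})$ of Lemma \ref{lem:5c}(2). That is the only point where a careless rescaling, for instance of $e_4$ alone, could fail.
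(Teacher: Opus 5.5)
Your proof is correct and is the paper's argument: the paper uses exactly the automorphism $A=\mathrm{diag}(1,1,1/x_{32},1)$ and verifies $AD=D_{17}A$. Your additional remarks, namely the diagonal-rescaling heuristic and the check that the $(4,4)$ entry equals $y_{11}y_{22}-y_{12}y_{21}$ as form (4.4) requires, spell out what the paper leaves implicit.
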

\begin{proof}
   We take an element of $\mathrm{Aut}(\mathfrak{n})$ whose representation matrix concerning with $e_1,e_2,e_3,e_4$ is
  \[A=
   \left(
  \begin{array}{cccc}
     1&0 &0&0 \\
     0&1&0&0 \\
     0 &0&\frac{1}{x_{32}}&0 \\
     0&0&0&1
  \end{array}
 \right).
  \]
 Since it becomes $AD= D_{17}A$, $D$ and $D_{17}$ are conjugate in $\mathrm{Aut}(\mathfrak{n})$. 
\end{proof}
From the above discussion, the following is satisfied.
\begin{prop}
The representation matrix of $D\in\delta^+(\mathfrak{n})$ which satisfies (C-1-7), concerning with $e_1,e_2,e_3,e_4$ is conjugate with the following.
\begin{align*}
  \left(
  \begin{array}{cccc}
     1 &1 &0&0 \\
     0&1&0&0 \\
     0&1&1&0 \\
     0&0&0&2
  \end{array}
 \right).
 \end{align*}
\end{prop}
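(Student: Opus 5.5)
The statement asserts that every $D\in\delta^+(\mathfrak{n})$ of type (C-1-7), i.e. of the form (4.12) with $x_{32}\neq0$, is O-equivalent to the single matrix with $\lambda=1$ and lower entry $1$. I would first apply the preceding lemma, which conjugates $D$ by an element of $\mathrm{Aut}(\mathfrak{n})$ to $D_{17}$. Then I would show that O-equivalence lets us rescale $\lambda$ to $1$, and finally confirm that the result still lies in $\delta^+(\mathfrak{n})$.

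\emph{Checking the preceding lemma.} I would verify directly that conjugation by $A=\mathrm{diag}(1,1,1/x_{32},1)$ works, rather than rely on the formula as printed. Before that I check $A\in\mathrm{Aut}(\mathfrak{n})$ via (4.4): its upper-left block is the identity, so the $(4,4)$ entry must be $y_{11}y_{22}-y_{12}y_{21}=1$, which it is. The $(3,3)$ entry $y_{33}$ is unconstrained there, so $1/x_{32}$ is allowed. Under the convention that columns record images, $ADA^{-1}$ has the same block structure as $D$, and its $(3,2)$ entry becomes $x_{32}/x_{32}=1$. The diagonal entries and the $(1,2)$ entry are unchanged, since $A$ acts trivially on $\mathfrak{h}$ and on $e_4$. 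So $D$ is conjugate to $D_{17}$ with the same $\lambda$.

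\emph{Normalizing $\lambda$.} By Theorem 3.2 with $X=0$, the derivations $D_{17}$ and $\frac{1}{\lambda}D_{17}$ are O-equivalent, and this is legitimate because $\lambda>0$ for $D\in\delta^+(\mathfrak{n})$. The scaled matrix $\frac{1}{\lambda}D_{17}$ has diagonal $(1,1,1,2)$, but its off-diagonal entries in positions $(1,2)$ and $(3,2)$ are both $1/\lambda$. I would restore both to $1$ with one diagonal automorphism $g=\mathrm{diag}(c,1,c,c)$. This $g$ satisfies (4.4), since its $(4,4)$ entry equals $y_{11}y_{22}=c$. Conjugating by $g$ multiplies the entries in positions $(1,2)$ and $(3,2)$ by the same factor, $c^{\pm1}$ depending on convention. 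Choosing $c=\lambda$ or $c=1/\lambda$ therefore makes both equal to $1$. This gives the displayed matrix.

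\emph{Membership and main obstacle.} Finally, I would check that the displayed matrix lies in $\delta^+(\mathfrak{n})$ using Lemma \ref{lem:5c}(1). It has the form (4.3), $x_{11}+x_{22}=2>0$, and all eigenvalues equal $1$ or $2$. The step most likely to cause trouble is the bookkeeping of which entries a diagonal automorphism scales. The point to verify is that the $(1,2)$ and $(3,2)$ entries scale identically under $\mathrm{diag}(c,1,c,c)$, so that a single parameter normalizes both. Because $e_1$ and $e_3$ carry the same scaling $c$ while $e_2$ carries $1$, the ratio applied to each of these entries is the same, so I expect this step to work.
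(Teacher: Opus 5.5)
Your proof is correct and follows the paper's route: conjugate by $\mathrm{diag}(1,1,1/x_{32},1)\in\mathrm{Aut}(\mathfrak{n})$ to reach $D_{17}$, then use O-equivalence (scaling by $1/\lambda$, which is why ``conjugate'' in the statement must be read as O-equivalent) to normalize $\lambda=1$. The paper leaves the final step to ``the above discussion''. Your explicit renormalization by $\mathrm{diag}(c,1,c,c)$ rescales the $(1,2)$ and $(3,2)$ entries by the same factor, so it supplies exactly the detail needed to restore both to $1$ after scaling.
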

 {\bf{ (C-1-8)}}Case of the representation matrix of $D$ is given as follows.
  \begin{align*}
  D=
  \left(
  \begin{array}{cccc}
     \lambda&1 &0&0 \\
     0&\lambda&0&0 \\
     x_{31}&x_{32}&\lambda&0 \\
     0&0&0&2\lambda
  \end{array}
 \right),\tag{4.13}
 \end{align*}
where, $x_{31},x_{32}\neq0$.
 \begin{lem}
 For any $D\in \delta^+(\mathfrak{n})$ which satisfies (4.13), there exists unique $D_{18}\in \delta^+(\mathfrak{n})$, that is, conjugate to $D$ by $\mathrm{Aut}(\mathfrak{n})$ such that the representation matrix of $D_{18}$ concerning with $e_1,e_2,e_3,e_4$ takes the following form.
   \begin{align*}
  D_{18}=
  \left(
  \begin{array}{cccc}
     \lambda&1 &0&0 \\
     0&\lambda&0&0 \\
     1&1&\lambda&0 \\
     0&0&0&2\lambda
  \end{array}
 \right).
 \end{align*}
\end{lem}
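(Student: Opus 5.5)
The plan is the same as for the preceding cases (C-1-2)--(C-1-7): I exhibit an explicit element $A\in\mathrm{Aut}(\mathfrak{n})$ of the form (4.4) and check directly that $AD=D_{18}A$. I would look for $A$ acting as the identity on $\mathfrak{h}=\mathrm{span}\{e_1,e_2\}$. This choice commutes with the Jordan block $\left(\begin{smallmatrix}\lambda&1\\0&\lambda\end{smallmatrix}\right)$ of $D_{\mathfrak{h}}$. It also forces the $(4,4)$-entry to be $\det(A_{\mathfrak{h}})=1$. All freedom is then placed in the third row $(c_1,c_2,c_3,0)$, and I take the fourth row to be $(0,0,0,1)$. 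By Lemma \ref{lem:5c}(2), such a matrix is automatically in $\mathrm{Aut}(\mathfrak{n})$ as soon as $c_3\neq 0$.

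Next I compare $AD$ and $D_{18}A$ row by row. The first two rows agree trivially, because both matrices have the same upper-left block $\left(\begin{smallmatrix}\lambda&1\\0&\lambda\end{smallmatrix}\right)$ and $A$ is the identity there. The fourth rows also agree, since both equal $(0,0,0,2\lambda)$. The only real condition comes from the third row, which gives the following.
\begin{align*}
\text{column }1:&\quad \lambda c_1+c_3x_{31}=1+\lambda c_1,\\
\text{column }2:&\quad c_1+\lambda c_2+c_3x_{32}=1+\lambda c_2,\\
\text{column }3:&\quad \lambda c_3=\lambda c_3 .
\end{align*}
The first equation yields $c_3=1/x_{31}$, which is legitimate and nonzero because $x_{31}\neq 0$. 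The second yields $c_1=1-x_{32}/x_{31}$, and $c_2$ is free, so I would take $c_2=0$.

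Thus the matrix
\[
A=\left(\begin{array}{cccc}1&0&0&0\\0&1&0&0\\1-\frac{x_{32}}{x_{31}}&0&\frac{1}{x_{31}}&0\\0&0&0&1\end{array}\right)
\]
is invertible, has the form (4.4), and satisfies $AD=D_{18}A$. Hence $D$ and $D_{18}$ are conjugate in $\mathrm{Aut}(\mathfrak{n})$. There is no genuine obstacle here. The only delicate point is choosing the $\mathfrak{h}$-block of $A$ to commute with the Jordan block, so that the off-diagonal entry $1$ of $D_{\mathfrak{h}}$ is preserved. With that choice the system is linear and solvable precisely because $x_{31}\neq0$; the hypothesis $x_{32}\neq0$ is not even needed for the conjugation.
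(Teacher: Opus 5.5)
Your proof is correct and is essentially the paper's argument: both exhibit an explicit automorphism $A$ of the form (4.4) with $AD=D_{18}A$. Only the choice of $A$ differs. The paper uses the $\mathfrak{h}$-block $\left(\begin{smallmatrix}1&x_{32}/x_{31}-1\\0&1\end{smallmatrix}\right)$ (which also commutes with the Jordan block), third row $(0,0,1/x_{31},0)$ and $(4,4)$-entry $1$. You instead keep the identity on $\mathfrak{h}$ and use the shear $e_1\mapsto e_1+(1-x_{32}/x_{31})e_3$.

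Your side remark is also right, and it applies equally to the paper's matrix. The hypothesis $x_{32}\neq 0$ is never used, so the (C-1-6) derivation with the same $\lambda$ is already conjugate to $D_{18}$. Consequently, types (8) and (10) of the case-(C) theorem are in fact isomorphic, contrary to that theorem's non-isomorphism claim.
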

\begin{proof}
  We take an element of $\mathrm{Aut}(\mathfrak{n})$ whose representation matrix concerning with $e_1,e_2,e_3,e_4$ is
  \[A=
   \left(
  \begin{array}{cccc}
     1&\frac{x_{32}}{x_{31}}-1 &0&0 \\
     0&1&0&0 \\
     0 &0&\frac{1}{x_{31}}&0 \\
     0&0&0&1
  \end{array}
 \right),
  \]
Since it becomes $AD= D_{18}A$, $D$ and $D_{18}$ are conjugate in $\mathrm{Aut}(\mathfrak{n})$. 
\end{proof}
From the above discussion, the following is satisfied.
\begin{prop}
The representation matrix of $D\in\delta^+(\mathfrak{n})$ which satisfies (C-1-8), concerning with $e_1,e_2,e_3,e_4$ is conjugate with the following.
\begin{align*}
  \left(
  \begin{array}{cccc}
     1 &1 &0&0 \\
     0&1&0&0 \\
     1&1&1&0 \\
     0&0&0&2
  \end{array}
 \right).
 \end{align*}
\end{prop}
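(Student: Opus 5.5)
The plan is to reduce the matrix (4.13) in two stages: first normalise the eigenvalue $\lambda$ to $1$ using the scalar freedom in O-equivalence, and then invoke the lemma preceding this proposition (the one producing $D_{18}$). Throughout, ``conjugate'' means O-equivalent in the sense of Theorem 3.2, taking $X=0$ there. So the allowed moves are conjugation by elements of $\mathrm{Aut}(\mathfrak{n})$, which by Lemma \ref{lem:5c}(2) are exactly the invertible matrices of the form (4.4), together with multiplication by a nonzero scalar.

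\textbf{Step 1: $\lambda>0$ and rescaling.} Since $D\in\delta^+(\mathfrak{n})$ and $\lambda$ is an eigenvalue of $D$, we have $\lambda>0$. Replace $D$ by $\lambda^{-1}D$. This is allowed by Theorem 3.2 and keeps us inside $\delta^+(\mathfrak{n})$. The resulting matrix has
\begin{itemize}
\item diagonal $1,1,1,2$;
\item $(1,2)$-entry $1/\lambda$;
\item third row $(x_{31}/\lambda,\ x_{32}/\lambda,\ 1,\ 0)$.
\end{itemize}

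\textbf{Step 2: restore the superdiagonal entry.} Conjugate by $g=\mathrm{diag}(1,\lambda,1,\lambda)$. This is of the form (4.4), since its $(4,4)$-entry equals $y_{11}y_{22}-y_{12}y_{21}=\lambda$, so it lies in $\mathrm{Aut}(\mathfrak{n})$. Conjugation by a diagonal matrix $\mathrm{diag}(d_1,\dots,d_4)$ multiplies the $(i,j)$-entry by $d_j/d_i$. Hence $g^{-1}(\lambda^{-1}D)g$ has
\begin{itemize}
\item $(1,2)$-entry $1$;
\item $(3,1)$-entry $x_{31}/\lambda$;
\item $(3,2)$-entry $x_{32}$;
\item the same diagonal $1,1,1,2$.
\end{itemize}
Both lower entries are still nonzero, so this matrix is again of type (4.13), now with $\lambda=1$.

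\textbf{Step 3: apply the $D_{18}$ lemma.} Applying the preceding lemma with $\lambda=1$ yields the automorphism
\[
A=\left(\begin{array}{cccc}1&\frac{x_{32}'}{x_{31}'}-1&0&0\\0&1&0&0\\0&0&\frac{1}{x_{31}'}&0\\0&0&0&1\end{array}\right),
\]
where $x_{31}'=x_{31}/\lambda$ and $x_{32}'=x_{32}$. It conjugates our matrix to the stated normal form with $1$'s at positions $(1,2)$, $(3,1)$ and $(3,2)$.

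The main point to check, and the only real obstacle, is this last conjugation. One must verify that $A$ satisfies (4.4): its $(4,4)$-entry must equal the determinant of its upper-left $2\times2$ block, which is $1$, and it does. One must also verify the identity $AD=D_{18}A$ entrywise. The entries in rows $1,2,4$ are immediate. The third row requires a short computation showing that the $(3,1)$ and $(3,2)$ entries both become $1$. I would carry out this computation explicitly, and if the given $A$ does not work as written, adjust its $(1,2)$ and $(3,3)$ entries by solving the two resulting linear equations. This is possible because $x_{31}'\neq0$.
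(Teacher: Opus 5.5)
Your proposal is correct and is essentially the paper's argument: the paper obtains this proposition directly from the $D_{18}$ lemma together with normalizing $\lambda=1$ by a scalar multiple. Your Steps 1--2 make that normalization explicit, via the rescaling and the automorphism $\mathrm{diag}(1,\lambda,1,\lambda)$ that restores the $(1,2)$-entry. The check you defer goes through with $A$ exactly as written: both $AD$ and $D_{18}A$ have first row $(1,\,x_{32}'/x_{31}',\,0,\,0)$ and third row $(1,\,x_{32}'/x_{31}',\,1/x_{31}',\,0)$, and rows $2$ and $4$ agree trivially, so no adjustment of $A$ is needed.
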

   {\bf{ (C-2)}} Case of $x_{33}=x_{11}+x_{22}$ and $x_{43}\neq0.$
\begin{lem}
  For any $D\in \delta^+(\mathfrak{n})$ which satisfies (C-2), there exists unique $D_1\in \delta^+(\mathfrak{n})$, that is, conjugate to $D$ by $\mathrm{Aut}(\mathfrak{n})$ such that the representation matrix of $D_1$ concerning with $e_1,e_2,e_3,e_4$ takes the following form.
  \begin{align}
  D_2=
  \left(
  \begin{array}{cccc}
     x_{11} &x_{12} &0&0 \\
     x_{21} &x_{22} &0&0 \\
     x_{31} &x_{32} &x_{11}+x_{22}&0 \\
     0&0&x_{43}&x_{11}+x_{22}
  \end{array}
 \right).\tag{4.14}
 \end{align}
\end{lem}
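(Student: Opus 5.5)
The plan is to conjugate $D$ by an automorphism that differs from the identity only in the first two entries of the fourth row, and to choose those two entries so that $x_{41}$ and $x_{42}$ are removed while every other entry of $D$ is left unchanged. This mirrors the proofs of the preceding lemmas in case (C-1). There the hypothesis $x_{33}\neq x_{11}+x_{22}$ allowed $x_{43}$ to be absorbed as well, but now $x_{33}=x_{11}+x_{22}$ and $x_{43}\neq 0$, so we only aim to kill $x_{41},x_{42}$ and keep $x_{43}$.

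Put $t:=x_{11}+x_{22}$ and let $\Tilde{X}$ be the upper-left $2\times 2$ block of $D$, as in Lemma \ref{lem:c2}. We take
\[
A=\left(
\begin{array}{cccc}
1&0&0&0\\
0&1&0&0\\
0&0&1&0\\
a&b&0&1
\end{array}
\right).
\]
This matrix has the form (4.4), since $y_{44}=1=y_{11}y_{22}-y_{12}y_{21}$, and $\det A=1$. By Lemma \ref{lem:5c}(2), $A\in\mathrm{Aut}(\mathfrak{n})$. Write $A=I+E$, where $E$ is supported in row $4$, columns $1,2$. We want $AD=D_2A$, which is equivalent to $D-D_2+ED-D_2E=0$.

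The product $ED$ is supported in row $4$, with row $4$ equal to $(a x_{11}+b x_{21},\ a x_{12}+b x_{22},\ 0,\ 0)$. The fourth column of $D_2$ is $(0,0,0,t)^{t}$, so $D_2E$ is also supported in row $4$, with row $4$ equal to $(ta,\ tb,\ 0,\ 0)$. Since $D-D_2$ has $(x_{41},x_{42},0,0)$ in row $4$ and zeros elsewhere, rows $1$ to $3$ and the $(4,3)$ entry $x_{43}$ are untouched. The whole condition therefore reduces to the linear system
\[
(tI-{}^t\Tilde{X})\left(\begin{array}{c}a\\ b\end{array}\right)=\left(\begin{array}{c}x_{41}\\ x_{42}\end{array}\right).
\]

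The main point is that this system is uniquely solvable, i.e. that $t$ is not an eigenvalue of $\Tilde{X}$. Let $\mu_1,\mu_2$ be the eigenvalues of $\Tilde{X}$; by Lemma \ref{lem:5c}(1) both have positive real part, and $\mu_1+\mu_2=t$.
\begin{enumerate}
\item[(i)] If the eigenvalues are non-real, then $t$ is real and cannot equal either of them.
\item[(ii)] If the eigenvalues are real and $t=\mu_1$, then $\mu_2=0$, which contradicts $\mathrm{Re}\,\mu_2>0$. The case $t=\mu_2$ is symmetric.
\end{enumerate}
Hence $tI-{}^t\Tilde{X}$ is invertible, and $a,b$ are determined uniquely. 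With this choice $AD=D_2A$, so $D$ is conjugate to $D_2$ by $\mathrm{Aut}(\mathfrak{n})$. Moreover, $D_2$ still satisfies (4.3), has $x_{33}=x_{11}+x_{22}$ and $x_{43}\neq 0$, and lies in $\delta^+(\mathfrak{n})$ because its eigenvalues coincide with those of $D$.

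For the uniqueness claim, I read it in the same sense as in the earlier lemmas. Among conjugates of $D$ by automorphisms of this unipotent type, the vanishing of the $(4,1)$ and $(4,2)$ entries fixes $a,b$, because the coefficient matrix above is invertible. Consequently $D_2$ is uniquely determined once the remaining entries of $D$ are kept.
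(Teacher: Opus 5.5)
Your proof is correct for the form (4.14) as displayed. It uses the same mechanism as the paper: conjugation by a unipotent automorphism $I+E$, with $E$ supported in rows $3,4$ and columns $1,2$, whose entries come from a linear system that is solvable because $tI-\Tilde{X}$ is invertible. The difference is that the paper's $A$ also has entries $y_1,y_2$ in row $3$. In your column notation these solve $(tI-{}^t\Tilde{X})\,{}^t(y_1,y_2)={}^t(x_{31},x_{32})$. The row-$4$ entries then solve $(tI-{}^t\Tilde{X})\,{}^t(y_3,y_4)={}^t(x_{41}-x_{43}y_1,\ x_{42}-x_{43}y_2)$, where the correction comes from the $(4,3)$ entry. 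So the paper's conjugate also has vanishing $(3,1)$ and $(3,2)$ entries, i.e.\ it equals $\Tilde{X}\oplus\left(\begin{array}{cc} t&0\\ x_{43}&t\end{array}\right)$. The display (4.14) understates what the paper's matrix achieves. The subsequent Proposition for (C-2) actually needs this stronger form, since all its representatives have zero $(3,1)$ and $(3,2)$ entries. Your version would therefore need the row-$3$ entries added before it can feed into that Proposition. A simplification applies to both arguments: since $t=\mathrm{tr}\,\Tilde{X}$, the matrix $tI-\Tilde{X}$ is the adjugate of $\Tilde{X}$. Hence $\det(tI-\Tilde{X})=\det\Tilde{X}>0$ and $(tI-\Tilde{X})^{-1}=\Tilde{X}/\det\Tilde{X}$. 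This gives invertibility without your real/non-real case split and explains the $\det\Tilde{X}$ denominators in the paper's formulas. Your restricted reading of ``unique'' is reasonable, since the paper gives no uniqueness argument. Uniqueness among all matrices of this shape in the $\mathrm{Aut}(\mathfrak{n})$-orbit actually fails: conjugating by $\mathrm{diag}(1,1,c,1)\in\mathrm{Aut}(\mathfrak{n})$ multiplies $x_{31},x_{32}$ by $c$ and $x_{43}$ by $c^{-1}$.
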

\begin{proof}
We take an element of $\mathrm{Aut}(\mathfrak{n})$ whose representation matrix concerning with $e_1,e_2,e_3,e_4$ is
  \[A=
   \left(
  \begin{array}{cccc}
     1&0&0&0 \\
     0&1&0&0 \\
     y_1&y_2&1&0 \\
     y_3&y_4&0&1
  \end{array}
 \right),
  \]
where
\[
\Tilde{X}=
  \left(
  \begin{array}{cc}
     x_{11} &x_{12} \\
     x_{21} &x_{22} 
  \end{array}
 \right).
  \]
Here, $y_1, y_2, y_3 \in \mathbb{R}$ are
 \begin{align*}
 y_1&=\frac{x_{11}x_{31}+x_{21}x_{32}}{\det\Tilde{X}},\\
 y_2&=\frac{x_{12}x_{31}+x_{22}x_{32}}{\det\Tilde{X}},\\
y_3&=\frac{x_{11}(X_{41}-x_{43}y_1)+x_{21}(x_{42}-x_{43}y_2)}{\det\Tilde{X}},\\
y_4&=\frac{x_{12}(X_{41}-x_{43}y_1)+x_{22}(x_{42}-x_{43}y_2)}{\det\Tilde{X}}.
 \end{align*}
Since it becomes $AD_2= D_{21}A$, $D$ and $D_{11}$ are conjugate in $\mathrm{Aut}(\mathfrak{n})$. 
\end{proof}
Furthermore, since transformations by a constant multiple are conjugate, we set $x_{43}=1$ below. Considering it in the same way as Proposition \ref{prop:3b}, the following holds.
\begin{prop}
The representation matrix of $D\in\delta^+(\mathfrak{n})$ which satisfies (C-2), concerning with $e_1,e_2,e_3,e_4$ is conjugate to one of the following.
\begin{align*}
  \left(
  \begin{array}{cccc}
     x &0&0&0 \\
     0 &1 &0&0 \\
     0&0&x+1&0 \\
     0&0&1&x+1
  \end{array}
 \right),\ 
 \left(
  \begin{array}{cccc}
     \frac{1}{2} &1&0&0 \\
     0 & \frac{1}{2} &0&0 \\
     0&0&1&0 \\
     0&0&1&1
  \end{array}
 \right),\ 
 \left(
  \begin{array}{cccc}
     \frac{1}{2} &-\alpha&0&0 \\
     \alpha &\frac{1}{2}&0&0 \\
     0&0&1&0 \\
     0&0&1&1
  \end{array}
 \right),
 \end{align*}
 where, $0<\alpha,x$.
\end{prop}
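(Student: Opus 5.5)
Starting from the normal form (4.14) of the preceding lemma, I will first remove the entries $x_{31},x_{32}$ and then reduce the remaining $2\times 2$ block $D_\mathfrak{h}=\pi_2\circ D|_{\mathfrak{h}}$ to a normal form up to conjugation and scaling. The leftover parameter $x_{43}\neq 0$ is set to $1$. The point is that on this stratum $x_{33}=x_{44}=\mathrm{tr}(D_\mathfrak{h})$. So the subspace spanned by $e_3,e_4$ carries a single Jordan block $\begin{pmatrix}t&0\\1&t\end{pmatrix}$ with $t=\mathrm{tr}(D_\mathfrak{h})$, and the problem behaves like the three-dimensional Heisenberg case of Proposition \ref{prop:3b}, with $e_3$ playing the role of an extra generator tied to the centre.

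\textbf{Step 1 (killing $x_{31},x_{32}$).} I conjugate by an automorphism of the form (4.4) that is the identity on $\mathfrak{h}$ and on $e_3,e_4$, with lower-left entries $y_{31},y_{32}$ in the $e_3$-row and compensating $y_{41},y_{42}$ in the $e_4$-row. Comparing $AD=\tilde D A$ on the $e_3$-row gives $(y_{31},y_{32})(D_\mathfrak{h}-tI)=(x_{31},x_{32})$. Since the eigenvalues of $D_\mathfrak{h}$ have positive real parts $\lambda_1,\lambda_2$, the value $t=\lambda_1+\lambda_2$ is not an eigenvalue of $D_\mathfrak{h}$. Hence $D_\mathfrak{h}-tI$ is invertible and this equation is solvable.

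The $e_4$-row then gives $(y_{41},y_{42})(D_\mathfrak{h}-tI)=(\ast,\ast)$, which is solvable for the same reason, so $x_{41},x_{42}$ stay zero. Next, rescaling $e_3$ (the entry $y_{33}$ is free in (4.4)) normalizes $x_{43}$ to $1$, since $y_{33}$ multiplies $x_{43}$ by $\det(A_\mathfrak{h})/y_{33}$.

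\textbf{Step 2 (reduction to $2\times2$ normal forms).} With $D$ block lower-triangular as $\mathrm{diag}(D_\mathfrak{h}, \begin{pmatrix}t&0\\1&t\end{pmatrix})$, I use automorphisms $A=\mathrm{diag}(g,\ c,\ \det g)$ with $c=\det g$, so that the entry $x_{43}=1$ is preserved, together with overall scaling $\lambda$ as permitted by Theorem 3.2. These realize arbitrary $\mathrm{GL}(2)$-conjugation of $D_\mathfrak{h}$ and positive scaling, exactly as in Lemma \ref{lem:3b}. I then normalize $\mathrm{tr}(D_\mathfrak{h})=1$, giving $x_{33}=x_{44}=1$ after rescaling back $x_{43}$, and take the real Jordan form of $D_\mathfrak{h}$. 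There are three possibilities:
\begin{itemize}
\item[(i)] diagonal, $\mathrm{diag}(1-x,x)$, equivalently the stated $\mathrm{diag}(x,1)$ after a different scaling choice;
\item[(ii)] a single Jordan block with eigenvalue $\tfrac12$;
\item[(iii)] a complex pair $\tfrac12\pm i\alpha$.
\end{itemize}
To match the listed first form I will scale so that one eigenvalue is $1$, and use a permutation of $e_1,e_2$ (an automorphism up to the sign of $e_4$, which I correct by the sign of $c$) to order them.

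\textbf{Main obstacle.} The delicate step is verifying that the $x_{43}$ normalization and the $e_4$-row compatibility survive the conjugation in Step 2. This is because $A$ must have $(4,4)$-entry exactly $\det g$, which forces $c=\det g$ in order to keep $x_{43}=1$. When $\det g<0$, as for the swap of $e_1,e_2$, I will instead use the overall scaling $\lambda$ or the sign flip of $e_3$ and check the compatibility directly. The parameter range $0<x$ also needs care, since the symmetry $x\leftrightarrow 1/x$ coming from swapping and rescaling must be tracked. I plan to state the range exactly as displayed and leave the uniqueness question to the subsequent classification theorem.
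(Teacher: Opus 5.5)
Your proposal is correct and follows the paper's own argument: the automorphism $\begin{pmatrix} I & 0\\ Y & I\end{pmatrix}$ in the paper's (C-2) lemma is exactly your Step 1, whose $\det\tilde X$ denominators come from $\mathrm{tr}(X)I-X=\mathrm{adj}(X)$ and whose $y_1,y_2$ already remove $x_{31},x_{32}$; the paper then rescales $x_{43}$ to $1$ and reduces $D_\mathfrak{h}$ as in Proposition \ref{prop:3b}, just as you do. Your ``main obstacle'' does not arise, because $y_{33}$ may be any nonzero real in (4.4), so $\mathrm{diag}(g,\det g,\det g)$ is an automorphism preserving $x_{43}=1$ for every $g\in\mathrm{GL}(2,\mathbb{R})$, including $\det g<0$ such as the swap of $e_1,e_2$ or $\mathrm{diag}(1,-1)$ used to make $\alpha>0$.
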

From the above discussion, the following is satisfied.
 \begin{thm}
Let $\mathfrak{g}$ be an SNC-algebra of dimension five with non-abelian derived algebra $\mathfrak{n}=[\mathfrak{g},\mathfrak{g}]$, which has a basis $e_1,e_2,e_3,e_4$ with the relations
\[
[e_1,e_2]=e_4,\ \text{others are zero.}
\]
Then there exists $e_5\in \mathfrak{g}\backslash\mathfrak{n}$ whose bracket operations are given by one of the following.
\begin{enumerate}
\item[(1)]
$ [e_5,e_1]=e_1,\ [e_5,e_2] =x_1e_2,\ [e_5,e_3] =x_2e_3,\ [e_5,e_4]=(1+x_1)e_4, 0<x_1,x_2.$
\item[(2)]
$ [e_5,e_1]=e_1,\ [e_5,e_2] =e_1+e_2,\ [e_5,e_3] =xe_3,\ [e_5,e_4]=2e_4, 0<x.$
\item[(3)]
 $ [e_5,e_1]=e_1+\alpha e_2,\ [e_5,e_2] =-\alpha e_1+e_2,\ [e_5,e_3] =xe_3,\ [e_5,e_4]=2e_4, 0<x,\alpha$
\item[(4)]
$ [e_5,e_1]=e_1+e_3,\ [e_5,e_2] =xe_2,\ [e_5,e_3] =e_3,\ [e_5,e_4]=(1+x)e_4, 1\leq x.$
\item[(5)]
$ [e_5,e_1]=e_1+\alpha e_2+e_3,\ [e_5,e_2] =-\alpha e_1+e_2,\ [e_5,e_3] =e_3,\ [e_5,e_4]=2e_4, 0<\alpha.$
\item[(6)]
$ [e_5,e_1]=xe_1,\ [e_5,e_2] =e_2+e_3,\ [e_5,e_3] =e_3,\ [e_5,e_4]=(1+x)e_4, 0<x \leq 1.$
\item[(7)]
$ [e_5,e_1]=e_1+\alpha e_2,\ [e_5,e_2] =-\alpha e_1+e_2+e_3,\ [e_5,e_3] =e_3,\ [e_5,e_4]=2e_4, 0<\alpha.$
\item[(8)]
$ [e_5,e_1]=e_1+e_3,\ [e_5,e_2] =e_1+e_2,\ [e_5,e_3] =e_3,\ [e_5,e_4]=2e_4.$
\item[(9)]
$ [e_5,e_1]=e_1,\ [e_5,e_2] =e_1+e_2+e_3,\ [e_5,e_3] =e_3,\ [e_5,e_4]=2e_4.$
\item[(10)]
$ [e_5,e_1]=e_1+e_3,\ [e_5,e_2] =e_1+e_2+e_3,\ [e_5,e_3] =e_3,\ [e_5,e_4]=2e_4.$
\item[(11)]
$ [e_5,e_1]=xe_1,\ [e_5,e_2] =e_2,\ [e_5,e_3] =(x+1)e_3+e_4,\ [e_5,e_4]=(x+1)e_4, 0<x.$
\item[(12)]
$ [e_5,e_1]=\frac{1}{2}e_1,\ [e_5,e_2] =e_1+\frac{1}{2}e_2,\ [e_5,e_3] =e_3+e_4,\ [e_5,e_4]=e_4.$
\item[(13)]
$ [e_5,e_1]=\frac{1}{2}e_1+\alpha e_2,\ [e_5,e_2] =-\alpha e_1+\frac{1}{2}e_2,\ [e_5,e_3] =e_3+e_4,\ [e_5,e_4]=e_4.$
\end{enumerate}
Any two Lie algebras in different types or with different parameters are not isomorphic to each other.
\end{thm}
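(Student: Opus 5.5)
By Theorem \ref{thm:douti}, $\mathfrak{g}\cong\mathfrak{n}(D)$ for some $D\in\delta^+(\mathfrak{n})$. By Theorem 3.2, isomorphism classes of such $\mathfrak{g}$ correspond to O-equivalence classes in $\delta^+(\mathfrak{n})$. So the theorem splits into two parts. The existence part says every O-class contains one of the thirteen normal forms; reading off $[e_5,e_j]=D(e_j)$ from the columns of the matrices then gives brackets (1)--(13). The uniqueness part says distinct normal forms are not O-equivalent.

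\emph{Existence.} I would assemble the reductions of this subsection. By Lemma \ref{lem:5c}, $D$ has the form (4.3). By Lemma \ref{lem:c2}, the block $\Tilde X=D_{\mathfrak{h}}$ may be put in real Jordan form. The cases then split as follows.
\begin{itemize}
\item Case (C-1) removes the fourth row. For the diagonal block with $x_{31}=x_{32}=0$, the matrix $\mathrm{diag}(\Tilde X,x_{33})$ is reduced exactly as in the abelian four-dimensional case, except that only $\mathrm{GL}(\mathfrak h)\times\mathrm{GL}(\mathbb{R}e_3)$ is allowed. Dividing by the first eigenvalue yields types (1)--(3).
\item The non-removable off-diagonal entries in row three occur exactly when $x_{33}$ equals a real part of an eigenvalue of $\Tilde X$. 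These are Cases (C-1-2)--(C-1-8), and after scaling to $\lambda_1=1$ (or $\alpha=1$, or $\lambda=1$) they give types (4)--(10).
\item Case (C-2) gives (4.14) with $x_{43}=1$. There the $\mathfrak h$-block is reduced as in Proposition \ref{prop:3b}, giving types (11)--(13).
\end{itemize}
Two small points need checking. First, the inner derivations $\mathrm{ad}(X)$ only alter the $e_4$-row entries $x_{41},x_{42}$, which are already normalized away. Second, the scaling $\lambda$ combined with $\mathrm{Aut}(\mathfrak n)$ achieves the stated parameter ranges, for example $1\le x$ in (4) and $0<x\le 1$ in (6), with the boundary value $x=1$ landing in (4).

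\emph{Uniqueness.} Here I would use O-equivalence invariants. Since $\mathrm{ad}(X)$ maps $\mathfrak n$ into $\mathbb{R}e_4$ and kills $e_3,e_4$, the characteristic polynomial of $D$ up to positive-or-negative scaling is an O-invariant. Another invariant is the Jordan structure of $D$, where $D$ is required to preserve the flag $\mathfrak z=\langle e_3,e_4\rangle\supset[\mathfrak n,\mathfrak n]=\langle e_4\rangle$. Two further invariants are the induced maps on $\mathfrak n/\mathfrak z\cong\mathfrak h$ and on $\mathfrak z/[\mathfrak n,\mathfrak n]$. I would then separate the families as follows.
\begin{itemize}
\item Types (11)--(13) are exactly those where $D|_{\mathfrak z}$ is not semisimple.
\item Types (4)--(10) are those where $D$ restricted to the $(\mathfrak h\oplus\mathbb{R}e_3)$-quotient by $e_4$ is not semisimple. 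Among them, the type is determined by the Jordan type of $D_{\mathfrak h}$ and by which eigenvector $e_3$ is glued to.
\item Within each type, the normalized spectrum fixes the parameters once the normalization has been made, for example the first eigenvalue being $1$, or the ordering $x\le1$.
\end{itemize}

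\emph{Main obstacle.} I expect the hardest part to be in uniqueness: showing that (8), (9) and (10) are pairwise non-equivalent. All three have the same spectrum $\{1,1,1,2\}$ on $\mathfrak n$ and $\Tilde X$ of the same Jordan type. My first attempt would be to compare the dimensions of $\ker(D-I)$ and $\ker(D-I)^2$ after projecting away $e_4$, and to check whether $\ker(D-I)$ meets $\mathfrak z$ or $\mathfrak h+\mathfrak z$. These subspaces are $\mathrm{Aut}(\mathfrak n)$-invariant, so they are O-invariants. If this is inconclusive, I would solve $AD=\lambda D'A$ directly with $A$ of the form (4.4). The scalar $\lambda$ is forced to be $1$ by the spectrum, and I would look for a contradiction in the entries $y_{31},y_{32},y_{12}$.
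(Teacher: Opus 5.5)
\textbf{The existence half is fine; the uniqueness half cannot be proved, because it is false as stated.} Your existence argument follows the paper's own chain of reductions: the form (4.3), the Jordan form of $\tilde X$ from Lemma~\ref{lem:c2}, then the (C-1)/(C-2) splitting. Every O-class does meet the list. The paper itself gives no uniqueness argument; it only writes ``from the above discussion''.

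The error you inherit from the paper is the criterion that row-three entries are non-removable exactly when $x_{33}$ equals a \emph{real part} of an eigenvalue of $\tilde X$. The map $e_1\mapsto e_1+ce_3$, $e_2\mapsto e_2+de_3$ (fixing $e_3,e_4$) is an automorphism because $e_3$ is central. Modulo the $e_4$-row, it changes $r=(x_{31},x_{32})$ into $r-(c,d)(\tilde X-x_{33}I)$. So $r$ can be killed whenever $x_{33}$ is not an actual eigenvalue of $\tilde X$. In particular this happens in the complex case, where $\tilde X-\alpha I$ is invertible. For a Jordan block, only $x_{31}$ survives. Concretely:
\begin{itemize}
\item Replacing $e_2$ by $e_2+\alpha^{-1}e_3$ in (5), or $e_1$ by $e_1-\alpha^{-1}e_3$ in (7), turns both into (3) with $x=1$.
\item Replacing $e_1$ by $e_1+e_3$ turns (9) into (2) with $x=1$, and (10) into (8).
\end{itemize}
The paper's own conjugations in (C-1-3) and (C-1-5) already show this. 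If the free entry is chosen as $-x_{31}/\beta$, resp.\ $x_{32}/\beta$, they send $x_{31}$, resp.\ $x_{32}$, to $0$ instead of $1$.

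So the ``main obstacle'' you single out is an impossibility, and your own invariant detects it. Let $\bar D$ be the map induced on $\mathfrak n/\mathbb{R}e_4$. The Jordan type of $\bar D-I$ is a single $3$-block for both (8) and (10). It is $2+1$ for (9), exactly as for (2) at $x=1$.

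Several other claims in your plan also fail.
\begin{itemize}
\item Your separation ``(4)--(10) are those with $\bar D$ non-semisimple'' fails in both directions. $\bar D$ is already non-semisimple in (2). In (5) and (7), $\bar D$ has the three distinct eigenvalues $1\pm i\alpha$ and $1$, so it is semisimple.
\item ``The normalized spectrum fixes the parameters'' fails for the stated ranges, because the order of the eigenvalues of $\tilde X$ is not an invariant. Use the automorphism $e_1\leftrightarrow e_2$, $e_4\mapsto -e_4$ (rescaling $e_3$ when $x_{43}\neq0$), composed with rescaling by $1/x_1$, resp.\ $1/x$. This identifies (1) at $(x_1,x_2)$ with (1) at $(1/x_1,x_2/x_1)$, and (11) at $x$ with (11) at $1/x$.
\item (6) at $x=1$ is (4) at $x=1$. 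You noticed this yourself, but it contradicts the stated range $0<x\le 1$ in (6).
\end{itemize}

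What can actually be proved is the statement with (5), (7), (9), (10) deleted and the ranges cut to $0<x_1\le1$ in (1), $0<x<1$ in (6), and $0<x\le1$ in (11). For that corrected list, your invariants suffice to separate all classes:
\begin{itemize}
\item the joint spectrum on $\mathfrak n/\mathfrak z$ and $\mathfrak z/[\mathfrak n,\mathfrak n]$, up to a common positive scaling;
\item semisimplicity of $D|_{\mathfrak z}$;
\item the Jordan types of $\tilde X$ and of $\bar D$.
\end{itemize}
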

From the above, the classification of  SNC-algebras in dimension five has been completed.
\section{SNC-algebras and Ricci curvature tensors}
\subsection{Ricci curvature tensors of Lie groups with left invariant metric}

Let an $n$-dimensional Lie group $G$ with a left invariant metric $\langle \ ,\ \rangle$. We denote by $\mathfrak{g}$ the Lie algebra of $G$. Consider the Levi-Civita connection $\nabla$ as a bilinear map $\nabla :\mathfrak{g}\times \mathfrak{g} \to \mathfrak{g}$. Since, if we express the Levi-Civita connection $\nabla$ by separating it into its symmetric and skew-symmetric parts, it becomes as follows.
\[
\nabla_XY=U(X,Y)+\frac{1}{2}[X,Y] \tag{5.1}
\]
for $X,Y\in \mathfrak{g}$.
Here, $U:\mathfrak{g} \times \mathfrak{g} \rightarrow \mathfrak{g}$ is given as follows. 
\[
\langle U(X,Y),Z\rangle=\frac{1}{2}\langle X,[Z,Y]\rangle+\frac{1}{2}\langle Y,[Z,X]\rangle  \tag{5.2}
\]
for $X,Y,Z\in \mathfrak{g}$.
Let $\{e_1,\ ...\ ,e_m\}$ be an orthonormal basis of $\mathfrak{g}$, then the Ricci curvature tensor $\text{Ric}(X,Y)$ for $X,Y\in\mathfrak{g}$ is given by
\[
\text{Ric}(X,Y)=\sum_{i=1}^m\langle R(e_i,X)Y,e_i\rangle.
\]
Here,
 \begin{align*}
 R(e_i,X)Y&=\nabla_{e_i}\nabla_XY-\nabla_X\nabla_{e_i}Y-\nabla_{[e_i,X]}Y\\
 &=\nabla_{e_i}(U(X,Y)+\frac{1}{2}[X,Y])-\nabla_X(U(e_i,Y)+\frac{1}{2}[e_i,Y])\\
&\ \ \ \ -(U([e_i,X],Y)+\frac{1}{2}[[e_i,X],Y]).\\
 \end{align*}
Furthermore, since $X\langle Y,Z\rangle=\langle \nabla_XY,Z\rangle+\langle Y,\nabla_XZ\rangle$,
\begin{align*}
 &\langle\nabla_{e_i}(U(X,Y)+\frac{1}{2}[X,Y]),e_i\rangle\\
 &=e_i\langle(U(X,Y)+\frac{1}{2}[X,Y],e_i\rangle-\langle(U(X,Y)+\frac{1}{2}[X,Y],\nabla_{e_i}e_i\rangle\\
&=\frac{1}{2}e_i\{\langle X,[e_i,Y]\rangle+\langle Y,[e_i,X]\rangle+\langle e_i,[X,Y]\rangle\}-\langle(U(X,Y)+\frac{1}{2}[X,Y],U(e_i,e_i)\rangle\\
&=-\langle(U(X,Y)+\frac{1}{2}[X,Y],U(e_i,e_i)\rangle\\
&=-\langle(U(X,Y),U(e_i,e_i)\rangle-\frac{1}{2}\langle e_i,[[X,Y],e_i]\rangle.
 \end{align*}
\begin{align*}
 &-\langle \nabla_X(U(e_i,Y)+\frac{1}{2}[e_i,Y]),e_i\rangle \\
 &=-X\langle(U(e_i,Y)+\frac{1}{2}[e_i,Y],e_i\rangle+\langle(U(e_i,Y)+\frac{1}{2}[e_i,Y],\nabla_Xe_i\rangle\\
&=-X\langle e_i,[e_i,Y]\rangle+\langle U(e_i,Y)+\frac{1}{2}[e_i,Y],U(X,e_i)+\frac{1}{2}[X,e_i]\rangle\\
&=\langle U(e_i,Y)+\frac{1}{2}[e_i,Y],U(X,e_i)+\frac{1}{2}[X,e_i]\rangle\\
&=\langle U(e_i,Y),U(X,e_i)\rangle+\frac{1}{2}\langle U(e_i,Y),[X,e_i]\rangle +\frac{1}{2}\langle [e_i,Y],U(X,e_i)\rangle +\frac{1}{4}\langle [e_i,y],[X,e_i]\rangle\\
&=\langle U(e_i,Y),U(X,e_i)\rangle+\frac{1}{4}\langle e_i,[[X,e_i],Y]\rangle +\frac{1}{4}\langle Y,[[X,e_i],e_i]\rangle \\
&\ \ \ \ +\frac{1}{4}\langle X,[[e_i,Y],e_i]\rangle +\frac{1}{4}\langle e_i,[[e_i,Y],X]\rangle +\frac{1}{4}\langle [e_i,Y],[X,e_i]\rangle.
 \end{align*}
\begin{align*}
 -\langle \nabla_{[e_i,X]}Y,e_i\rangle &=-\langle U([e_i,X],Y),e_i\rangle -\frac{1}{2}\langle [[e_i,X],Y],e_i\rangle \\
 &=-\frac{1}{2}\langle [e_i,X],[e_i,Y]\rangle-\frac{1}{2}\langle Y,[e_i,[e_i,X]]\rangle -\frac{1}{2}\langle [[e_i,X],Y],e_i\rangle.
 \end{align*}
From the above, $\text{Ric}(X,Y)$ is expressed as follows.
\begin{align*}
\text{Ric}(X,Y)=&\sum_{i=1}^m-\langle(U(X,Y),U(e_i,e_i)\rangle-\frac{1}{2}\langle e_i,[[X,Y],e_i]\rangle+\langle U(e_i,Y),U(e_i,X)\rangle\\
&-\frac{3}{4}\langle [[e_i,X],Y],e_i\rangle-\frac{1}{4}\langle Y,[e_i,[e_i,X]]\rangle+\frac{1}{4}\langle X,[[e_i,Y],e_i]\rangle\\
&+\frac{1}{4}\langle e_i,[[e_i,Y],X]\rangle-\frac{3}{4}\langle [e_i,X],[e_i,Y]\rangle. \tag{5.3}
 \end{align*}
\subsection{Ricci curvature tensors of  four-dimensional SNC-algebras}
In this subsection, we calculate the Ricci curvature tensors of SNC-algebras classified in Theorem 3.7 and Theorem 3.13.

Case of Theorem 3.7(1)

We define the orthonormal basis $e_1, e_2, e_3, e_4$ of $\mathfrak{n}$ as follows.
\begin{center}
  $ [e_4,e_1]=xe_1,\ [e_4,e_2] =ye_2,\   [e_4,e_3]=e_3,$ others are zero,
\end{center}
where $0<x\le y\le 1$.

From (5.2), each $U(e_i,e_j)$ is given by the following.
\begin{center}
 $U(e_1,e_1)=xe_4,\ U(e_2,e_2)=ye_4,\ U(e_3,e_3)=e_4,$\\
 $U(e_1,e_4)=-\frac{1}{2}xe_1,\ U(e_2,e_4)=-\frac{1}{2}ye_2,\ U(e_3,e_4)=-\frac{1}{2}e_3,$\\
 $U(e_1,e_2)=U(e_1,e_3)=U(e_2,e_3)=U(e_4,e_4)=0.$
\end{center}
Therefore, from (5.3), Ricci curvatures with respect to the basis are given as follows.
\begin{align*}
&\text{Ric}(e_1,e_1)=-x(x+y+1),\quad \text{Ric}(e_2,e_2)=-y(x+y+1),\\
&\text{Ric}(e_3,e_3)=-(x+y+1),\quad \text{Ric}(e_4,e_4)=-(x^2+y^2+1),\\
&\text{Ric}(e_1,e_2)=\text{Ric}(e_1,e_3)=\text{Ric}(e_1,e_4)=\text{Ric}(e_2,e_3)=\text{Ric}(e_2,e_4)=\text{Ric}(e_3,e_4)=0.
 \end{align*}
In particular, $\text{Ric}(e_i,e_j)\le0$. Furthermore, when $x=y=1$,
\[
\text{Ric}(e_i,e_j)=-3\langle e_i,e_j\rangle
\]
holds. Thus, when $x=y=1$, the SNC-algebra gives an Einstein manifold.

Case of Theorem 3.7(2)

We define the orthonormal basis $e_1, e_2, e_3, e_4$ such that it satisfies the following.
\begin{center}
  $ [e_4,e_1]=ze_1,\ [e_4,e_2] =e_2,\   [e_4,e_3]=e_2+e_3,\ 0<z$, others are zero.
\end{center}
From (5.2), each $U(e_i,e_j)$ is given by the following.
\begin{center}
 $U(e_1,e_1)=ze_4,\ U(e_2,e_2)=e_4,\ U(e_3,e_3)=e_4,$\\
 $U(e_1,e_4)=-\frac{1}{2}ze_1,\ U(e_2,e_4)=-\frac{1}{2}e_2-\frac{1}{2}e_3,\ U(e_3,e_4)=-\frac{1}{2}e_3,\ U(e_2,e_3)=\frac{1}{2}e_4,$\\
 $U(e_1,e_2)=U(e_1,e_3)=U(e_4,e_4)=0.$
\end{center}
Therefore, from (5.3), Ricci curvatures with respect to the basis are given as follows.
\begin{align*}
&\text{Ric}(e_1,e_1)=-z(z+2),\  \text{Ric}(e_2,e_2)=-z-\frac{3}{2},\  \text{Ric}(e_3,e_3)=-z-\frac{5}{2},\\
&\text{Ric}(e_4,e_4)=-z^2-\frac{5}{2},\  \text{Ric}(e_2,e_3)=-\frac{1}{2}z-1,\\
&\text{Ric}(e_1,e_2)=\text{Ric}(e_1,e_3)=\text{Ric}(e_1,e_4)=\text{Ric}(e_2,e_4)=\text{Ric}(e_3,e_4)=0.
 \end{align*}
In particular, $\text{Ric}(e_i,e_j)\le0$. In this case, since $0<z$, no Einstein manifold exsists.

Case of Theorem 3.7(3)

We define the orthonormal basis $e_1, e_2, e_3, e_4$ such that it satisfies the following.
\begin{center}
  $ [e_4,e_1]=e_1,\ [e_4,e_2] =e_1+e_2,\   [e_4,e_3]=e_2+e_3$,\ others are zero.
\end{center}
From (5.2), each $U(e_i,e_j)$ is given by the following.
\begin{center}
 $U(e_1,e_1)=e_4,\ U(e_2,e_2)=e_4,\ U(e_3,e_3)=e_4,$\\
 $U(e_1,e_4)=-\frac{1}{2}e_1-\frac{1}{2}e_2,\ U(e_2,e_4)=-\frac{1}{2}e_2-\frac{1}{2}e_3,\ U(e_3,e_4)=-\frac{1}{2}e_3,\ $\\
 $U(e_1,e_2)=\frac{1}{2}e_4,\ U(e_2,e_3)=\frac{1}{2}e_4,\ U(e_1,e_3)=U(e_4,e_4)=0.$
\end{center}
Therefore, from (5.3), Ricci curvatures with respect to the basis are given as follows.
\begin{align*}
&\text{Ric}(e_1,e_1)=-\frac{5}{2},\ \text{Ric}(e_2,e_2)=-3,\ \text{Ric}(e_3,e_3)=-\frac{7}{2},\\
&\text{Ric}(e_4,e_4)=-4,\ \text{Ric}(e_1,e_2)=-\frac{3}{2},\ \text{Ric}(e_2,e_3)=-\frac{3}{2},\\
&\text{Ric}(e_1,e_3)=\text{Ric}(e_1,e_4)=\text{Ric}(e_2,e_4)=\text{Ric}(e_3,e_4)=0.
 \end{align*}
In particular, $\text{Ric}(e_i,e_j)\le0$. In this case, no Einstein manifold exsists.

Case of Theorem 3.7(4)

We define the orthonormal basis $e_1, e_2, e_3, e_4$ such that it satisfies the following.
\begin{center}
  $ [e_4,e_1]=\alpha e_1+\beta e_2,\ [e_4,e_2] =-\beta e_1+\alpha e_2,\   [e_4,e_3]=e_3,\ 0<\alpha,\ \beta$, others are zero.
\end{center}
From (5.2), each $U(e_i,e_j)$ is given by the following.
\begin{center}
 $U(e_1,e_1)=\alpha e_4,\ U(e_2,e_2)=\alpha e_4,\ U(e_3,e_3)=e_4,$\\
 $U(e_1,e_4)=-\frac{1}{2}\alpha e_1+\frac{1}{2}\beta e_2,\ U(e_2,e_4)=-\frac{1}{2}\beta e_1-\frac{1}{2}\alpha e_2,\ U(e_3,e_4)=-\frac{1}{2}e_3,$\\
 $U(e_1,e_2)=U(e_1,e_3)=U(e_2,e_3)=U(e_4,e_4)=0.$
\end{center}
Therefore, from (5.3), Ricci curvatures with respect to the basis are given as follows.
\begin{align*}
&\text{Ric}(e_1,e_1)=-2\alpha^2-\alpha,\ \text{Ric}(e_2,e_2)=-2\alpha^2-\alpha,\\
&\text{Ric}(e_3,e_3)=-2\alpha-1,\ \text{Ric}(e_4,e_4)=-2\alpha^2-1,\\
&\text{Ric}(e_1,e_2)=\text{Ric}(e_1,e_3)=\text{Ric}(e_1,e_4)=\text{Ric}(e_2,e_3)=\text{Ric}(e_2,e_4)=\text{Ric}(e_3,e_4)=0.
 \end{align*}
In particular, $\text{Ric}(e_i,e_j)\le0$. Furthermore, when $\alpha=1$,
\[\text{Ric}(e_i,e_j)=-3\langle e_i,e_j\rangle\]
holds. Thus, when $\alpha=1$, the SNC-algebra gives an Einstein manifold.

Case of Theorem 3.13(1)

We define the orthonormal basis $e_1, e_2, e_3, e_4$ such that it satisfies the following.
\begin{center}
  $[e_1,e_2]=e_3,\ [e_4,e_1]=(1-x)e_1,\ [e_4,e_2] =xe_2,\   [e_4,e_3]=e_3,\ 0<x\le \frac{1}{2}$, others are zero.
\end{center}
From (5.2), each $U(e_i,e_j)$ is given by the following.
\begin{center}
 $U(e_1,e_1)=(1-x)e_4,\ U(e_2,e_2)=xe_4,\ U(e_3,e_3)=e_4,$\\
 $U(e_1,e_3)=-\frac{1}{2}e_2,\ U(e_2,e_3)=\frac{1}{2}e_1,$ 
 $U(e_1,e_4)=-\frac{1}{2}(1-x)e_1,\ U(e_2,e_4)=-\frac{1}{2}xe_2,\ U(e_3,e_4)=-\frac{1}{2}e_3,$\\
 $U(e_1,e_2)=U(e_4,e_4)=0$.
\end{center}
Therefore, from (5.3), Ricci curvatures with respect to the basis are given as follows.
\begin{align*}
&\text{Ric}(e_1,e_1)=-2(1-x)-\frac{1}{2},\ \text{Ric}(e_2,e_2)=-2x-\frac{1}{2},\\
&\text{Ric}(e_3,e_3)=-\frac{3}{2},\ \text{Ric}(e_4,e_4)=-\{(1-x)^2+x^2+1\},\\
&\text{Ric}(e_1,e_2)=\text{Ric}(e_1,e_3)=\text{Ric}(e_1,e_4)=\text{Ric}(e_2,e_3)=\text{Ric}(e_2,e_4)=\text{Ric}(e_3,e_4)=0.
 \end{align*}
In particular, $\text{Ric}(e_i,e_j)\le0$. Furthermore, when $x=\frac{1}{2}$,
\[\text{Ric}(e_i,e_j)=-\frac{3}{2}\langle e_i,e_j\rangle\]
holds. Thus, when $x=\frac{1}{2}$, the SNC-algebra gives an Einstein manifold.

Case of Theorem 3.13(2)

We define the orthonormal basis $e_1, e_2, e_3, e_4$ such that it satisfies the following.
\begin{center}
  $[e_1,e_2]=e_3,\ [e_4,e_1]=\frac{1}{2}e_1,\ [e_4,e_2] =e_1+\frac{1}{2}e_2,\   [e_4,e_3]=e_3$,\ others are zero.
\end{center}
From (5.2), each $U(e_i,e_j)$ is given by the following.
\begin{center}
 $U(e_1,e_1)=\frac{1}{2}e_4,\ U(e_2,e_2)=\frac{1}{2}e_4,\ U(e_3,e_3)=e_4,$\\
 $U(e_1,e_2)=\frac{1}{2}e_4,\ U(e_1,e_3)=-\frac{1}{2}e_2,\ U(e_2,e_3)=\frac{1}{2}e_1,$
 $U(e_1,e_4)=-\frac{1}{4}e_1-\frac{2}e_2-,\ U(e_2,e_4)=-\frac{1}{4}e_2,\ U(e_3,e_4)=-\frac{1}{2}e_3,\ U(e_4,e_4)=0.$
\end{center}
Therefore, from (5.3), Ricci curvatures with respect to the basis are given as follows.
\begin{align*}
&\text{Ric}(e_1,e_1)=-1,\ \text{Ric}(e_2,e_2)=-2,\\
&\text{Ric}(e_3,e_3)=-\frac{3}{2},\ \text{Ric}(e_4,e_4)=-2,\ \text{Ric}(e_1,e_2)=-1,\\
&\text{Ric}(e_1,e_3)=\text{Ric}(e_1,e_4)=\text{Ric}(e_2,e_3)=\text{Ric}(e_2,e_4)=\text{Ric}(e_3,e_4)=0.
 \end{align*}
In particular, $\text{Ric}(e_i,e_j)\le0$. In this case, no Einstein manifold exsists.

Case of Theorem 3.13(3)

We define the orthonormal basis $e_1, e_2, e_3, e_4$ such that it satisfies the following.
\begin{center}
  $[e_1,e_2]=e_3,\ [e_4,e_1]=\frac{1}{2}e_1+\alpha e_2,\ [e_4,e_2] =-\alpha e_1+\frac{1}{2}e_2,\   [e_4,e_3]=e_3$,\ others are zero.
\end{center}
From (5.2), each $U(e_i,e_j)$ is given by the following.
\begin{center}
 $U(e_1,e_1)=\frac{1}{2}e_4,\ U(e_2,e_2)=\frac{1}{2}e_4,\ U(e_3,e_3)=e_4,$\\
 $U(e_1,e_3)=-\frac{1}{2}e_2,\ U(e_2,e_3)=\frac{1}{2}e_1,$
 $U(e_1,e_4)=-\frac{1}{4}e_1+\frac{1}{2}\alpha e_2,\ U(e_2,e_4)=-\frac{1}{2}\alpha e_1-\frac{1}{4}e_2,\ U(e_3,e_4)=-\frac{1}{2}e_3,$\\
 $U(e_1,e_2)=U(e_4,e_4)=0.$
\end{center}
Therefore, from (5.3), Ricci curvatures with respect to the basis are given as follows.
\begin{align*}
&\text{Ric}(e_1,e_1)=-\frac{3}{2},\  \text{Ric}(e_2,e_2)=-\frac{3}{2},\ \text{Ric}(e_3,e_3)=-\frac{3}{2},\ \text{Ric}(e_4,e_4)=-\frac{3}{2},\\
&\text{Ric}(e_1,e_2)=\text{Ric}(e_1,e_3)=\text{Ric}(e_1,e_4)=\text{Ric}(e_2,e_3)=\text{Ric}(e_2,e_4)=\text{Ric}(e_3,e_4)=0.
 \end{align*}
In particular, $\text{Ric}(e_i,e_j)\le0$. Furthermore, 
\[\text{Ric}(e_i,e_j)=-\frac{3}{2}\langle e_i,e_j\rangle\]
holds. Thus, the SNC-algebra gives an Einstein manifold.

From the above discussion, the following is satisfied.
\begin{thm}
For the following SNC-algebras of dimension four, the corresponding solvable Lie groups with left invariant metric are Einstein manifolds. 
\begin{enumerate}
\item[(1)]  $ [e_4,e_1]=e_1,\ [e_4,e_2] =e_2,\   [e_4,e_3]=e_3,$ others are zero,
\item[(2)]  $ [e_4,e_1]=e_1+\beta e_2,\ [e_4,e_2] =-\beta e_1+e_2,\   [e_4,e_3]=e_3,\ 0<\ \beta$, others are zero.
\item[(3)]   $[e_1,e_2]=e_3,\ [e_4,e_1]=\frac{1}{2}e_1,\ [e_4,e_2] =\frac{1}{2}e_2,\   [e_4,e_3]=e_3$, others are zero.
\item[(4)]  $[e_1,e_2]=e_3,\ [e_4,e_1]=\frac{1}{2}e_1+\alpha e_2,\ [e_4,e_2] =-\alpha e_1+\frac{1}{2}e_2,\   [e_4,e_3]=e_3,\ 0<\ \alpha$,\ others are zero.
\end{enumerate}
\end{thm}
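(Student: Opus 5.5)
The statement collects the Einstein cases found in the case-by-case Ricci computations of Section 5.2. The plan is to use those computations, together with formula (5.3), and to check for each listed algebra that $\mathrm{Ric}(e_i,e_j)=c\langle e_i,e_j\rangle$ in the chosen orthonormal basis for some constant $c<0$. The work then splits into four cases, one for each item. Item (1) is Theorem 3.7(1) with $x=y=1$, item (2) is Theorem 3.7(4) with $\alpha=1$, item (3) is Theorem 3.13(1) with $x=\frac12$, and item (4) is Theorem 3.13(3).

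For item (1), substitute $x=y=1$ into the formulas of the Theorem 3.7(1) case. This gives $\mathrm{Ric}(e_i,e_i)=-3$ for every $i$, and all off-diagonal entries vanish. Alternatively, one can note that this is Milnor's example (Remark 3.8), for which $K=-\|\ell\|^2=-1$, so the Ricci tensor is $-(n-1)=-3$ times the metric.

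For item (2), substitute $\alpha=1$ into the formulas of the Theorem 3.7(4) case. This gives $-2\alpha^2-\alpha=-3$, $-2\alpha-1=-3$ and $-2\alpha^2-1=-3$, and again the off-diagonal entries vanish. Here I will also check that the entries do not depend on $\beta$, which is plausible because $\beta$ enters $\mathrm{ad}\,e_4$ only through its skew-symmetric part. Explicitly, I will recompute $U(e_i,e_j)$ from (5.2) and confirm that in (5.3) the $\beta$-terms cancel in pairs, namely $\langle U(e_i,Y),U(e_i,X)\rangle$ against $-\frac34\langle[e_i,X],[e_i,Y]\rangle$ together with the double-bracket terms.

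For item (3), take $x=\frac12$ in the Theorem 3.13(1) case. This gives $-2(1-x)-\frac12=-\frac32$, $-2x-\frac12=-\frac32$, $\mathrm{Ric}(e_3,e_3)=-\frac32$ and $\mathrm{Ric}(e_4,e_4)=-(\frac14+\frac14+1)=-\frac32$. One subtlety is that Theorem 3.13 only allows $0<x<\frac12$, and $x=\frac12$ is exactly where $D_{\mathfrak m}$ becomes scalar. I will check directly with Lemma 3.9 that $D=\mathrm{diag}(\frac12,\frac12,1)$ lies in $\delta^+(\mathfrak n)$, so that the algebra in item (3) is a genuine SNC-algebra; it is the $\alpha=0$ limit of type (3).

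For item (4), the Theorem 3.13(3) case already gives $\mathrm{Ric}=-\frac32\,\mathrm{id}$ for every $\alpha>0$.

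The main obstacle is confirming that the listed Ricci values in the cases involving skew rotation parameters, items (2) and (4), really are independent of those parameters. I will verify this by separating $\mathrm{ad}\,e_4|_{\mathfrak n}=S+K$ into its symmetric part $S$ and skew part $K$, where $K$ is a derivation of $\mathfrak n$ here. Substituting into (5.3), the $K$-contributions to $\mathrm{Ric}$ take the form $\frac12[S,K]$-type terms plus $-\frac14K^2+\frac14K^2$, and these vanish because $S$ is scalar on the rotated block. The conclusion is then the standard fact that adding a skew derivation commuting with $S$ does not change the metric. That fact would also explain why each of (2) and (4) is Einstein for all values of its parameter.
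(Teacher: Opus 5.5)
Your proposal is correct and is essentially the paper's argument. The theorem is just the list of Einstein cases read off from the Section 5.2 computations of $U(e_i,e_j)$ via (5.2) and of $\mathrm{Ric}$ via (5.3): Theorem 3.7(1) at $x=y=1$ and Theorem 3.7(4) at $\alpha=1$ give $\mathrm{Ric}=-3\langle\ ,\ \rangle$, while Theorem 3.13(1) at $x=\frac12$ and Theorem 3.13(3) for every $\alpha>0$ give $\mathrm{Ric}=-\frac32\langle\ ,\ \rangle$.

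Your supplementary checks are sound. The paper does tacitly extend Theorem 3.13(1) to $x=\frac12$ in Section 5.2. The commuting skew-derivation argument is also right: $\frac12[D,D^t]=[K,S]=0$ is the only place $K$ could enter $\mathrm{Ric}|_{\mathfrak n}$, and $\mathrm{Ric}(e_4,e_4)=-\mathrm{tr}(S^2)$, which explains why items (2) and (4) are isometric to items (1) and (3). One wording fix: ``does not change the metric'' should read ``yields an isometric metric''.
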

\section{Four-dimensional SNC-algebras equippeded with the structures of symmetric spaces}
In this section, we analyze the details of  four and five-dimensional SNC-algebras which admit structures of symmetric spaces. First, we will introduce a theorem that shows the necessary and sufficient conditions for being the symmetric space of Heintze. Second, we analyze the cases that become symmetric spaces in the four-dimensional SNC-algebra.
\subsection{Theorem of Heintze}

The following theorem was demonstrated by Heintze\cite{h}.
\begin{thm}[Heintze\cite{h}]
Let $\mathfrak{g}$ be a solvable Lie algebra with inner product, suth that $K<0$. Then the following conditions are mutually equivalent. 
\begin{enumerate}
\item[(1)]$\nabla R=0$.
\item[(2)]
\begin{enumerate}
\item[a)]$\mathfrak{g}={A_0}+\mathfrak{a}_1+\mathfrak{a}_2$\ is an orthogonal decomposition with  $\mathfrak{g}'=\mathfrak{a}_1+\mathfrak{a}_2,\ [\mathfrak{g}',\mathfrak{g}']=\mathfrak{a}_2$\ and $[\mathfrak{g}',\mathfrak{a}_2]=0$,
\item[b)]if $\mathrm{ad}A|\mathfrak{g}'=D_0+S_0$\ is decomposed into its symmetric and skew-symmetric psrt,\ then $D_0|\mathfrak{a}_i=i\cdot \lambda\cdot id\ $for\ $i=1,2,$and $S_0$\ is a skew-symmetric derivation of $\mathfrak{g}'$,
\item[c)]if $Z_1,...,Z_t$ form an orthonormal basis of $\mathfrak{a}_2$ and if the skew-symmetric maps $J_i:\mathfrak{a}_1\to\mathfrak{a}_1, i=1,...,t, $are defined by
\begin{center}
$[X,Y]=2\lambda\sum_{i=1}^t\langle X,J_iY\rangle Z_i$ for all $X,Y\in\mathfrak{a}_1$
\end{center}
then:
\begin{enumerate}
\item[$\alpha$)]${J_i}^2=-id$(i.e. $J_i$ orthogonal),
\item[$\beta$)]$J_iJ_k=-J_kJ_i$ for $i\neq k$,
\item[$\gamma$)]$J_iJ_kX\in\{J_1X,...,J_tX\}$ for all $X\in \mathfrak{a}_1$ and $i\neq k$.
\end{enumerate}
\end{enumerate}
\end{enumerate}
\end{thm}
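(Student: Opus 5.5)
The statement is Heintze's characterization of when a negatively curved solvable metric Lie algebra is locally symmetric, so my plan is to follow Heintze's structure. Throughout, $\nabla$ and $R$ are computed from (5.1)–(5.2).

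\textbf{Step 1: (2) implies (1).} This is the easier direction. Given the orthogonal decomposition $\mathfrak{g}=\mathbb{R}A_0+\mathfrak{a}_1+\mathfrak{a}_2$ with $D_0|\mathfrak{a}_i=i\lambda\,\mathrm{id}$, I first note that the skew part $S_0$ is a skew-symmetric derivation. Since the Levi-Civita connection is natural, $\mathrm{ad}A$ may therefore be replaced by $D_0$ without changing the curvature up to isometry (Heintze's modification argument). Next, condition c) says that $\mathfrak{a}_1+\mathfrak{a}_2$ with the maps $J_i$ is a Heisenberg-type algebra satisfying the $J^2$-condition $\gamma$). Up to the scaling $\lambda$, the resulting metric Lie algebras are exactly the Iwasawa $\mathfrak{a}+\mathfrak{n}$ of the rank-one symmetric spaces:
\begin{itemize}
\item $t=0$ gives real hyperbolic space;
\item $t=1$ gives complex hyperbolic space;
\item $t=3$ gives quaternionic hyperbolic space;
\item $t=7$ with $\dim\mathfrak{a}_1=8$ gives the Cayley plane.
\end{itemize}
Using the $\gamma$) condition to pin down which Clifford modules occur, I conclude that the simply connected group is isometric to one of these spaces. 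Hence $\nabla R=0$.

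\textbf{Step 2: (1) implies (2).} A homogeneous space with $\nabla R=0$ and $K<0$ is a simply connected symmetric space of rank one, so it is one of the four hyperbolic spaces listed above. By Heintze's Theorem~\ref{thm:h3} and Theorem~\ref{thm:douti}, $\mathfrak{g}=\mathfrak{n}(D)$ with $\mathfrak{g}'=\mathfrak{n}$ nilpotent. I compare $\mathfrak{g}$ with the Iwasawa algebra $\mathfrak{s}$ of the same symmetric space; both are simply transitive solvable isometry algebras of that space. By the result (Wolf, Heintze) that such solvable subalgebras of the isometry algebra differ only by compact abelian parts, $\mathfrak{g}$ is obtained from $\mathfrak{s}$ by adding a skew-symmetric derivation $S_0$ to $\mathrm{ad}A$. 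The nilradicals then coincide as metric Lie algebras. Therefore $\mathfrak{g}'$ inherits the grading $\mathfrak{a}_1+\mathfrak{a}_2$, the symmetric part $D_0$ acts by $\lambda$ and $2\lambda$, and the $J_i$ satisfy $\alpha$)–$\gamma$).

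\textbf{Main obstacle.} The hard point is the identification in Step 2, namely showing that an arbitrary negatively curved solvable algebra acting simply transitively on a rank-one symmetric space is a modification of the Iwasawa algebra. If I wanted to avoid citing that, I would argue directly from $\nabla R=0$:
\begin{enumerate}
\item Take $\mathfrak{a}_i$ to be the eigenspaces of the symmetric part of $\mathrm{ad}A$, using that $R(A,\cdot)A$ has eigenvalues in ratio $1:4$.
\item Compute with the formula for $U$.
\end{enumerate}
The $1:4$ ratio comes from the classification of rank-one Jacobi operators. The derivation of $\alpha$)–$\gamma$) from parallelism would be the lengthiest calculation.
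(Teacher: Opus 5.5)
The paper gives no proof of this theorem. It is quoted from Heintze, Section 6 uses only criterion (2) as a computational test on the classified algebras, and the identification of the resulting spaces is taken separately from Heintze's Proposition 4. So there is no in-paper argument to match, and your outline has to stand on its own.

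As a reduction to standard structure theory it is sound, and it takes a classification route. For (2)$\Rightarrow$(1) you use the Cowling--Dooley--Kor\'anyi--Ricci theorem: an $H$-type algebra satisfies the $J^2$-condition iff it is the Iwasawa $\mathfrak{n}$ of a rank-one space. Here $\alpha$)--$\beta$) are the $H$-type condition up to the factor $2\lambda$, and $\gamma$) is the $J^2$-condition, since $J_ZJ_{Z'}=\sum_{i\neq k}z_iz'_kJ_iJ_k$ for $Z\perp Z'$. For (1)$\Rightarrow$(2) you use that a simply transitive solvable isometry algebra of a rank-one space is a modification of its Iwasawa algebra $\mathfrak{a}+\mathfrak{n}$.

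What this buys is that the identification with $\mathbb{R}H^n,\mathbb{C}H^n,\mathbb{H}H^n,\mathbb{O}H^2$, which the paper imports separately, falls out of your proof. What it costs is reliance on results deeper than the statement; the $J^2$-classification postdates Heintze's paper. So ``using $\gamma$) to pin down the Clifford modules'' should be presented as that citation, not as a check you carry out.

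Three steps need an argument rather than a label.

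\emph{The modification in (2)$\Rightarrow$(1).} Replacing $\mathrm{ad}A$ by $D_0$ does not follow from naturality of $\nabla$ alone. Extend $S_0$ to $\mathfrak{g}$ by $S_0A=0$ and form $\mathbb{R}S_0\ltimes\mathfrak{g}$; inside it, $\mathfrak{g}$ and $\mathbb{R}(A-S_0)\oplus\mathfrak{g}'$ both act simply transitively and isometrically on $G$. This requires $[S_0,D_0]=0$. That does hold under b), because the derivation $S_0$ preserves $\mathfrak{a}_2=[\mathfrak{g}',\mathfrak{g}']$ and, being skew, also preserves $\mathfrak{a}_1$.

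\emph{The conjugacy claim in (1)$\Rightarrow$(2).} This should be proved, not attributed to ``Wolf, Heintze''. A connected solvable group acting transitively fixes a point of $\partial M$, so $\mathfrak{g}\subset\mathfrak{m}+\mathfrak{a}+\mathfrak{n}$. Its image in $\mathfrak{m}\oplus\mathfrak{a}$ is abelian, because solvable subalgebras of the compact $\mathfrak{m}$ are abelian; hence $\mathfrak{g}'\subset\mathfrak{n}$. Then $\dim\mathfrak{g}'=\dim\mathfrak{g}-1$ (Theorem \ref{thm:h3}) forces $\mathfrak{g}'=\mathfrak{n}$, so $\mathfrak{g}=\mathbb{R}(A+S)\oplus\mathfrak{n}$ with $S\in\mathfrak{m}$. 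Since the Killing field of $S$ vanishes at the base point, $A+S\perp\mathfrak{n}$ and $\mathfrak{n}$ carries the symmetric metric. You still owe the restricted-root computation showing that the Iwasawa algebra itself satisfies (2) with the factor $2\lambda$, taking $\mathfrak{a}_1,\mathfrak{a}_2$ to be the root spaces for the roots $\lambda$ and $2\lambda$ of $\mathrm{ad}A$.

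\emph{The fallback direct sketch.} For unit $A\perp\mathfrak{g}'$ and $X\in\mathfrak{g}'$ one has $R(X,A)A=-(D_0^2+[D_0,S_0])X$. So the $1:4$ spectrum of the Jacobi operator determines the eigenvalues of $D_0$ only after you have extracted $[D_0,S_0]=0$ from $\nabla R=0$.
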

In particular, in the case of symmetric spaces, it is sufficient to satisfy condition (1), so we can examine which of the four-dimensional SNC-algebras classified above correspond to symmetric spaces.
\subsection{Case of the derived algebra is abelian}
Let $\mathfrak{g}$ be an SNC-algebra of dimension four with the abelian derived algebra. From the assumption, $A_0=e_4,\ \mathfrak{a}_1=\{e_1,e_2,e_3\},\ \mathfrak{a}_2=0,\ \mathfrak{g'}=\{e_1,e_2,e_3\}$. Therefore $\mathfrak{g}$ satisfies the conditions of Theorem 6.1(2)a). Also, since $\mathfrak{a}_2=0$, it is obvious to satisfy the condition of c). Therefore, it is sufficient to check whether only condition b) is satisfied. Under the assumption, there are the following two cases that satisfy condition b).

Case of Theorem 3.7(1) and the following is satisfied.
\[[e_4,e_1]=e_1,\ [e_4,e_2] =e_2,\   [e_4,e_3]=e_3.\tag{6.1}\] 

Case of Theorem 3.7(4) and the following is satisfied.
\[  [e_4,e_1]=e_1+\beta e_2,\ [e_4,e_2] =-\beta e_1+e_2,\ [e_4,e_3]=e_3,\ 0<\beta. \tag{6.2}\]
In the case of (6.1), $\mathrm{ad}e_4|\mathfrak{g}'=D_0$, and if we take $\lambda=1$, then $D_0=id$. Therefore, $\mathfrak{g}$ satisfies the condition of b). In the case of (6.2), $\mathrm{ad}e_4|\mathfrak{g}'=D_0+S_0$, and if we take $\lambda=1$, then $D_0=id$. Therefore, $\mathfrak{g}$ satisfies the condition of b). 
\subsection{Case of the derived algebra is non-abelian}
Let $\mathfrak{g}$ be an SNC-algebra of dimension four with the non-abelian derived algebra. From the assumption, $A_0=e_4,\ \mathfrak{a}_1=\{e_1,e_2\},\ \mathfrak{a}_2=e_3,\ \mathfrak{g'}=\mathfrak{a}_1+\mathfrak{a}_2,\ [\mathfrak{g'},\mathfrak{a}_2]=0$. Therefore, $\mathfrak{g}$ satisfies the conditions of Theorem 6.1(2)a). Under the assumption, there are the following two cases that satisfy condition b).

Case of Theorem 3.13(1) and the following is satisfied.
\[[e_4,e_1]=\frac{1}{2}e_1,\ [e_4,e_2] =\frac{1}{2}e_2,\   [e_4,e_3]=e_3\tag{6.3}\] 

Case of Theorem 3.13(3) and the following is satisfied.
\[  [e_4,e_1]=\frac{1}{2}e_1+\beta e_2,\ [e_4,e_2] =-\beta e_1+\frac{1}{2}e_2,\ [e_4,e_3]=e_3,\ 0<\beta \tag{6.4}\]

In the case of (6.3), $\mathrm{ad}e_4|\mathfrak{g}'=D_0$, and if we take$\lambda=\frac{1}{2}$, then $D_0|\mathfrak{a}_1=\frac{1}{2}id,\ D_0|\mathfrak{a}_2=2\cdot \frac{1}{2}id$. Therefore, $\mathfrak{g}$ satisfies the condition of b). In the case of (6.4), $\mathrm{ad}e_4|\mathfrak{g}'=D_0+S_0$, and if we take $\lambda=\frac{1}{2}$, then $D_0|\mathfrak{a}_1=\frac{1}{2}id,\ D_0|\mathfrak{a}_2=2\cdot \frac{1}{2}id$. Therefore, $\mathfrak{g}$ satisfies the condition of b). 

Finally, confirm that condition c) is satisfied. Since $\mathfrak{a}_1=\{e_1,e_2\},\ \mathfrak{a}_2=e_3$, the skew-symmetric map $J:\mathfrak{a}_1\to \mathfrak{a}_1$ is defined so that its representation matrix with respect to the basis $e_1, e_2$ satisfies the following.
\[
\left(
  \begin{array}{cc}
    0 &1 \\
   -1&0
   \end{array}
\right)
\]
Since $t=1$, it is trivial that $\mathfrak{g}$ satisfies $\beta)$ and $\gamma)$, and since $J^2=-id$, $\mathfrak{g}$ satisfies $\alpha)$. Therefore, (6.3) and (6.4) are symmetric spaces. Therefore, the following holds.
\begin{thm}
When the orthonormal basis $e_1,e_2,e_3,e_4$ of the four-dimensional SNC-algebra is defined to satisfy one of the following, it becomes a symmetric space.
\begin{enumerate}
\item[(1)]
$ [e_4,e_1]=e_1,\ [e_4,e_2] =e_2,\ [e_4,e_3] =e_3,\ $others are zero.
\item[(2)]
$ [e_4,e_1]=e_1+\beta e_2,\ [e_4,e_2] =-\beta e_1+e_2,\ [e_4,e_3]=e_3,\ 0<\beta$, others are zero.
\item[(3)]
$[e_4,e_1]=\frac{1}{2}e_1,\ [e_4,e_2] =\frac{1}{2}e_2,\   [e_4,e_3]=e_3, [e_1,e_2]=e_3$, others are zero.
\item[(4)]
$[e_4,e_1]=\frac{1}{2}e_1+\beta e_2,\ [e_4,e_2] =-\beta e_1+\frac{1}{2}e_2,\ [e_4,e_3]=e_3,\ 0<\beta$, others are zero.
\end{enumerate}
\end{thm}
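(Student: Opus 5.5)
The plan is to apply Heintze's theorem (Theorem 6.1) to each of the four listed algebras. We check condition (2)a)–c) directly, and then conclude $\nabla R=0$; since the metric is left invariant, hence complete and homogeneous, this gives a Riemannian symmetric space. Negative curvature of these inner products, which is needed to enter Theorem 6.1, follows from Theorem \ref{thm:h3}. In each case $e_4$ is orthogonal to $\mathfrak{g}'$ and $\mathrm{ad}\,e_4|_{\mathfrak{g}'}$ has eigenvalues with positive real part, namely $1$, $1\pm i\beta$, $\frac12$, and $\frac12\pm i\beta$ respectively. The statement is the union of the abelian analysis of \S6.2 and the non-abelian analysis of \S6.3, so the proof assembles those two subsections.

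For (1) and (2) the derived algebra is abelian, so we take $A_0=\mathbb{R}e_4$, $\mathfrak{a}_1=\mathrm{span}\{e_1,e_2,e_3\}$ and $\mathfrak{a}_2=0$. Condition a) is immediate, and condition c) is vacuous because $t=0$. For b) we write $\mathrm{ad}\,e_4|_{\mathfrak{g}'}=D_0+S_0$ with $\lambda=1$. In case (1), $S_0=0$ and $D_0=\mathrm{id}$. In case (2), $D_0=\mathrm{id}$ and $S_0$ is the rotation generator $e_1\mapsto\beta e_2$, $e_2\mapsto-\beta e_1$, $e_3\mapsto 0$. This $S_0$ is skew-symmetric with respect to the orthonormal basis, and it is trivially a derivation because $\mathfrak{g}'$ is abelian.

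For (3) and (4), with $[e_1,e_2]=e_3$, we take $\mathfrak{a}_1=\mathrm{span}\{e_1,e_2\}$ and $\mathfrak{a}_2=\mathbb{R}e_3$. Then $[\mathfrak{g}',\mathfrak{g}']=\mathfrak{a}_2$ and $[\mathfrak{g}',\mathfrak{a}_2]=0$, which gives a). For b) we choose $\lambda=\frac12$, so that $D_0|_{\mathfrak{a}_1}=\frac12\mathrm{id}$ and $D_0|_{\mathfrak{a}_2}=\mathrm{id}=2\lambda\,\mathrm{id}$. In case (4) we must also check that $S_0$ (with $S_0e_1=\beta e_2$, $S_0e_2=-\beta e_1$, $S_0e_3=0$) is a derivation of the Heisenberg algebra. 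This holds because $[S_0e_1,e_2]+[e_1,S_0e_2]=0=S_0e_3$; equivalently, $\mathrm{tr}(S_0|_{\mathfrak{a}_1})=0$, as Lemma \ref{lem:n3} requires.

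For c), with $Z_1=e_3$ and $t=1$, the defining relation $[e_1,e_2]=2\lambda\langle e_1,Je_2\rangle e_3=\langle e_1,Je_2\rangle e_3$ forces $J$ to have matrix $\begin{pmatrix}0&1\\-1&0\end{pmatrix}$. Then $J^2=-\mathrm{id}$ gives $\alpha)$, while $\beta)$ and $\gamma)$ are vacuous for $t=1$.

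The main point of care is the normalization in c). The scale $\lambda$ is fixed by b), so the coefficient $2\lambda=1$ must agree exactly with the structure constant $[e_1,e_2]=e_3$ in the orthonormal basis; otherwise $J^2\neq-\mathrm{id}$. This is exactly why the symmetric members of Theorem 3.13 are the parameter values $x=\frac12$ and the rotation family, and why (1)(2) of Theorem 3.7 need all real parts equal to $1$. With these checks, Theorem 6.1 gives $\nabla R=0$ in all four cases.
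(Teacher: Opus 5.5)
Your check of conditions a)--c) is the paper's argument in \S6.2--6.3: the same decompositions, $\lambda=1$ resp.\ $\lambda=\frac12$, and the same $J$. Your extra verifications, that $S_0$ is a derivation and that $2\lambda=1$ matches $[e_1,e_2]=e_3$, are correct. You also rightly read (4) as including $[e_1,e_2]=e_3$, as in (6.4). Taken literally (``others are zero''), (4) would be the $\alpha=\frac12$ member of Theorem~\ref{thm:ab}(4), which by \S5 is not Einstein and so cannot be symmetric.

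The one step that does not hold is your justification of the hypothesis $K<0$ needed to apply Theorem 6.1. Theorem~\ref{thm:h3} is an existence statement. Its condition (2) gives that $\mathfrak{g}$ admits \emph{some} inner product of negative curvature. It does not say that the particular inner product making $e_1,\dots,e_4$ orthonormal has $K<0$.

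The data ``$e_4\perp\mathfrak{g}'$ and $\mathrm{ad}\,e_4|_{\mathfrak{g}'}$ has eigenvalues with positive real parts'' do not control the sign of curvature for a given metric. Take orthonormal $e_1,\dots,e_4$ with $[e_4,e_1]=e_1$, $[e_4,e_2]=e_2$, $[e_4,e_3]=ce_2+e_3$, all other brackets zero. Every eigenvalue of $\mathrm{ad}\,e_4|_{\mathfrak{g}'}$ equals $1$, yet Milnor's curvature formula gives $K(e_2,e_4)=\frac{c^2}{4}-1>0$ for $c>2$.

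So you must verify $K<0$ for these four metrics directly. Case (1) is Milnor's example from \S2, with $K\equiv-1$. The curvature tensors computed in \S6.5 give constant curvature $-1$ for (2), and the $\mathbb{C}H^2$ tensor, pinched between $-1$ and $-\frac14$, for (3) and (4). The paper applies Theorem 6.1 without checking this hypothesis at all. Once you replace the appeal to Theorem~\ref{thm:h3} by this direct check, your proof is the paper's proof with its missing step filled in.
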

From Proposition 4 of Heintze\cite{h}, the symmetric spaces of Theorem6.2.(1) and (2) are precisely the hyperbolic spaces $\mathbb{R}H^4$. Moreover, the symmetric spaces of Theorem6.2.(3) and (4) are precisely the hyperbolic spaces $\mathbb{C}H^2$. 
\subsection{Case of the five-dimensional SNC-algebra}
First, five-dimensional SNC-algebra of type (A) can be classified in the same way as in the four-dimensional case. Second, the orthonormal basis $e_1,e_2,e_3,e_4,e_5$ of the five-dimensional SNC-algebra $\mathfrak{g}$ of type (B), the following is satisfied.
\[ \mathfrak{g}=\{e_5\}+\mathfrak{a}_1+\mathfrak{a}_2,\ \mathfrak{a}_1=\{e_1, e_2\},\ \mathfrak{a}_2=\{e_3, e_4\}.\]
From the above, in the case of (B), condition of Theorem 6.1(2)a) is not satisfied. Last, the orthonormal basis $e_1,e_2,e_3,e_4,e_5$ of the five-dimensional SNC-algebra $\mathfrak{g}$ of type (C), the following is satisfied.
\[ \mathfrak{g}=\{e_5\}+\mathfrak{a}_1+\mathfrak{a}_2,\ \mathfrak{a}_1=\{e_1, e_2,e_3\},\ \mathfrak{a}_2=\{e_4\}.\]
There is no object in the five-dimensional SNC-algebra of type (C) that satisfies the condition (2) c). Therefore, the following holds.
\begin{thm}
When the orthonormal basis $e_1,e_2,e_3,e_4,e_5$ of the five-dimensional SNC-algebra is defined to satisfy one of the following, it becomes a symmetric space.
\begin{enumerate}
\item[(1)]
    $ [e_5,e_1]=e_1,\ [e_5,e_2] =e_2,\ [e_5,e_3] =e_3,\ [e_5,e_4]=e_4,\ $others are zero.
\item[(2)]
 $ [e_5,e_1]=e_1,\ [e_5,e_2] =e_2,\ [e_5,e_3] =e_3+\beta e_4,\ [e_5,e_4]=-\beta e_3+e_4, 0<\beta,\ $others are zero.
\item[(3)]
$ [e_5,e_1]=e_1+\beta e_2,\ [e_5,e_2] =-\beta e_1+e_2,\ [e_5,e_3] =e_3+\beta' e_4,\ [e_5,e_4]=-\beta' e_3+e_4,\ 0<\beta, \beta',\ $others are zero.
\end{enumerate}
\end{thm}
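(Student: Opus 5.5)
The plan is to apply Heintze's criterion (Theorem 6.1) to each of the three families of five-dimensional SNC-algebras from Section 4. We fix the orthonormal basis $e_1,\dots,e_5$ with $A=e_5$ orthogonal to $\mathfrak{g}'=\mathfrak{n}$, and check conditions (2)a), b), c) one type at a time. Since a symmetric space is exactly $\nabla R=0$, the theorem amounts to two claims: the three listed algebras satisfy (2), and no other algebra in Theorems 4.2, 4.7 and 4.22 does.

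\emph{Type (A).} Here $\mathfrak{n}$ is abelian, so $[\mathfrak{g}',\mathfrak{g}']=0$. Condition a) then forces $\mathfrak{a}_2=0$ and $\mathfrak{a}_1=\mathfrak{n}$, and c) is vacuous. Condition b) requires the symmetric part $D_0$ of $\mathrm{ad}e_5|_{\mathfrak{n}}$ to be $\lambda\,\mathrm{id}$, and the skew part $S_0$ to be an arbitrary skew map (every endomorphism is a derivation of an abelian algebra). We go through the nine cases of Theorem 4.2 with $\lambda$ normalized to $1$.
\begin{itemize}
\item Case (1) needs $x_1=x_2=x_3=1$, giving item (1).
\item Case (3) needs $y_1=y_2=1$, giving item (2).
\item Case (7) needs $\alpha=1$, giving item (3).
\item The cases containing a Jordan block $\begin{pmatrix}1&1\\0&1\end{pmatrix}$, namely (2), (4), (5), (6), (8) and (9), have a symmetric part with nonzero off-diagonal entries $\tfrac12$, so they are excluded.
\end{itemize}
Case (9) deserves care, because its off-diagonal coupling block is the identity and hence contributes a symmetric part. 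For the other Jordan-type cases we compute $D_0=\tfrac12(D+{}^tD)$ and observe that it is not scalar.

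\emph{Type (B).} Here $[\mathfrak{n},\mathfrak{n}]=\mathrm{span}\{e_3,e_4\}$, so a) forces $\mathfrak{a}_2=\mathrm{span}\{e_3,e_4\}$. But $[e_1,e_3]=e_4\neq0$, so $[\mathfrak{g}',\mathfrak{a}_2]\neq0$ and a) fails for every algebra in Theorem 4.7. This is immediate.

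\emph{Type (C).} Here $[\mathfrak{n},\mathfrak{n}]=\mathbb{R}e_4$, so $\mathfrak{a}_2=\mathbb{R}e_4$ and $\mathfrak{a}_1=\mathrm{span}\{e_1,e_2,e_3\}$; condition a) itself holds, since $e_4$ is central. The obstruction is c). With $t=1$ we have $[X,Y]=2\lambda\langle X,J_1Y\rangle e_4$, so $J_1$ is a skew-symmetric map of the three-dimensional space $\mathfrak{a}_1$. Any skew map of an odd-dimensional space is singular; in fact $J_1e_3=0$ because $e_3$ is central in $\mathfrak{n}$. Hence $J_1^2=-\mathrm{id}$ is impossible and c)$\alpha$) fails, ruling out all thirteen algebras of Theorem 4.22 regardless of b).

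The main obstacle is making the type (C) exclusion rigorous independently of the chosen basis. Heintze's decomposition is determined by $\mathfrak{g}'$ and $[\mathfrak{g}',\mathfrak{g}']$ together with the inner product, and $\dim\mathfrak{a}_1=3$ is intrinsic, so the parity argument does hold for every inner product. A secondary point is stating cleanly why the Jordan-type cases of type (A) fail for the given orthonormal basis. If we want the statement up to isometry rather than for the fixed basis, we would instead use that b) forces $\mathrm{ad}A|_{\mathfrak{n}}$ to be normal with all eigenvalues of real part $\lambda$, which excludes any nontrivial Jordan block.

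Finally, we verify that items (1) to (3) indeed satisfy a) through c), with $\lambda=1$, $S_0$ the rotation part, and c) vacuous.
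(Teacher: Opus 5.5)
Your proposal is correct and follows essentially the same route as the paper. Both apply Heintze's criterion (Theorem 6.1) type by type: type (A) is reduced to condition b), which singles out $x_1=x_2=x_3=1$, $y_1=y_2=1$, $\alpha=1$ and rules out every Jordan-type case; type (B) fails a) because $[e_1,e_3]=e_4\neq 0$ gives $[\mathfrak{g}',\mathfrak{a}_2]\neq 0$; and type (C) fails c). Your parity argument for (C) (a skew-symmetric $J_1$ on the three-dimensional $\mathfrak{a}_1$ is singular, so $J_1^2=-\mathrm{id}$ cannot hold) and your normality remark for (A) supply justifications the paper only asserts, and they make the exclusions basis-independent; note only that, as in the paper, the exclusion direction tacitly assumes the standing hypothesis $K<0$ of Theorem 6.1 for the metric under consideration.
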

From Proposition 4 of Heintze\cite{h}, the symmetric spaces of Theorem6.3.(1), (2) and (3) are precisely the hyperbolic spaces $\mathbb{R}H^5$.
\subsection{Detailed analysis as a symmetric space}
From Example 2.1, it can be seen that the symmetric space given in Theorem6.2.(1) has constant curvature of $-1$. Next, we consider the symmetric space given in Theorem6.2.(2). From (5.1) and (5.2), the Riemann curvature tensor is given as follows.
\begin{align*}
 R(X,Y)Z&=\nabla_X\nabla_YZ-\nabla_Y\nabla_XZ-\nabla_{[X,Y]}Z\\
 &=\nabla_X(U(Y,Z)+\frac{1}{2}[Y,Z])-\nabla_Y(U(X,Z)+\frac{1}{2}[X,Z])\\
 &\ \ \ \ -(U([X,Y],Z)+\frac{1}{2}[[X,Y],Z])\\
 &=U(X,U(Y,Z))+\frac{1}{2}[X,U(Y,Z)]+\frac{1}{2}U(X,[Y,Z])+\frac{1}{4}[X,[Y,Z]]\\
 &\ \ \ \ -U(Y,U(X,Z))-\frac{1}{2}[Y,U(X,Z)]-\frac{1}{2}U(Y,[X,Z])-\frac{1}{4}[Y,[X,Z]]\\
 &\ \ \ \ -U([X,Y],Z)-\frac{1}{2}[[X,Y],Z].
 \end{align*}
Especially, $R(X,X)Y=0, R(X,Y)Z=-R(Y,X)Z$.

Riemann curvature tensors in the orthonormal basis $e_1, e_2, e_3, e_4$ that satisfies Theorem6.2.(2) is given by the following.
\begin{align*}
 R(e_1,e_2)e_3=R(e_2,e_3)e_1=R(e_3,e_1)e_2=0.\\
 R(e_1,e_2)e_4=R(e_2,e_4)e_1=R(e_4,e_1)e_2=0.\\
 R(e_1,e_4)e_3=R(e_4,e_3)e_1=R(e_3,e_1)e_4=0.\\
 R(e_4,e_2)e_3=R(e_2,e_3)e_4=R(e_3,e_4)e_2=0.\\
 R(e_1,e_2)e_1=e_2, R(e_1,e_3)e_1=e_3, R(e_1,e_4)e_1=e_4.\\
 R(e_2,e_1)e_2=e_1, R(e_2,e_3)e_2=e_3, R(e_2,e_4)e_2=e_4.\\
 R(e_3,e_1)e_3=e_1, R(e_3,e_2)e_3=e_2, R(e_3,e_4)e_3=e_4.\\
 R(e_4,e_1)e_4=e_1, R(e_4,e_2)e_4=e_2, R(e_4,e_3)e_4=e_3.
 \end{align*}
In particular, since $ R(X,Y)Z=-\{\langle Y,Z\rangle X-\langle X,Z\rangle Y\}$ is satisfied, it becomes a symmetric space of constant curvature $-1$. Moreover, symmetric space given in Theorem6.2.(1) and (2) are not isomorphic to each other as Lie algebras but their curvatures with respective to the uniquely determined orthnormal basis coincide.

Next, we consider the symmetric space given in Theorem6.2.(3) and (4). Rieman curvature tensors in the orthonormal basis $e_1, e_2, e_3, e_4$ that satisfies Theorem6.2.(3) is given by the following.
\begin{align*}
 R(e_1,e_2)e_1=e_2, R(e_1,e_3)e_1=\frac{1}{4}e_3, R(e_1,e_4)e_1=\frac{1}{4}e_4.\\
 R(e_2,e_1)e_2=e_1, R(e_2,e_3)e_2=\frac{1}{4}e_3, R(e_2,e_4)e_2=\frac{1}{4}e_4.\\
 R(e_3,e_1)e_3=\frac{1}{4}e_1, R(e_3,e_2)e_3=\frac{1}{4}e_2, R(e_3,e_4)e_3=e_4.\\
 R(e_4,e_1)e_4=\frac{1}{4}e_1, R(e_4,e_2)e_4=\frac{1}{4}e_2, R(e_4,e_3)e_4=e_4.\\
 R(e_1,e_2)e_3=-\frac{1}{2}e_4, R(e_2,e_3)e_1=\frac{1}{4}e_4, R(e_3,e_1)e_2=\frac{1}{4}e_4.\\
 R(e_1,e_2)e_4=\frac{1}{2}e_3, R(e_2,e_4)e_1=-\frac{1}{4}e_3, R(e_4,e_1)e_2=-\frac{1}{4}e_3.\\
 R(e_1,e_3)e_4=\frac{1}{4}e_2, R(e_3,e_4)e_1=-\frac{1}{2}e_2, R(e_4,e_1)e_3=-\frac{1}{4}e_2.\\
 R(e_2,e_3)e_4=-\frac{1}{4}e_1, R(e_3,e_4)e_2=\frac{1}{2}e_1, R(e_4,e_2)e_3=-\frac{1}{4}e_1.\\
 \end{align*}
Riemann curvature tensors in the orthonormal basis $e_1, e_2, e_3, e_4$ that satisfies Theorem6.2.(4) is given by the following.
\begin{align*}
 R(e_1,e_2)e_1=e_2, R(e_1,e_3)e_1=\frac{1}{4}e_3, R(e_1,e_4)e_1=\frac{1}{4}e_4.\\
 R(e_2,e_1)e_2=e_1, R(e_2,e_3)e_2=\frac{1}{4}e_3, R(e_2,e_4)e_2=\frac{1}{4}e_4.\\
 R(e_3,e_1)e_3=\frac{1}{4}e_1, R(e_3,e_2)e_3=\frac{1}{4}e_2, R(e_3,e_4)e_3=e_4.\\
 R(e_4,e_1)e_4=\frac{1}{4}e_1, R(e_4,e_2)e_4=\frac{1}{4}e_2, R(e_4,e_3)e_4=e_4.\\
 R(e_1,e_2)e_3=-\frac{1}{2}e_4, R(e_2,e_3)e_1=\frac{1}{4}e_4, R(e_3,e_1)e_2=\frac{1}{4}e_4.\\
 R(e_1,e_2)e_4=\frac{1}{2}e_3, R(e_2,e_4)e_1=-\frac{1}{4}e_3, R(e_4,e_1)e_2=-\frac{1}{4}e_3.\\
 R(e_1,e_3)e_4=\frac{1}{4}e_2, R(e_3,e_4)e_1=-\frac{1}{2}e_2, R(e_4,e_1)e_3=-\frac{1}{4}e_2.\\
 R(e_2,e_3)e_4=-\frac{1}{4}e_1, R(e_3,e_4)e_2=\frac{1}{2}e_1, R(e_4,e_2)e_3=-\frac{1}{4}e_1.\\
 \end{align*}
From this, the symmetric spaces given in Theorem6.2.(3) and (4) do not have constant curvature. Moreover, symmetric space given in Theorem6.2.(3) and (4) are not isomorphic to each other as Lie algebras but their curvatures with respective to the uniquely determined orthnormal basis coincide.
\\

\section*{Acknowledgements}
The author would like to express sincere gratitude to all those who contributed to this research. In particular, the author is deeply grateful to Professor Makiko Sumi Tanaka for her guidance and support throughout this study.

\end{document}